\newcommand\largediamond{\mbox{\large$\diamondsuit$}}
\newcommand{\w}{\omega}
\newcommand{\mr}{\mathrm}
\newcommand{\mc}{\mathcal}
\newcommand{\bsigma}{\mathbf{\mathop{\pmb{\sum}}}^-}
\newcommand{\mathnot}{{\sim}}
\newcommand\forces{\Vdash}
\newcommand\val{\mathop{val}}
\newcommand\dom{\operatorname{dom}}
\def\aweebitback{\mskip-1.5\thinmuskip}
\def\supy#1{\aweebitback\vphantom{\vec{y}}^{#1}}
\newtheorem{theorem}{Theorem}[section]
\newtheorem{lem}[theorem]{Lemma}
\newtheorem{lemma}[theorem]{Lemma}
\newtheorem{corollary}[theorem]{Corollary}
\newtheorem{proposition}[theorem]{Proposition}
\newtheorem{defn}[theorem]{Definition}
\newtheorem{definition}[theorem]{Definition}
\newtheorem{claim}{Claim}
\newsavebox{\Prfref}
\newsavebox{\prfref}
\begin{document}

\title[Hereditarily Normal Manifolds of Dimension $>1$]{Hereditarily
  Normal Manifolds of Dimension~$>1$ May All Be Metrizable} 
\author{Alan Dow{$^1$} and Franklin D. Tall{$^2$}}

\footnotetext[1]{Research supported by NSF grant  DMS-1501506.}
\footnotetext[2]{Research supported by NSERC grant A-7354.\vspace*{2pt}}
\date{\today}
\maketitle

\begin{abstract}
  P.\ J.\ Nyikos has asked whether it is consistent that every
  hereditarily normal manifold of dimension $>1$ is metrizable, and
  proved it is if one assumes the consistency of a supercompact
  cardinal, and, in addition, that the manifolds are hereditarily
  collectionwise Hausdorff. We are able to omit these extra
  assumptions.  
\end{abstract}

\renewcommand{\thefootnote}{} \footnote
{\parbox[1.8em]{\linewidth}{$2000$ Math.\ Subj.\ Class.\ Primary
    54A35, 54D15, 54D45, 54E35, 03E05, 03E35, 03E65; Secondary 54D20,
    03E55.}\vspace*{5pt}} \renewcommand{\thefootnote}{} \footnote
{\parbox[1.8em]{\linewidth}{Key words and phrases: hereditarily
    normal, manifold, metrizable, coherent Souslin tree, proper
    forcing, $\mr{PFA}(S)[S]$, locally compact, $P$-ideal, perfect
    pre-image of $\w_1$, sequentially compact.}}

\section{Nyikos' Manifold Problem}

For us, a \emph{manifold} is simply a  locally Euclidean
topological space.  Mary Ellen Rudin proved that
$\mr{MA}+\mathnot\mr{CH}$ implies every perfectly normal manifold is
metrizable~\cite{Rudin1979}. Hereditary normality ($T_5$) is a natural
weakening of perfect normality; Peter Nyikos noticed that, although
the Long Line and Long Ray are hereditarily normal non-metrizable
manifolds, and indeed the only $1$-dimensional non-metrizable
connected manifolds \cite{Nyikos1993}, it is difficult to find examples of
dimension $>1$ (although one can do so with
$\largediamond$~\cite{Rudin1979} or $\mr{CH}$~\cite{RudinZenor}). He
therefore raised the problem of whether it was consistent that there
weren't any \cite{Nyikos1983}, \cite{Nyikos1993}. In a series of
papers~\cite{Nyikos2002,Nyikos2003,Nyikos2004a,Nyikos2004} he was
finally able to prove this from the consistency of a supercompact
cardinal, if he also assumed that the manifolds were hereditarily
collectionwise Hausdorff. We will demonstrate that neither of
these extra assumptions is  necessary:

\begin{theorem}
It is 
  consistent that every hereditarily normal manifold of dimension $>1$
  is metrizable.
\end{theorem}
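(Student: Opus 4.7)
The plan is to work in a model of $\mathrm{PFA}(S)[S]$, obtained by forcing PFA over a ground model containing a coherent Souslin tree $S$ while preserving $S$, and then forcing with $S$ itself. Such models are well-studied and yield many consequences of strong forcing axioms --- Moore's OCA, the $P$-ideal dichotomy, reflection principles, and most pertinently Larson--Tall style theorems that promote normality to collectionwise Hausdorffness under first countability or local compactness --- while remaining compatible with combinatorics (such as coherent Souslin trees) destroyed by $\mathrm{MA}+\mathnot\mathrm{CH}$.

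The strategy is to reduce the problem to Nyikos' prior theorem by showing that his extra hypothesis --- hereditary collectionwise Hausdorffness --- is automatic for hereditarily normal manifolds in this model. A manifold $M$ is first countable, so every subspace of $M$ is first countable, and hereditary normality makes every subspace normal. The relevant $\mathrm{PFA}(S)[S]$ consequence --- that normal first countable spaces of suitable size are collectionwise Hausdorff --- then promotes this, on subspaces of cardinality $\omega_1$, to hereditary collectionwise Hausdorffness. Combined with a reflection argument showing that failure of metrizability for such a manifold reflects to a subspace of size $\omega_1$, one concludes that $M$ is hereditarily collectionwise Hausdorff and Nyikos' theorem then applies to yield metrizability.

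The main obstacle is the reflection step: one must argue that if a hereditarily normal manifold of dimension $>1$ is non-metrizable in $\mathrm{PFA}(S)[S]$, then the non-metrizability is witnessed on a subspace of cardinality $\omega_1$ to which both the collectionwise Hausdorff promotion and Nyikos' arguments apply. This uses the structure theory of manifolds (local separability, local compactness, local metrizability) together with the reflection principles known to hold in $\mathrm{PFA}(S)[S]$, and is likely where a sequentially compact perfect pre-image of $\omega_1$ or a closed copy of $\omega_1$ will need to be extracted via the $P$-ideal dichotomy. Once reflection is in place, the reduction to Nyikos is essentially formal, but matching the hypotheses of the cited normal-implies-cwH theorems with the situation arising inside arbitrary subspaces of $M$ --- which need not themselves be locally compact --- will require careful bookkeeping, and this is where I expect the main technical work of the proof to lie.
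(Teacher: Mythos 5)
Your proposal correctly identifies the ambient model ($\mathrm{PFA}(S)[S]$), the role of $\mathbf{CW}$ (normal first countable spaces are $\aleph_1$-collectionwise Hausdorff) in exploiting hereditary normality plus first countability, and the relevance of the $P$-ideal dichotomy and of extracting copies of $\omega_1$ from sequentially compact sets. But the core strategy --- derive hereditary collectionwise Hausdorffness and then ``apply Nyikos' theorem'' --- has a genuine gap. Nyikos' result is not a ZFC implication of the form ``hereditarily normal $+$ hereditarily cwH $\Rightarrow$ metrizable''; it is a consistency result established in a particular model built from a supercompact, using its own package of axioms. To invoke it inside a $\mathrm{PFA}(S)[S]$ model you would have to re-verify every axiom his four papers use there, and the hardest of these is precisely the point where this paper does new work: the statement $\mathbf{PPI}^+$ (every sequentially compact non-compact regular space contains an uncountable free sequence, and, if first countable, a copy of $\omega_1$), whose derivation from $\mathrm{PFA}(S)$ occupies all of \S 3 and is the paper's main technical contribution. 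Your ``reflection step,'' which you acknowledge is where the work lies, is exactly this missing content, and the object it must produce is not a small submanifold to which a metrization theorem applies (an $\omega_1$-sized subspace of a manifold is not a manifold), so the reduction cannot close even in outline.

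The paper's actual argument is self-contained and structurally different. It first kills separable non-Lindel\"of (Type II) behaviour via $\mathfrak b>\omega_1$, $\mathbf{P}_{22}$ and $\mathbf{PPI}^+$ (Lemma \ref{metric}), then uses countable elementary submodels to represent the manifold as a Type I chain $\{Y_\alpha:\alpha\in\omega_1\}$ of open Lindel\"of sets whose boundaries have only non-degenerate components --- this is where $\dim>1$ enters, since a point cannot be a cut point of a connected open set of dimension greater than one (Lemma \ref{elementary}). It then threads a copy of $\omega_1$ through the boundaries $\partial Y_\gamma$ using $\mathbf{P}_{22}$ and $\mathbf{PPI}^+$, selects auxiliary points in the non-trivial boundary components, and builds from a Urysohn function a Tychonoff-plank-like subspace that is not normal, contradicting hereditary normality directly; $\mathbf{CW}$ is used only to show certain $\omega_1$-indexed point sets cannot be discrete, not to feed an external metrization theorem. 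Finally, \S 4 shows the whole axiom group $\mathbf{GA}$ is consistent from ZFC alone via an $\aleph_2$-p.i.c.\ iteration with elementary-matrix side conditions, so the paper's Theorem 1.1 carries no large cardinal hypothesis --- something the route through Nyikos' supercompact-based theorem could not deliver.
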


 For a coherent Souslin tree $S$ (see \S 2)
PFA($S$) is the statement \cite[\S 4]{Todorcevic}:
 If $\mathcal P$ is a proper poset that preserves $S$ 
 and if $\mathcal D_\alpha (\alpha < \omega_1)$  is a sequence of
dense open subsets of $\mathcal P$ there is a filter $\mathcal
G\subset \mathcal P$ such that $\mathcal G\cap \mathcal D_\alpha \neq
\emptyset $ for all $\alpha <\omega_1$.  The notation PFA(S)[S] is
adopted in \cite{Larson} to abbreviate that we are in a forcing
extension by $S$ of a model in which $S$ was a coherent Souslin tree
and in which PFA(S) held.

\begin{theorem}
It is a consequence of PFA(S)[S] that every hereditarily normal 
manifold of dimension greater than 1 is metrizable.
\end{theorem}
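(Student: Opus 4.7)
The plan is to argue by contradiction: assume $M$ is a hereditarily normal non-metrizable manifold of dimension $>1$ in the $\mr{PFA}(S)[S]$ extension. Since $M$ is locally Euclidean, it is locally compact, locally second countable, and locally metrizable. For such a space, metrizability on each component is equivalent to the Lindel\"of property, so one component $M_0$ of $M$ must fail to be Lindel\"of. Replacing $M$ by $M_0$, I would next look for a canonical closed non-Lindel\"of subspace. The natural candidate, suggested by the keyword list of the paper, is a closed copy of $\w_1$ or, more generally, a closed perfect pre-image of $\w_1$: in a locally compact setting, such a subspace can typically be extracted from any non-Lindel\"of behavior via an elementary submodel chain argument.

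Next I would bring in the axiomatic content of $\mr{PFA}(S)[S]$. The pattern of proof established by Todor\v{c}evi\'c and Larson is that many consequences of $\mr{PFA}$ (such as the $P$-ideal dichotomy and strong reflection for locally compact hereditarily normal spaces) can be shown to survive the forcing by the coherent Souslin tree $S$. The strategy would be to apply PID to a suitable $P$-ideal of countable subsets of the candidate closed subspace $Y \subseteq M$ (for instance, the ideal of countable subsets of $Y$ which are relatively compact, or which converge in an appropriate sense). The dichotomy delivers either an uncountable subset on which the ideal is trivial, producing an uncountable free sequence incompatible with the separation axiom $T_5$ tested on a well-chosen pair of subspaces of $Y$, or a covering of $Y$ by countably many orthogonal pieces, which would force $Y$ to be $\sigma$-compact and hence Lindel\"of, contradicting the choice of $Y$.

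The dimension $>1$ hypothesis enters at the geometric step: it rules out the one-dimensional obstructions (the Long Line and Long Ray), which are genuinely hereditarily normal perfect pre-images of $\w_1$. In dimension $\geq 2$ one has, around each point of a copy of $\w_1$, a disk of dimension $\geq 2$ rather than an interval, and this extra room allows the construction of disjoint sets $A, B$ inside a subspace whose closures cannot be separated by open sets, violating hereditary normality. This is where the manifold hypothesis does real work beyond local compactness and local metrizability.

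The main obstacle is the removal of Nyikos' hereditary collectionwise Hausdorff assumption. That hypothesis was used to tame uncountable discrete configurations at limit points when trying to separate things in an $\w_1$-indexed construction. Without it, one must replace the direct use of hcwH by the $\mr{PFA}(S)[S]$-reflection together with sequential compactness arguments (another keyword in the paper): one needs to show that the relevant closed subspaces of $M$ are themselves sequentially compact and that their sequential structure forces enough ``collectionwise'' behavior to carry out the separation argument. Verifying this dichotomy in the presence of only hereditary normality is the heart of the matter and, I expect, the step that consumes most of the paper.
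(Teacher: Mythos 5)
Your outline captures the general flavor of the paper's argument (reduction to a Type I representation, a P-ideal dichotomy, copies of $\omega_1$, and a Tychonoff-plank-style violation of $T_5$), but there are three concrete gaps that prevent it from closing. The most serious is your assertion that a closed copy of $\omega_1$ ``can typically be extracted from any non-Lindel\"of behavior via an elementary submodel chain argument.'' An elementary chain only yields a perfect pre-image of $\omega_1$ (the increasing sequence of open Lindel\"of sets $Y_\alpha$ with their boundaries $\partial Y_\alpha$); upgrading this to an actual copy of $\omega_1$ is precisely the content of $\mathbf{PPI}^+$ (every sequentially compact non-compact first countable regular space contains a copy of $\omega_1$), which is the paper's main new technical consequence of PFA(S)[S] and occupies two full sections, requiring a new $S$-preserving proper poset built from a symmetric filter of $S$-sequentially closed subsets of a product structure. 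Treating this as a routine extraction hides the heart of the proof. Relatedly, your reading of the dichotomy has the two alternatives playing the wrong roles: the alternative in which a stationary set is orthogonal to the ideal does not force $\sigma$-compactness or Lindel\"ofness of $Y$ --- it shows that the closure of the corresponding point set is \emph{sequentially compact} (every countable subset meets some $\overline{U_\beta}$ infinitely, hence has a limit point in the compact set $\overline{U_\beta}$), and this sequential compactness is exactly the hypothesis fed into $\mathbf{PPI}^+$. The other alternative produces an uncountable discrete set, which is ruled out by $\mathbf{CW}$ (normal first countable spaces are $\aleph_1$-collectionwise Hausdorff, a known consequence of forcing with a Souslin tree) together with a pressing-down argument, not directly by hereditary normality.

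The third gap concerns where $\dim > 1$ enters. It is not merely that a disk gives ``extra room'': the precise mechanism is the lemma that in a connected set of dimension $> 1$ no point is a cut point, from which it follows that every component of the boundary $\partial X(M)$, for $M$ a countable elementary submodel, is non-trivial. These non-trivial compact connected pieces $K_\gamma \subseteq \partial Y_\gamma$ are indispensable: they allow one to choose, next to each point $w_\gamma$ of the copy of $\omega_1$, auxiliary points $y_\gamma, z_\gamma \in K_\gamma$ with $f[K_\gamma] = [0,1]$ for an Urysohn function $f$ separating $W$ from the $y_\gamma$'s, and it is from these data that the non-normal subspace is assembled. Without a statement of this form your ``disjoint sets $A$, $B$ whose closures cannot be separated'' have no construction behind them.
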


We will isolate some known (quotable) consequences of PFA(S)[S].
The first, rather easy,
 is that the bounding number $\mathfrak b$ is greater than
$\omega_1$  \cite{Larson1999}. The next is the important P-ideal
dichotomy.  

\begin{defn}
  A collection $\mc{I}$ of countable subsets of a set $X$ is a
  \textrm{\bf $\mathbf{P}$-ideal} if each subset of a member of
  $\mc{I}$ is in $\mc{I}$, finite unions of members of $\mc{I}$ are in
  $\mc{I}$, and whenever $\{ I_n : n\in\w\}\subseteq\mc{I}$, there is
  a $J\in\mc{I}$ such that $I_n - J$ is finite for all
  $n$. 
\end{defn}

\begin{quotation}
  \textit{\textbf{$\mathbf{PID}$ is the statement:}
 For every
$P$-ideal  $\mc{I}$ of countable subsets of some uncountable
set $A$  either
\begin{enumerate}
\item[(i)] there is an uncountable $B\subset A$ such that
  $[B]^{\aleph_0}\subset \mc{I}$, or else
\item [(ii)] the set $A$ can be decomposed into countably many 
sets, $\{ B_n : n\in \omega\}$, such that 
 $[B_n]\cap \mc{I}=\emptyset$ for each $n\in \omega$.
\end{enumerate}
}
\end{quotation}

The consistency of $\mathbf{PID}$ does have large cardinal strength
but for P-ideals on $\omega_1$ it does not --
see  the discussion at the bottom of page 6 in
\cite{Todorcevic}. A statement similar to the $\mathbf{PID}$ for
ideals on $\omega_1$ is the one we need;
it also does not have large cardinal strength and is
weaker than the $\omega_1$ version of the statement
in  \cite[6.2]{Todorcevic}. The statement $\mathbf{P}_{22}$
was introduced in \cite{EisworthNyikos}. 
 For completeness, and to introduce the ideas we will need
for another consequence of PFA(S)[S], we include a proof
in \S 2 that  it is a consequence of PFA(S)[S].

\begin{quotation}
\textbf{$\mathbf{P_{22}}$ is the statement:\/}
  Suppose $\mc{I}$ is a $P$-ideal on a stationary subset $B$ of
  $\w_1$. Then either
\begin{enumerate}
\item[(i)]there is a stationary $E\subseteq B$ such that every
  countable subset of $E$ is in $\mc{I}$,
\item[or (ii)]there is a stationary $D\subseteq B$ such that
 $[D]^{\aleph_0}\cap \mc{I}$ is empty.
\end{enumerate}
\end{quotation}

A space $X$ is said to be $\aleph_1$-collectionwise Hausdorff 
if the points of any closed discrete subset of cardinality at most
$\aleph_1$ can be surrounded by pairwise disjoint open sets
(separated). If a separable
 space is hereditarily $\aleph_1$-collectionwise
Hausdorff,  then it can have no uncountable discrete subsets (known as
having countable spread).

The next consequence of PFA(S)[S] is:

\begin{quotation}
  \textit{\textbf{$\mathbf{CW}$:} Normal, first countable spaces are
    $\aleph_1$-collectionwise Hausdorff.}
\end{quotation}

$\mathbf{CW}$ 
was first shown to be consistent in~\cite{Tall1977};
it   was  derived
from $V=L$ in~\cite{Fleissner1974}, 
and was shown to be a consequence of 
 PFA(S)[S]  
 in~\cite{Larson}.  In fact, it is shown in \cite{Larson}
 that simply forcing with any Souslin tree will 
 produce a model of $\mathbf{CW}$. 
Let us note now that $\mathbf{CW}$ implies that any
hereditarily normal manifold   is hereditarily $\aleph_1$-collectionwise
Hausdorff. Therefore $\mathbf{CW}$ implies
that each separable hereditarily normal manifold has countable spread.

Our next axiom is our crucial new additional consequence of PFA(S)[S]:

\begin{quotation} \textit{$\mathbf{PPI}^+$: every sequentially compact
non-compact  regular space contains an uncountable free sequence.
Additionally, if the space is
 first countable, then it  contains a copy of the ordinal space
 $\omega_1$.}
\end{quotation}

Let $\mathbf{GA}$ denote the group (or conjunction)
of hypotheses:
 $\mathfrak b>\omega_1$, $\mathbf{CW}$,
  $\mathbf{PPI}^+$ and $\mathbf{P}_{22}$. We 
have, or show,
that each is a consequence of PFA(S)[S], and also establish
the desired theorem. We show in \S \ref{nolarge} that $\mathbf{GA}$ is
consistent (not requiring any large cardinals).
 
\begin{theorem} $\mathbf{GA}$ implies that all hereditarily normal
 manifolds of
  dimension greater than one are metrizable.
\end{theorem}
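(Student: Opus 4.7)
\begin{PROOF}
Suppose for contradiction that $M$ is a non-metrizable hereditarily
normal manifold of dimension $n\geq 2$.  By passing to a connected
component we may assume $M$ is connected, in which case metrizability
is equivalent to Lindel\"of-ness; so $M$ is non-Lindel\"of.  The plan is
to locate a closed copy of $\omega_1$ inside $M$, and then use the
extra dimension, together with the hereditary $\aleph_1$-collectionwise
Hausdorffness given by $\mathbf{CW}$, to construct a pair of disjoint
subsets that cannot be separated, contradicting hereditary normality.

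\emph{Step 1: produce a sequentially compact non-compact closed
subspace.} Using local compactness and non-Lindel\"of-ness I would
recursively build an increasing $\omega_1$-chain
$\{U_\alpha:\alpha<\omega_1\}$ of open $\sigma$-compact subsets of $M$
with $\overline{U_\alpha}\setminus\bigcup_{\beta<\alpha}U_\beta$
nonempty at each step.  On the resulting index set $B\subseteq\omega_1$
define the ideal $\mc{I}$ of countable $I\subseteq B$ for which
$\overline{\bigcup_{\alpha\in I}U_\alpha}$ is compact; the $P$-ideal
property is secured by $\mathfrak b>\omega_1$, which allows countable
collections of such indices to be dominated coherently.  Now apply
$\mathbf{P}_{22}$: alternative~(ii) would directly deliver a closed
countably compact (hence, by first countability of $M$, sequentially
compact) non-compact $Y\subseteq M$, while alternative~(i) would force
an ``absorbed'' stationary set whose combination with $\mathbf{CW}$
contradicts the chain construction by producing a closed discrete
uncountable subset in a separable piece of $M$.

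\emph{Step 2: invoke $\mathbf{PPI}^+$ and exploit dimension.}  The
subspace $Y$ is regular, first countable, sequentially compact and
non-compact, so $\mathbf{PPI}^+$ produces a closed topological copy
$L\cong\omega_1$ inside $Y\subseteq M$.  At each $\alpha\in L$ a
Euclidean chart $\phi_\alpha\colon V_\alpha\to\mathbb R^n$ with
$\phi_\alpha(\alpha)=0$ and $n\geq 2$ provides an arc $A_\alpha$
through $\alpha$ transverse to $L$; a pressing-down argument on $L$
selects these coherently on a club $C\subseteq L$, together with
sequences $(p_\alpha^k)_k\subseteq A_\alpha\setminus L$ converging to
$\alpha$.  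Setting
$Z=C\cup\{p_\alpha^k:\alpha\in C,\,k\in\omega\}\subseteq M$, one has
that $C$ and $Z\setminus C$ are disjoint closed subsets of $Z$; but
any open neighbourhood in $Z$ of $C$ must contain a tail of each
sequence $(p_\alpha^k)_k$, so the two sets cannot be separated by
disjoint open subsets of $Z$.  This contradicts the hereditary
normality of $M$.

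The main obstacle is clearly Step~1.  Without a strong reflection
principle, extracting a closed sequentially compact non-compact
subspace from the bare hypothesis that $M$ is non-Lindel\"of is
non-trivial, and requires all three of $\mathfrak b>\omega_1$ (to
establish that $\mc{I}$ is a $P$-ideal), $\mathbf{P}_{22}$ (to select
the correct alternative for $\mc{I}$), and $\mathbf{CW}$ (to rule out
the trivial alternative).  By contrast, Step~2 is essentially
classical manifold-chart geometry combined with a Fodor-style pressing
down, and should present no serious difficulty once the closed copy of
$\omega_1$ is in hand.
\end{PROOF}
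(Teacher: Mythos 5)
Your Step~2 contains a fatal error. If $p_\alpha^k\to\alpha$ for each $\alpha\in C$, then in the subspace $Z=C\cup\{p_\alpha^k:\alpha\in C,\ k\in\omega\}$ the set $Z\setminus C$ is \emph{not} closed: its closure contains every $\alpha\in C$. So you have not produced two disjoint closed sets that cannot be separated; you have produced one closed set and one set whose closure meets it, which says nothing against normality. A decisive sanity check: your Step~2 never uses $n\geq 2$ in an essential way (a $1$-manifold also admits, at each point of an embedded copy of $\omega_1$, sequences converging to that point from outside the copy), so if the argument worked it would show the Long Line is not hereditarily normal --- but the Long Line is monotonically normal, hence hereditarily normal. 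The dimension hypothesis must enter through something stronger than ``there is an arc through each point.''

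What the paper actually does at this stage is considerably more delicate, and is where the real content lies. First, an elementary-submodel argument (their Lemma~\ref{elementary}) shows that for a suitable Type~I filtration $\{Y_\alpha\}$, every component of each boundary $\partial Y_\alpha$ is \emph{non-degenerate}; this is precisely where $\dim>1$ is used, via the fact that a point cannot be a cut-point of a connected open set of dimension $>1$. This yields infinite compact connected sets $K_\gamma\subseteq\partial Y_\gamma$ through the points $w_\gamma$ of the copy of $\omega_1$. One then chooses points $y_\gamma\in K_\gamma\setminus W$ whose closure is disjoint from $W$, takes a Urysohn function $f$ with $f[W]=\{1\}$ and $f(y_\gamma)=0$, and uses connectedness to get $f[K_\gamma]=[0,1]$; a further selection of points $z_\gamma\in K_\gamma$ on which $f$ is injective, plus a second application of the $P$-ideal/$\mathbf{P}_{22}$ machinery, produces level sets $Z_r$, $Z_{r_n}$ whose interaction along a club gives a genuine Tychonoff-plank configuration: two disjoint \emph{closed} subsets of $Z\setminus Z_r(C_\omega')$ (note the deleted ``corner'' set, the analogue of removing the corner point of the plank) that cannot be separated. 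Your proposal is missing the non-degenerate boundary components, the Urysohn function, and above all the deletion of a corner set that makes both halves of the inseparable pair closed. Step~1 of your proposal is roughly in the spirit of the paper (the combination $\mathfrak b>\omega_1$ for the $P$-ideal property, $\mathbf{P}_{22}$, $\mathbf{CW}$, and $\mathbf{PPI}^+$ is the right toolkit, though your specific ideal --- countable $I$ with $\overline{\bigcup_{\alpha\in I}U_\alpha}$ compact --- should be replaced by the local-finiteness ideal on a transversal of points as in the paper), but it also fails to set up the boundary structure that Step~2 genuinely requires.
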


We acknowledge some other historical connections.

The statement $\mathbf{PPI}^+$ is a strengthening of

\begin{quotation}
  \textit{$\mathbf{PPI}$: Every first countable perfect pre-image of
    $\w_1$ includes a copy of $\w_1$.}
\end{quotation}

$\mathbf{PPI}$ was proved from $\mathrm{PFA}$ by
Fremlin~\cite{Fremlin1988}, see also e.g.~\cite{Dow1992}. Another
consequence of PFA(S)[S] relevant to this proof is

\medskip

\begin{quotation}
  \textit{$\bsigma$: In a compact $T_2$, countably tight space,
    locally countable subspaces of size $\aleph_1$ are
    $\sigma$-discrete.}
\end{quotation}

$\bsigma$ was proved from $\mr{MA}+\mathnot\mr{CH}$ by
Balogh~\cite{Balogh1983}, extending work
of~\cite{Szentmiklossy1978}. 
 $\bsigma$ implies $\mathfrak  b > \omega_1$; 
 this follows from the   result in \cite[2.4]{ppit} 
 where it is shown that  $\mathfrak 
 b = \aleph_1$  implies there is a 
 compact hereditarily separable space which is not Lindelof,
             since $\bsigma$  implies there is no such space.  
$\bsigma$ was shown to be a consequence of PFA(S)[S] in 
  \cite{FTT}.


We will need the following consequence of $\mathbf{GA}$
which is a weaker statement
than $\bsigma$. The key fact that PFA(S)[S] implies
compact, separable, hereditarily normal spaces are
hereditarily Lindel\"of  was first proven
in \cite[10.6]{Todorcevic}.

\begin{lem}
$\mathbf{GA}$ implies\label{metric} that if $X$ is a hereditarily
normal  manifold\label{noTypeII} 
then separable subsets of $X$ are Lindelof and metrizable.
\end{lem}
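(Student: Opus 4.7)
\begin{iiproof}
The plan is to replace $Y$ by its closure $\overline{Y}$ in $X$; since $X$ is regular, $\overline{Y}$ is still separable, and as a closed subspace of the manifold $X$ it inherits local compactness, first countability, and hereditary normality. It suffices to show $\overline{Y}$ is Lindel\"of and metrizable, because then $Y$ inherits metrizability as a subspace and Lindel\"ofness via second countability. First I would invoke $\mathbf{CW}$: the first countable hereditarily normal $X$ is hereditarily $\aleph_1$-collectionwise Hausdorff, so $\overline{Y}$ is too, and by the argument already noted in the excerpt this combines with separability to force countable spread on $\overline{Y}$.

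The main step is Lindel\"ofness. Since $\overline{Y}$ is locally compact, this is equivalent to $\sigma$-compactness. Supposing this fails, a chain construction using local compactness should extract a strictly increasing $\omega_1$-chain of open subsets $\{U_\alpha : \alpha < \omega_1\}$ with each $\overline{U_\alpha}$ compact, $\overline{U_\alpha} \subseteq U_{\alpha+1}$, and witnesses $x_\alpha \in U_{\alpha+1} \setminus \overline{U_\alpha}$. On $\omega_1$ I would define the ideal $\mathcal I$ consisting of countable $A$ such that $\{x_\alpha : \alpha \in A\}$ has compact closure in $\overline{Y}$. I expect $\mathcal I$ to be a $P$-ideal via the chain structure: countably many $A_n \in \mathcal I$ have closures in respective $\overline{U_{\beta_n}}$, and $\beta = \sup_n\beta_n < \omega_1$ yields $\bigcup A_n \in \mathcal I$ because $\{x_\alpha\}_{\alpha \in \bigcup A_n} \subseteq \overline{U_\beta}$ is compact. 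The delicate matter is engineering the chain so compact closures are preserved at limit ordinals; this is the principal technical obstacle.

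Apply $\mathbf{P}_{22}$ to $\mathcal I$. Alternative (i) gives a stationary $E \subseteq \omega_1$ with $[E]^{\aleph_0} \subseteq \mathcal I$; set $W = \overline{\{x_\alpha : \alpha \in E\}}$. Then $W$ is countably compact: any countable $B \subseteq W$ arises, by first countability, as limits of countably many $x_\alpha$'s indexed by $E$, and that countable index set lies in $\mathcal I$, giving a compact closure inside $W$. Also $W$ is non-compact, since for each $\beta < \omega_1$ it contains $x_\alpha \notin U_\beta$ for cofinally many $\alpha \in E$. Hence $W$ is a regular, first countable, sequentially compact, non-compact space; by the first-countable clause of $\mathbf{PPI}^+$, $W$ contains a topological copy of $\omega_1$, whose successor ordinals form an uncountable discrete subspace, contradicting countable spread of $\overline{Y}$. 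Alternative (ii) yields a stationary $D$ with $[D]^{\aleph_0} \cap \mathcal I = \emptyset$; any accumulation point of $\{x_\alpha : \alpha \in D\}$ would, by first countability, provide a convergent subsequence and hence a countable subset of $D$ in $\mathcal I$, impossible. So $\{x_\alpha : \alpha \in D\}$ is an uncountable closed discrete subspace of $\overline{Y}$, again contradicting countable spread.

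Consequently $\overline{Y}$ must be $\sigma$-compact and hence paracompact; being locally metrizable as a subspace of the manifold $X$, Smirnov's metrization theorem makes $\overline{Y}$ metrizable, and separability upgrades this to second countability. The original $Y$ inherits both metrizability and Lindel\"ofness, completing the proof.
\end{iiproof}
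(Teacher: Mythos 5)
Your overall architecture (reduce to $\sigma$-compactness of $\overline{Y}$, build an ideal on $\omega_1$, apply $\mathbf{P}_{22}$, kill one alternative by countable spread via $\mathbf{CW}$ and the other via $\mathbf{PPI}^+$) matches the paper's, but the proof has a genuine gap at exactly the point you flag as ``the principal technical obstacle,'' and that obstacle is not a technicality one can defer. Your ideal $\mathcal I$ consists of the countable $A$ for which $\{x_\alpha:\alpha\in A\}$ has compact closure, and your verification that it is a $P$-ideal requires a nested $\omega_1$-chain of open sets with \emph{compact} closures at every stage, including limits. At a limit $\lambda$ the set $\overline{\bigcup_{\alpha<\lambda}U_\alpha}$ is the closure of an increasing union of countably many compacta, and in a locally compact space that is not $\omega$-bounded such a closure need not be compact (think of $\bigcup_n(-n,n)$ in $\mathbb{R}$); there is then no open $U_{\lambda+1}$ with compact closure containing it, and the construction halts. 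Worse, the problem is not merely with the chain: the compact-closure ideal itself can fail to be a $P$-ideal, since given $A_n\in\mathcal I$ one would need some countable $J$ with compact closure almost containing every $A_n$, i.e.\ an increasing union of compacta whose closure is again compact after deleting finitely many points from each piece --- and nothing in the hypotheses supplies this. So alternative (ii) of $\mathbf{P}_{22}$ is never legitimately reached.

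The paper sidesteps this by choosing a different ideal: it picks $y_\alpha\in Y\setminus\bigcup_{\beta<\alpha}U_\beta$ with $\overline{U_\alpha}$ compact (no nesting, no limit stages), and puts $a\in\mathcal I$ iff $\{y_\alpha:\alpha\in a\}\cap U_\beta$ is \emph{finite} for every $\beta$. The $P$-ideal property of this ideal is then proved from $\mathfrak b>\omega_1$ by a dominating-function diagonalization, and $\mathfrak b>\omega_1$ is used a second time to show the closure of $\{y_\beta:\beta\in B\}$ is sequentially compact in the relevant case. Notice that your argument never invokes $\mathfrak b>\omega_1$ at all, even though it is one of the four conjuncts of $\mathbf{GA}$; that is a strong hint that the hypothesis doing the work in the paper has been silently replaced by an unproved compactness assertion. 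A secondary, fixable point: your reason that $W=\overline{\{x_\alpha:\alpha\in E\}}$ is non-compact (it is not contained in any $U_\beta$) is insufficient unless the $U_\beta$ cover $W$; the correct argument, as in the paper's last lines, is that a compact $W$ would be covered by finitely many Euclidean charts and hence hereditarily Lindel\"of, contradicting the fact that $\{x_\alpha\}$ is an uncountable right-separated subspace.
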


\begin{proof}
Let $Y$ be any separable subset of $X$ and assume
that $Y$ is not Lindelof. Recursively choose points
 $y_\alpha$, together with open sets $U_\alpha$, so that
 $y_\alpha \in Y\setminus \bigcup_{\beta<\alpha} U_\beta$,
 $y_\alpha \in U_\alpha$, and $\overline{U_\alpha}$ is separable and
 compact. Define an ideal $\mc{I}$ 
of countable subset $a$ of $\omega_1$
 according to the property that $a\in \mc{I}$ providing
 $\{ y_\alpha : \alpha \in a\}\cap U_\beta$ is finite for all
 $\beta\in \omega_1$. Since  $\mathfrak b > \omega_1$
 we have that $\mc{I}$ is a P-ideal (see \cite[6.4]{Todorcevic}). To check
 this, assume that $\{ a_n : n \in \omega\}$ are pairwise disjoint
infinite members of $\mc{I}$. For each $n$, fix an enumerating
function 
 $e_n $ from $\omega$ onto $a_n$. For each $\beta\in \omega_1$,
 there is a function $f_\beta\in \omega^\omega$ so that, for each
 $n\in \omega$ and each $m > f_\beta(n)$, $y_{e_n(m)}\notin U_\beta$. 
Using $\mathfrak b>\omega_1$, there is an $f\in \omega^\omega$ such
that $f_\beta < ^* f$ for each $\beta\in \omega_1$. 
For each $n$, let $F_n = \{ e_n(m) : m<f(n)\}$.
It follows that 
 $a = \bigcup \{a_n \setminus F_n : n\in \omega\}$ meets each
 $U_\beta$ in a finite set.  Thus $a\in \mc{I}$ and mod finite
 contains $a_n$ for each $n$.

If $B$ is any subset of $\omega_1$ such that $[B]^{\aleph_0}\subset
 \mathcal I$, then $D= \{ y_\beta  : \beta \in B\}$ is discrete 
since $D\cap U_\beta$ is finite for each $\beta \in B$. By 
$\mathbf{P}_{22}$
we must then have that there is an uncountable $B\subset \omega_1$
satisfying 
that $[B]^{\aleph_0}\cap \mc{I}$ is empty. Now let $A$ be the closure
(in $X$) of $\{ y_\beta : \beta\in B\}$. We check that $A$ is
sequentially compact.   Let $\{ x_n  : n\in \omega\}$ be any infinite
subset of $A$, we show that there is a limit point in $A$. 
Since $X$ is first countable this shows that $A$ is sequentially
compact. 
If $\{x_n : n\in \omega\}\cap \{y_\beta : \beta\in B\}$ is infinite,
 let $b\in [B]^{\aleph_0}$ be chosen so that $\{x_n : n\in \omega\}
\supset \{ y_\alpha : \alpha \in b\}$. Since $b\notin \mc{I}$, there is
a $\beta\in \omega_1$ such that $\{y_\alpha : \alpha \in b\}\cap
U_\beta$ is infinite, and so has a limit point in the compact set
 $\overline{U_\beta}$. Otherwise, we may suppose that, for each $n$,
there is an infinite $a_n\subset B$ such that $\{y_\alpha : \alpha \in
a_n\}$ converges to $x_n$. Again, using that $\mathfrak b>\omega_1$, 
similar to the verification that $\mc{I}$ is a P-ideal,
 there must be a $\beta\in \omega_1$ such that $U_\beta \cap 
 \{y_\alpha : \alpha \in a_n\}$ is infinite for infinitely many $n$. 
For any such $\beta$, there are infinitely many $n$ with 
 $x_n\in \overline{U_\beta}$. It again follows that
 $\overline{U_\beta}$ contains a limit of the sequence
 $\{ x_n : n\in \omega\}$. 
To finish the proof, we apply $\mathbf{PPI}^+$ to conclude
that either $A$ is compact or it contains a copy of
 $\omega_1$. Since $\omega_1$ contains uncountable discrete sets and
 $Y$ is separable, we must have that $A$ is compact. However, the
 final contradiction is that $A$ is not hereditarily Lindelof and so
 it cannot be covered by finitely many Euclidean open subsets of $X$.
\end{proof}

The literature on non-metrizable manifolds has identified two main types
of non-Lindel\"of manifolds, literally called Type I and Type II. A manifold is
Type II  if it is separable and non-Lindel\"of.  Lemma \ref{noTypeII} shows
that there are no hereditarily normal Type II manifolds if $\mathbf{GA}$ holds.
A manifold is said to be Type I, e.g. the Long Line,
 if it can be written as an increasing
$\omega_1$-chain, 
$\{ Y_\alpha : \alpha\in \omega_1\}$, where each $Y_\alpha$ is 
Lindel\"of, open, and 
contains the closure of each $Y_\beta$ with $\beta<\alpha$. 
In this next definition, we use the set-theoretic notion of countable
elementary submodels to help make a more strategic choice of
a representation of our Type I manifolds.  For a cardinal $\theta$,
the notation $H(\theta)$
denotes the standard set-theoretic notion of the set of all sets
that are hereditarily of cardinality less than $\theta$. These
are commonly used as stand-ins for the entire set-theoretic
universe
to avoid issues with G\"odel's famous incompleteness
theorems in arguments and constructions using elementary submodels. 
We refer the reader to any advanced book on set-theory for information
about the properties of $H(\theta)$.

\begin{defn}
 Suppose that $X$ is a non-metrizable
 manifold with dimension $n$. Let $\mathcal B_X$ denote the collection
 of compact   subsets of  $X$ that are homeomorphic to the closed 
Euclidean
 $n$-ball $\mathbb B^n$. A family $\{ M_\alpha : \alpha\in \omega_1\}$
is an elementary chain for $X$ if there is a regular
cardinal $\theta$ with $\mathcal B_X\in H(\theta)$ so that for each
 $\alpha\in \omega_1$, $M_\alpha$ is a countable elementary submodel
 of $H(\theta)$ such that 
$\mathcal B$ and each $M_\beta$ (
 $\beta<\alpha$) are members of $M_\alpha$. The chain is said to be a
continuous chain if for each limit $\alpha\in \omega_1$,
 $M_\alpha = \bigcup_{\beta < \alpha } M_\beta$. 

Whenever $\{ M_\alpha
 : \alpha \in \omega_1\}$ is an elementary chain for $X$, 
let $X(M_\alpha)$ denote the union of the collection $\mathcal B_X
\cap M_\alpha$.  
\end{defn}

Here is the main reason for our preference to use elementary submodels
in this  proof. Again the main ideas are from \cite{Nyikos2004}, but the
proof using elementary submodels
is much simpler.  Throughout the paper the term 
\textit{
component\/} refers to
the
standard notion of  connected component.

\begin{lem} 
Suppose that $X$ is a non-metrizable hereditarily normal
 manifold of dimension $n>1$. Let $\theta$ be
a large enough regular
cardinal $\theta$ so that\label{elementary}
 $\mathcal B_X\in H(\theta)$
and let $M$ be  a countable elementary submodel
 of $H(\theta)$ such that 
$\mathcal B_X$ is a  member of $M$.
Then $X(M) = \bigcup (M\cap \mathcal B_X)$ is 
an open Lindelof subset of $X$ with the property 
that every
 component of the non-empty boundary,
 $\partial X(M)$, is non-trivial.
\end{lem}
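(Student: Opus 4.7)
My plan is to verify openness, Lindel\"ofness, non-emptiness of $\partial X(M)$, and non-triviality of its components in sequence. Without loss of generality I will assume $X$ is connected (otherwise I would replace $X$ by a non-metrizable connected component, which elementarity lets me take to belong to $M$).

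For openness, I fix $B\in M\cap\mc{B}_X$ and use the local Euclideanness of $X$ together with the compactness of $B$ to produce a finite $\mc{F}\subseteq\mc{B}_X$ with $B\subseteq\bigcup_{C\in\mc{F}}\op{int}(C)$. Since this is a first-order statement with parameters $B,\mc{B}_X\in M$, elementarity supplies such an $\mc{F}\in M$, each of whose members then lies in $M\cap\mc{B}_X$; hence $B$ sits inside an open subset of $X(M)$. Lindel\"ofness then comes for free, because $X(M)$ is a countable union of compact balls. For non-emptiness of the boundary, elementarity applied to ``$\mc{B}_X\neq\emptyset$'' gives $X(M)\neq\emptyset$; were $\partial X(M)$ empty, $X(M)$ would be clopen in the connected space $X$, hence equal to $X$, forcing $X$ to be a Lindel\"of and therefore metrizable manifold, contrary to hypothesis.

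The heart of the argument is the non-triviality claim, which I would attack by contradiction. Suppose some component of $\partial X(M)$ is a singleton $\{p\}$. Using hereditary normality of $X$ together with the local compactness of the manifold, I extract an open $V\ni p$ with $V\cap\partial X(M)=\{p\}$, shrinking so that $V$ is homeomorphic to $\mathbb{R}^n$. Since $n>1$, $V\setminus\{p\}$ is connected, and $X(M)\cap V$ --- relatively clopen in $V\setminus\{p\}$ and non-empty because $p\in\ov{X(M)}$ --- must coincide with $V\setminus\{p\}$. I then pick $B\in\mc{B}_X$ with $p\in\op{int}(B)\subseteq B\subseteq V$; the sphere $\partial B\subseteq X(M)$ is compact, so finitely many $C_1,\dots,C_k\in M\cap\mc{B}_X$ cover $\partial B$.

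The final upgrade --- and the main obstacle --- is to deduce $p\in X(M)$, contradicting $p\in\partial X(M)$. Here I intend to emulate Nyikos's Type~I argument: in his setting a would-be singleton component $\{p\}$ of $\partial Y_\alpha$ is ruled out because the Lindel\"of set $V\setminus\{p\}$ is absorbed into some $\ov{Y_\gamma}$ with $\gamma<\alpha$, so that $p\in\ov{Y_\gamma}\subseteq Y_\alpha$ against $p\notin Y_\alpha$. In the elementary-submodel reformulation the countable family $M\cap\mc{B}_X$ plays the role of the filtration $\{\ov{Y_\beta}:\beta<\alpha\}$, the dimension hypothesis $n>1$ enters through the connectedness of $V\setminus\{p\}$, and a ball in $M\cap\mc{B}_X$ whose interior contains $p$ should be produced by combining the finite cover $\{C_i\}$ of $\partial B$, the compactness of $B$, and an application of elementarity to a suitable first-order statement whose parameters all lie in $M$.
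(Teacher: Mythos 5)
The first three assertions (openness, Lindel\"ofness via $\sigma$-compactness, and non-emptiness of the boundary) are handled essentially as in the paper and are fine, modulo some care with the reduction to a connected component. The non-triviality argument, however, has a fatal gap at its very first step: from the assumption that the component of $p$ in $\partial X(M)$ is $\{p\}$ you ``extract an open $V\ni p$ with $V\cap\partial X(M)=\{p\}$.'' That says $p$ is an \emph{isolated} point of $\partial X(M)$, which does not follow from $\{p\}$ being a component: in $\{0\}\cup\{1/n:n\geq 1\}$ the component of $0$ is $\{0\}$ yet $0$ is not isolated, and nothing prevents $\partial X(M)$ from looking locally like a Cantor set. (Hereditary normality plus local compactness, which you invoke here, give nothing of the sort.) What compactness of $\partial X(M)\cap B$ does give, via the coincidence of components and quasi-components in compact Hausdorff spaces, is a relatively clopen $D\ni p$ disjoint from $\partial B$ --- but $D$ may contain many other boundary points, so $V\setminus D$ need not be connected and the step ``$X(M)\cap V$ is clopen in the connected set $V\setminus\{p\}$, hence equals it'' collapses. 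Everything downstream, including the final production of a ball of $M\cap\mathcal{B}_X$ whose interior contains $p$ (which you only gesture at in any case), rests on this false reduction.

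The paper's proof is long precisely because it cannot assume $p$ is isolated. It splits the compact set $\partial X(M)\cap B$ into clopen pieces $C$ (the components reaching $\partial B$) and $D\ni p$, chooses a neighbourhood $W$ of $D$ with $\overline{W}\subseteq\operatorname{int}B$, covers the compact set $\partial W\cap X(M)$ by finitely many balls from $M$, and argues that the component $E$ of $p$ in the complement is Lindel\"of but must leave $\overline{X(M)}$. The dimension hypothesis enters not through connectedness of a punctured ball but through the fact that $p$ cannot be a cut point of $E$, which forces a \emph{second} point of $E\cap\partial X(M)$; one then takes two disjoint open sets $O_1,O_2$ around two such boundary points, points $y_1,y_2\in X(M)\cap M$ in the corresponding components, and derives a contradiction because a path in $X\setminus K$ joining $y_1$ to $y_2$ can, by elementarity, be found inside $X(M)$, where $X(M)\cap(O_1\setminus K)$ is clopen. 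Some version of this two-point/path-connectivity mechanism is unavoidable; the isolated-point shortcut is not available.
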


\begin{proof}
  We let $\mathcal{B}_X$ denote the family of all homeomorphic copies
  of the closed unit ball of~$\mathbb{R}^n$ in~$X$.
  As $X$~is a manifold this family is such that whenever $O$ is open in~$X$
  and $x\in O$ there is a $B\in\mathcal{B}_X$ such that $x$ is in the interior
  of~$B$ and $B\subseteq O$.
  
 Let $Y$ denote the set $X(M)$. Since $Y$ is metrizable, and
 $X$ is not, $Y$ is a proper subset of $X$.
 Each member of $\mathcal B_X\cap M$ is separable and hence
 $B\cap M$ is dense in~$B$ whenever $B\in\mathcal B_X\cap M$;
 it follows that $Y\cap M$ is a dense subset of~$Y$. 

 We also note that $Y$ is open since if $B\in \mathcal B_X\cap M$,
 then $B$ is compact and so is contained in the interior of a finite union
 of members of $\mathcal B_X$. By elementarity,
 there is a such a finite set in $\mathcal B_X\cap M$. 
Similarly, if $\mathcal B'$ is a finite subset of $\mathcal B_X\cap M$, 
then, by elementarity, each Lindel\"of  component of 
 $X\setminus \mathcal B'$ that meets $Y\cap M$ will be
a subset of~$Y$. More precisely, if $C$ is such a component
and if $y\in C\cap M$, then $M$ will witness
that there is a countable collection of members 
of $\mathcal B_X$ that covers the component of $y$ in $X\setminus
 \mathcal B'$.
Also, we have that $X$ itself must have non-Lindel\"of 
 components since   Lindel\"of
subsets of any manifold  are  metrizable while $X$, being locally connected, is
the free union of its  components.  

Now we choose any $x$ in $\partial X(M) = \partial Y = 
\overline{Y} \setminus  Y$.
Take any $B\in \mathcal B_X$ with $x$ in its interior. 
We  assume, working towards a contradiction,
that the component of $x$ in $\partial Y$ is $\{x\}$.

Since $\partial Y\cap B$ is compact and $\{x\}$ is a 
component of $\partial Y\cap B$, we can split the latter set into two
relatively clopen sets $C$ and $D$, where $C$~is the union of all components
of $\partial Y\cap B$ that meet the boundary of~$B$ and $D$~is its complement.
For now we allow for the possibility that $C=\emptyset$ but $D$~is not empty
as it contains~$x$.
We choose $W$ with $D\subseteq W$ and such that $\overline {W}$
is contained in the interior of~$B$ and disjoint from~$C$.

Note that $W$ and $\overline{W}$ are Lindel\"of because $B$ is compact and
hereditarily Lindel\"of, being homeomorphic to the unit ball of~$\mathbb{R}^n$.
Since $\partial W$ and $\partial Y$ are disjoint the set
$\partial W\cap Y$ is closed and hence compact.
There is a finite subfamily~$\mathcal B_1$ of $M\cap \mathcal B_X$ whose
union contains $\partial W\cap Y$. 
The complement $W\setminus \bigcup\mathcal B_1$ is a neighbourhood of~$x$,
so it meets $Y\cap M$.
The component, $E$, of $x$ in this complement is Lindel\"of but not contained
in $Y$, therefore $E$ is not a component of $X\setminus \bigcup\mathcal B_1$
which implies that $E\setminus \overline{Y}$ is not empty.
Since $\dim E>1$, $x$ can not be a cut-point of $E$ and so it 
 follows that  $x$ is not the only point of $E\cap \partial Y
\subset W\cap \partial Y$.

This means that we can choose  disjoint open subsets of $W$,
say $O_1$ and $ O_2$, each also having compact non-empty intersection with
$\partial Y$ and whose boundaries miss~$\partial Y$.
Fix points $z_1\in O_1\cap \partial Y$ and $z_2\in O_2\cap \partial Y$.

Now $Y\cap(\partial O_1 \cup \partial O_2 )$ is compact and again can be
covered by some $K$ where $K$ is equal to a union of some finite
subfamily~$\mathcal B_2$ of~$M\cap \mathcal B_X$.
Also since $Y\cap (\partial O_1\cup \partial O_2)$ is disjoint
from the boundary of $B$, we can ensure that $K$ is disjoint
from the boundary of $B$. 
Since $K$ is a compact subset of $Y$, each  component of
$O_1\setminus K$ and $O_2\setminus K$  meets $Y\cap M$;
so choose points $y_1$ and $y_2$ in $Y\cap M$ that are in the components
in $O_1\setminus K$ and $O_2\setminus K$ of $z_1$ and $z_2$ respectively.
Let $C_1$ and $C_2$ be the  components in $X\setminus K$ of $ y_1$
and $ y_2$ respectively.
Neither component is contained in $Y$ and so neither is Lindel\"of.
Thus, neither is contained in $B$ and so they both meet the (arcwise)
connected boundary of~$B$.
Since  components of $B\setminus K$ are path-connected, there is a
path in $X\setminus K$ from $y_1$ to~$y_2 $.
By elementarity there is such a path in~$M$ and such a path would lie
completely within~$Y$
(the path is covered by a finite subfamily of~$\mathcal{B}_X$ and one such
family should be in~$M$).
However, $Y\cap (O_1\setminus K)$ is clopen in $Y$ so there is no path
in~$Y$ that connects~$y_1$ and~$y_2$.
This contradiction finishes the proof. 
\end{proof}

This next corollary is the representation as a Type I sub-manifold that we
require.
                                        
\begin{corollary}
Suppose that $X$ is a non-metrizable hereditarily normal manifold of
dimension greater than 1. Then there is an increasing chain
 $\{ Y_\alpha : \alpha \in \omega_1\}$  of open Lindelof subsets
satisfying that\label{component}
\begin{enumerate}
\item for each $\alpha$, the boundary $\partial Y_\alpha$ is non-empty
  and contained   in $Y_{\alpha+1}$, 
\item for each $\alpha$, each  component of $\partial
  Y_\alpha$ is non-trivial,
\item for limit $\alpha$, $Y_\alpha = \bigcup \{Y_\beta : \beta\in
  \alpha\}$.
\end{enumerate}
Additionally, the union $ \bigcup\{ Y_\alpha : \alpha\in
\omega_1\}$ is closed (and open) in $X$.
\end{corollary}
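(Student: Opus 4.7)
The plan is to build an $\omega_1$-length continuous elementary chain $\{M_\alpha : \alpha<\omega_1\}$ of countable elementary submodels of some sufficiently large $H(\theta)$, with $X,\mathcal B_X\in M_0$ and $M_\beta\in M_\alpha$ for $\beta<\alpha$, and to set $Y_\alpha := X(M_\alpha) = \bigcup(M_\alpha\cap\mathcal B_X)$. Lemma~\ref{elementary} then directly gives property~(2): each $Y_\alpha$ is open and Lindel\"of with non-empty boundary whose components are all non-trivial. Property~(3) is automatic from continuity of the chain, since at a limit $\alpha$ one has $M_\alpha\cap\mathcal B_X = \bigcup_{\beta<\alpha}(M_\beta\cap\mathcal B_X)$, so $Y_\alpha=\bigcup_{\beta<\alpha}Y_\beta$.

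The real content is property~(1). The key observation is that $Y_\alpha$, being a countable union of separable compact balls, is itself separable, and therefore so is $\overline{Y_\alpha}$; Lemma~\ref{metric} then promotes this to the assertion that $\overline{Y_\alpha}$ is Lindel\"of, hence covered by some countable subfamily of~$\mathcal B_X$. Since $M_\alpha,\mathcal B_X\in M_{\alpha+1}$, we have $Y_\alpha,\overline{Y_\alpha}\in M_{\alpha+1}$, and a standard elementarity argument supplies such a countable cover $\mathcal B'\subseteq \mathcal B_X$ with $\mathcal B'\in M_{\alpha+1}$. Because any countable element of a countable elementary submodel is a subset of that submodel, $\mathcal B'\subseteq M_{\alpha+1}\cap\mathcal B_X$, whence $\overline{Y_\alpha}\subseteq\bigcup\mathcal B'\subseteq Y_{\alpha+1}$ and in particular $\partial Y_\alpha\subseteq Y_{\alpha+1}$.

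For closedness of $Y:=\bigcup_{\alpha<\omega_1}Y_\alpha$, first countability of $X$ lets us approach any $x\in\overline Y$ by a sequence $\{y_n\}\subseteq Y$; each $y_n\in Y_{\alpha_n}$ for some $\alpha_n<\omega_1$, and $\alpha:=\sup_n\alpha_n$ is still countable, so by~(3), $\{y_n\}\subseteq Y_\alpha$ and $x\in\overline{Y_\alpha}$; then either $x\in Y_\alpha\subseteq Y$ or $x\in\partial Y_\alpha\subseteq Y_{\alpha+1}\subseteq Y$ by~(1). Openness is free. The step I expect to be the main obstacle is~(1): a priori, the boundary of a Lindel\"of open set in $X$ could be quite large (even non-Lindel\"of), and it is precisely Lemma~\ref{metric}, and thus the combined force of $\mathbf{GA}$, that constrains it enough to be captured by the next countable elementary submodel.
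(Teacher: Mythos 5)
Your proposal is correct and follows essentially the same route as the paper: fix a continuous elementary chain, set $Y_\alpha=X(M_\alpha)$, get (2) and (3) from Lemma~\ref{elementary} and continuity, and get (1) by using Lemma~\ref{metric} to see that $\overline{Y_\alpha}$ is Lindel\"of and then elementarity to capture a countable cover inside $M_{\alpha+1}\cap\mathcal B_X$. Your write-up merely supplies a bit more detail (separability of $\overline{Y_\alpha}$, the sequence argument for closedness) than the paper's terser version.
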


\begin{proof}
Fix a continuous elementary chain $\{ M_\alpha : \alpha\in
\omega_1\}$ for $X$. Fix any $\alpha\in \omega_1$.
By Lemma \ref{elementary}, $Y_\alpha = X(M_\alpha)$ is Lindelof with non-empty
boundary, $\partial X(M_\alpha)$, and 
each
 component in  $\partial X(M_\alpha)$
is non-trivial.
By Lemma \ref{metric}, $\overline{X(M_\alpha)}$
is Lindelof, and so by elementarity,
  $M_{\alpha+1}\cap \mathcal B_X$ is a cover  
of $\overline{X(M_\alpha)}$. Finally, $\bigcup \{ Y_\alpha : \alpha
\in \omega_1\} $
 is closed 
 because any $x\in X$ that is in the closure will be in
 $\overline{Y_\alpha}\subset Y_{\alpha+1}$ for some $\alpha\in
 \omega_1$. 
\end{proof}

Now we are ready to give a proof of the main theorem. The 
clever topological  ideas of the 
proof are taken from  \cite[p189]{Nyikos2002}.
 A sketch of this proof appears in~\cite{Tall2005}. The main idea of
 the proof is to use $\mathbf{PPI}^+$ to find  copies of
 $\omega_1$ and, combined with Lemma \ref{component}, to show 
 that, in fact, there
 are copies of the  Tychonoff plank in the space. It is easily
shown that the Tychonoff plank is not hereditarily normal.

\begin{theorem} The statement $\mathbf{GA}$ implies that
each   hereditarily normal  manifold of dimension
  greater than 1 is metrizable.
\end{theorem}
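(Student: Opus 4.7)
The plan is to argue by contradiction: suppose $X$ is a non-metrizable hereditarily normal manifold of dimension $n>1$, and produce a subspace of $X$ homeomorphic to the Tychonoff plank, contradicting hereditary normality.  By Corollary~\ref{component}, after replacing $X$ by a non-metrizable clopen subspace I may assume $X=\bigcup_{\alpha<\omega_1}Y_\alpha$ with $\overline{Y_\alpha}\subseteq Y_{\alpha+1}$, each $\partial Y_\alpha$ non-empty, and each of its components non-trivial.  By Lemma~\ref{metric} each $\overline{Y_\alpha}$ is Lindel\"of and metrizable, and hence $\sigma$-compact since $X$ is locally compact.

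The first task is to produce a closed copy $W\subseteq X$ of $\omega_1$ by applying $\mathbf{PPI}^+$ to a subspace built in the manner of the proof of Lemma~\ref{metric}.  Recursively choose $y_\alpha\in\partial Y_\alpha\setminus\bigcup_{\beta<\alpha}U_\beta$ together with an open neighbourhood $U_\alpha\ni y_\alpha$ whose compact closure sits inside some ball of $\mathcal B_X$ and inside $Y_{\alpha+1}$; the recursion succeeds because $\bigcup_{\beta<\alpha}U_\beta\subseteq Y_\alpha$ is disjoint from $\partial Y_\alpha$.  Setting $\mathcal I=\{a\subseteq\omega_1:|\{y_\alpha:\alpha\in a\}\cap U_\beta|<\aleph_0\text{ for every }\beta\}$, the argument of Lemma~\ref{metric} (using $\mathfrak b>\omega_1$) shows $\mathcal I$ is a $P$-ideal; apply $\mathbf{P}_{22}$.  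In the main alternative, a stationary $B\subseteq\omega_1$ with $[B]^{\aleph_0}\cap\mathcal I=\emptyset$ makes $A=\overline{\{y_\beta:\beta\in B\}}$ sequentially compact (the argument of Lemma~\ref{metric} transfers verbatim, since each $\overline{U_\beta}$ is compact) and non-compact (else $A\subseteq Y_\gamma$ for some $\gamma$, but $y_\beta\in Y_\gamma$ only for $\beta<\gamma$, contradicting $|B|=\aleph_1$), so $\mathbf{PPI}^+$ applied to the first countable $A$ yields the required $W$.  The other alternative gives a stationary $E$ with $\{y_\alpha:\alpha\in E\}$ discrete, which $\mathbf{CW}$ separates by pairwise disjoint open sets; these can play the role of the Euclidean tube around $W$ in the plank construction below.

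The main topological step, following~\cite[p.~189]{Nyikos2002}, is to exploit $\dim X>1$ to build a Tychonoff plank around $W$.  For each $p_\alpha\in W$ pick a ball $B_\alpha\in\mathcal B_X$ containing $p_\alpha$ in its interior and, using that $B_\alpha\cong\mathbb B^n$ has more than one dimension, a short arc $\gamma_\alpha\colon[0,1]\to B_\alpha$ transverse to $W$ with $\gamma_\alpha(0)=p_\alpha$; coherence of $\{(B_\alpha,\gamma_\alpha):\alpha<\omega_1\}$ is arranged inside the elementary submodels of Corollary~\ref{component}, while hereditary $\aleph_1$-collectionwise Hausdorffness (from $\mathbf{CW}$) lets me shrink the arcs to be pairwise disjoint.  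For each $k<\omega$ the set $W_k=\{\gamma_\alpha(1/k):\alpha<\omega_1\}$ is then a closed copy of $\omega_1$ in $X$ parallel to $W$, and the configuration $W\cup\bigcup_k W_k$, together with the common accumulation behaviour at the $\omega_1$-end provided by the non-trivial components of the $\partial Y_\alpha$'s and the closedness of $\bigcup_\alpha Y_\alpha$, contains a subspace homeomorphic to the Tychonoff plank --- the desired contradiction.  I expect the main obstacle to be the coherent placement of the transverse arcs $\gamma_\alpha$ and the verification of the correct plank structure at the $\omega_1$-end: this is exactly where the hypothesis $\dim X>1$ does its work, paralleling the cut-point argument of Lemma~\ref{elementary}.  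A secondary difficulty is routing case~(i) of the $\mathbf{P}_{22}$ dichotomy through the plank construction without a pre-built copy of $\omega_1$.
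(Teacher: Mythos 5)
Your first half tracks the paper's argument fairly closely, but it mishandles the dichotomy. To use $\mathbf{P}_{22}$ you must actively \emph{rule out} alternative (i): the paper does this by showing that for every stationary $E\subseteq\omega_1$ the set $D=\{x_\alpha:\alpha\in E\}$ is non-discrete --- a pressing-down argument using clause (3) of Corollary~\ref{component} plus separability of $Y_\beta\setminus\overline{D}$ to find $\alpha,\alpha'\in E$ with $U_\alpha\cap U_{\alpha'}\cap Y_\beta\neq\emptyset$ for an essentially arbitrary choice of the $U_\alpha$'s, and then $\mathbf{CW}$ applied to the normal first countable subspace $D\cup(X\setminus\overline D)$. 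Your suggestion that in case (i) the separating open sets ``can play the role of the Euclidean tube around $W$'' is not a substitute: in that case there is no $W$ at all, and you acknowledge this as an unresolved ``secondary difficulty.'' It is not secondary; without eliminating (i) the construction never starts.

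The larger gap is the endgame. Your transverse-arc construction is not carried out and would not work as described: there is no reason the sets $W_k=\{\gamma_\alpha(1/k):\alpha<\omega_1\}$ are closed, or copies of $\omega_1$, or that the configuration contains an embedded Tychonoff plank --- and the paper never produces one. What the paper actually does with $\dim X>1$ is entirely channelled through clause (2) of Corollary~\ref{component}: for $\gamma$ in a cub with $w_\gamma\in\partial Y_\gamma$, it picks an infinite compact \emph{connected} $K_\gamma\subseteq\partial Y_\gamma$ containing $w_\gamma$ and a second point $y_\gamma\in K_\gamma\setminus W$, reruns the $P$-ideal/$\mathbf{P}_{22}$/sequential-compactness machinery twice more to get stationary $A_1\supseteq A_2$ with the relevant closures compact and $\overline{\{y_\alpha:\alpha\in A_1\}}$ disjoint from $W$, separates these by a Urysohn function $f$ (so that $f[K_\alpha]=[0,1]$ by connectedness of $K_\alpha$), chooses $z_\alpha\in K_\alpha$ with $f$ injective on $\{z_\alpha:\alpha\in A_2\}$, and finally exhibits, inside the explicit subspace $Z\setminus Z_r(C_\omega')$ of $Z=\overline{\{z_\alpha:\alpha\in A_2\}}$, two disjoint closed sets ($Z_r\setminus Z_r(C_\omega')$ and $H\setminus Z_r(C_\omega')$) that cannot be separated, directly contradicting hereditary normality. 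This is ``plank-like'' only in spirit; replacing it with an actual plank embedding via disjoint transverse arcs is precisely the part you defer (``I expect the main obstacle to be the coherent placement of the transverse arcs and the verification of the correct plank structure''), so the proposal as written omits the core of the proof.
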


\begin{proof}
Assume that $X$ is a non-metrizable hereditarily normal
 manifold of dimension greater
than 1. Let $\{ Y_\alpha : \alpha \in \omega_1\}$ be chosen
 as in Corollary \ref{component}.
For each $\alpha\in \omega_1$, choose any
 point $x_\alpha\in \partial Y_\alpha$. 
  It is immediate that $\overline{\{ x_\alpha :
   \alpha\in \omega_1\}}$ is nowhere dense in $X$.
Also let
 $\{ U_\alpha : \alpha \in \omega_1\}\subset \mathcal B_X$ be
any selection so that $U_\alpha\subset Y_{\alpha+1}$
and $x_\alpha$ is in the interior of $U_\alpha$. 
 We first show that
if $E\subset  \omega_1$ is stationary, then 
$D= \{ x_\alpha : \alpha \in E\}$ is not  discrete. 
 For each limit $\alpha$, using item (3) of Corollary \ref{component},
there is a $\beta_\alpha < \alpha$ such that $U_\alpha\cap
Y_{\beta_\alpha}\setminus \overline{D}$
 is not empty. By the pressing down lemma, 
 there is a fixed $\beta$ such that $\beta = \beta_\alpha$ for
 uncountably many $\alpha\in E$. Since $Y_\beta\setminus \overline{D}$
 is separable,
 there are $\alpha, \alpha'\in E$ such that $U_{\alpha}\cap
 U_{\alpha'}\cap Y_\beta$ is not empty. 
The choice
 of the sequence of $U_\alpha$'s was (basically) arbitrary, 
and so it follows that  $D$ can not be separated 
Since $D\cup (X\setminus \overline{D})$ is first countable,
and thus $\aleph_1$-collectionwise Hausdorff,
 this shows that $D$ cannot be discrete. 

Define the ideal $\mc{I}$ by $a\in \mc{I}$ if $a\in
[\omega_1]^{\aleph_0}$ and, for all $\beta\in \omega_1$, 
 $\{ x_\alpha : \alpha \in a\}\cap U_\beta$ is finite. As before,
 $\mc{I}$ is a P-ideal on $\omega_1$. If $A\subset \omega_1$ satisfies
 that 
 $[A]^{\aleph_0}\subset \mc{I}$, then $D = \{ x_\alpha : \alpha \in
 A\}$
is discrete. Therefore there is no such stationary $A$, and
so by  $\mathbf{P}_{22}$, there is a
stationary subset $A$ of $\omega_1$ such that 
 $ [A]^{\aleph_0}\cap \mc{I}$ is empty. It again follows that
 $X_A = 
\overline{\{x_\alpha : \alpha \in A\}}$ is sequentially compact. Let
 us choose, by  
applying $\mathbf{PPI}^+$, a  copy $W$
 of $\omega_1$ contained in $X_A$. Let 
$W = \{ w_\xi : \xi\in\omega_1\}$ be the homeomorphic indexing of
$W$.  
For each $\alpha\in \omega_1$, Lemma \ref{metric} implies that
 $\overline{Y_\alpha}$ is Lindelof and, by elementarity,
 contained in $Y_{\alpha+1}$. 
Therefore,  we have that,  for each $\alpha$,
$W\cap Y_\alpha$ is countable, and its closure is contained in 
 $Y_{\alpha+1}$.  It follows that there is a cub
 $C\subset\omega_1$ satisfying that for each
 $\gamma<\delta$ both in $C$, the set $\{ w_\beta : \gamma\leq
 \beta<\delta\}$ is contained in   $Y_\delta\setminus
 Y_\gamma$.  Therefore 
 $\{ w_\gamma : \gamma\in C\}$ is another copy of $\omega_1$ with
the property that $w_\gamma \in \partial Y_\gamma$ for each 
 $\gamma\in C$. 

For each $\gamma\in C$, apply Lemma \ref{component}, so as to choose
infinite compact connected $K_\gamma\subset \partial Y_\gamma$ with
 $w_\gamma\in K_\gamma$. Make another selection $y_\gamma\in
 K_\gamma\setminus W $ arbitrarily. Now choose, 
for each $\gamma\in C$, 
a basic 
 set  $V_\gamma\in \mathcal B_X$  so that $y_\gamma$ is in the interior of
 $V_\gamma$ and $V_\gamma \subset X\setminus W$. Proceeding as we did
 with the sequence of $\{ x_\alpha : \alpha\in \omega_1\}$, there is a
 stationary set $A_1\subset C$ so that 
$\{ y_\alpha : \alpha \in A_1\}$ has sequentially compact closure. 
Since $\overline{Y_{\gamma}}$ is Lindelof and contains
 $\{ y_\alpha : \alpha \in A_1\cap \gamma\}$ for each $\gamma\in C$,
 it follows then that the closure of 
 $\{ y_\alpha : \alpha \in A_1\cap \gamma\}$ is compact and disjoint
 from $W$ for each $\gamma\in C$. Since $X$ is first countable, this
 also implies that the closure of the entire set
$\{ y_\alpha : \alpha \in A_1\}$  is disjoint from the closed set $W$.  
Since $X$ is normal, there is a continuous function $f$ from $X$ into
$[0,1]$
such that $f[W]=\{1\}$ and
$f(y_\alpha) = 0$ for all $\alpha\in A_1$. Note that $f[K_\alpha] =
[0,1]$
for each $\alpha\in A_1$. Finally, using $f$ we will show there is a
non-normal subspace for our contradiction. For each $\alpha\in A_1$, 
 choose, yet another, point $z_\alpha \in K_\alpha$ in such a way
that the map $f$ restricted to $\{ z_\alpha : \alpha\in A_1\}$ is 
one-to-one. Repeating the steps above, there is a stationary set
 $A_2\subset A_1$ so that the closure of each countable subset
 of $\{ z_\alpha : \alpha \in A_2\}$ is compact. Let $Z$ denote
the closure of the set $\{ z_\alpha : \alpha \in A_2\}$,
and for each $r\in [0,1]$, let 
 $Z_r = f^{-1}(r)\cap Z$.  We will use the following property of these
 subsets of $Z$.
Consider any open set $U$ of
 $X$ that contains $Z_r\cap \partial Y_\gamma$ for any $r\in [0,1]$
and $\gamma\in C_\omega$. Since $Z_r \cap Y_\gamma$ has compact
closure, there is a $\beta <\gamma$ such that
 $Z_r\setminus Y_\beta$ is contained in $U$. By the pressing down
 lemma, given any open $U$ containing $Z_r\cap Y_\gamma$ for all
any stationary set of $\gamma\in \omega_1$, there is a $\beta\in
\omega_1$ such that $Z_r\setminus Y_\beta$ is contained in $U$.

 Choose any  $r\in  [0,1]$
such that $r$ is a complete accumulation point of $\{ f(z_\alpha) :
\alpha \in A_2\}$. 
Choose any sequence
 $\{r_n : n\in \omega\}$ converging to $r$ so that each $r_n$
is also a complete accumulation point of $\{ f(z_\alpha) : \alpha \in
A_2\}$. There is a common cub $C_\omega$ such
that $Z_{r_n}\cap \partial Y_\gamma$ 
and $Z_r\cap \partial Y_\gamma$ is not empty for each 
 $n\in \omega$ and $\gamma \in C_\omega$. 
Let $Z_r(C_\omega') = \{ Z_r\cap \partial Y_\gamma : \gamma \in
C_\omega'\}$ 
where $C_\omega'$ is the set of relative limit points of
$C_\omega$. Since 
$Z_r$ is closed in $Z$, it follows that
 $Z_r\setminus Z_r(C_\omega')$ is  a closed subset of $Z\setminus
 Z_r(C_\omega')$. We also note that $H = Z \cap
 \bigcup \{ \partial Y_\gamma : \gamma\in C_\omega'\}$ is a closed
subset of $Z$, and so $H\setminus Z_r(C_\omega')$ is a  closed
subset of $Z\setminus Z_r(C_\omega')$. We show that $Z_r\setminus 
 Z_r(C_\omega')$ and $H\setminus Z_r(C_\omega')$ can not be separated by 
disjoint open subsets of $Z\setminus Z_r(C_\omega')$. Since 
$Z_r\setminus Z_r(C_\omega')$ and $H\setminus Z_r(C_\omega')$ are disjoint, 
this will complete the proof. 
Suppose that $U$ is an open subset of $Z\setminus Z_r(C_\omega')$ 
that contains $H\setminus Z_r(C_\omega')$.  By the above mentioned
property of each $Z_{r_n}$, we have that there is a $\beta\in
\omega_1$
such that $Z_{r_n}\setminus Y_\beta$ is contained in $U$ for each
$n\in \omega$. Choose any $\beta < \gamma\in C_\omega \setminus
C_\gamma'$. For each $n$, choose $z_n \in Z_{r_n}\cap \partial
Y_\gamma$. Since $Z\cap \partial Y_\gamma$ is compact, let
 $z$ be any limit point of $\{ z_n : n\in \omega\}$. By the continuity
 of $f$, $f(z) = r$ and so $z\in Z_r\cap \partial Y_\gamma$. In
 other words, $z\in Z_r\setminus Z_r(C_\omega')$, completing
the proof that $H\setminus Z_r(C_\omega')$ and $Z_r\setminus
Z_r(C_\omega')$ can not be separated by open sets.
\end{proof}

\section{on $\mathbf{P}_{22}$}

As usual $S$ is a coherent Souslin tree. For us, it will
be a full branching downward closed subtree of $\omega^{<\omega_1}$. 
Naturally it is a Souslin tree (no uncountable antichains) and 
has the additional property

\begin{quotation}
 for each $s\in S$ and $t\in \omega^{<\omega_1}$ with $\dom(t) =
  \dom(s)$, $t$ is in $S$ if and only if
 $\{ \xi \in \dom(s) : s(\xi)\neq t(\xi)\}$ is finite.
\end{quotation}

In a forcing argument using $S$ as the forcing poset, we
 will still use $s < s'$  to mean that $s \subset s'$, and so, 
 $s'$ is a stronger condition.  We will also use
the more compact notation   $o(s)$ to
 denote the order-type of $\dom(s)$ for $s\in S$.
Now we give a proof that our statement $\mathbf{P}_{22}$ is a
consequence of PFA(S)[S] following \cite[6.1]{Todorcevic}.

Here is a simple standard fact about forcing with a Souslin tree
 that we will need repeatedly.

\begin{lemma} Suppose that
$S$ is a Souslin tree and $S\in M$ for some countable
  elementary\label{basiclemma}
 submodel of any $H(\theta)$ ($\theta \geq {\omega_2}$).
If $\dot x, \dot X \in M$ are Souslin names, and $s\in S\setminus M$,
  then there is an $s'< s$ with $s'\in M$ such that
\begin{enumerate}
\item $s\Vdash \dot X = \emptyset$ if and only if  $s' \Vdash \dot X
  =\emptyset$,
\item $s\Vdash \dot x \in \dot X$ if and only if $s'\Vdash \dot x\in
  \dot X$.
\end{enumerate}
\end{lemma}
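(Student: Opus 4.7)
The plan is a standard maximal-antichain argument exploiting the countable chain condition of $S$ together with elementarity of $M$. For each of the statements $\phi_1 : \dot X = \emptyset$ and $\phi_2 : \dot x \in \dot X$, introduce the upward-closed set $U_i = \{t \in S : t \Vdash \phi_i\}$. Since $\Vdash_S$ is definable from $S \in M$ and the parameters $\dot x,\dot X$ all lie in $M$, both $U_i$ belong to $M$. Let $A_i$ denote the antichain of $\subseteq$-minimal elements of $U_i$: for each $t\in U_i$, the set of $\alpha \le \dom(t)$ with $t\upharpoonright\alpha \in U_i$ is a non-empty end-segment of $\dom(t)+1$, and its least element $\alpha_t$ delivers a minimal element $t\upharpoonright\alpha_t \in A_i$. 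Since $A_i$ is definable from $U_i$, we have $A_i \in M$; since $S$ is Souslin, $A_i$ is countable; and a countable element of $M$ is a subset of $M$, so $A_i \subseteq M$.

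Given $s \in S \setminus M$: if $s\Vdash \phi_i$ then $s\in U_i$, so there is a unique $a_i \in A_i$ with $a_i \subseteq s$, and necessarily $a_i \in M$; conversely, if $s \not\Vdash \phi_i$ then upward-closure of $U_i$ precludes any $t \subseteq s$ from forcing $\phi_i$. Define $s'$ to be whichever of $a_1,a_2$ has the longer domain, using the root of $S$ if neither is defined. When both are defined they are comparable (both lie below $s$ in the tree), so $s'$ is well-defined, belongs to $M\cap S$, and satisfies $s' \subseteq s$. Each of the two biconditionals then holds: the forward direction uses that $s' \supseteq a_i$ forces $\phi_i$ whenever $s$ does (by upward-closure of $U_i$), and the reverse direction is automatic from $s' \subseteq s$.

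The only substantive point is verifying that each $A_i$ is countable and lies inside $M$; this combines the Souslin property (for countability) with elementarity applied to the definition of $A_i$ from $U_i \in M$ (for $M$-membership). Once this is in hand, the remainder of the proof is a straightforward case analysis on whether $s$ forces each $\phi_i$.
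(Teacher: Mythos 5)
Your proof is correct and follows essentially the same route as the paper's: by ccc a countable antichain of conditions settling the statement lies in $M$, hence is a subset of $M$, and the relevant node below $s$ serves as $s'$. The only cosmetic differences are that the paper reduces clause (2) to clause (1) via the name $\dot X \cap \{\dot x\}$ and takes $s'$ to be the restriction of $s$ to a level of $S$ on which the statement is everywhere decided, whereas you treat the two clauses in parallel using the minimal elements of the upward-closed sets of forcing conditions.
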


\begin{proof}
The second item follows from the first (by simply considering
the set $\dot X \cap \{\dot x\}$) so we consider any $\dot X$ in $M$.
Since $S$ is a ccc forcing and the set of conditions that decide
the statement ``$\dot X = \emptyset$'' is dense and open, there is a
$\gamma\in M\cap\omega_1$ such that each element of $S_\gamma$ decides  
this statement. Therefore $s\restriction \gamma$ decides the statement
and, since $s$ is a stronger condition than $s\restriction \gamma$, 
they assign the same truth value to the statement.
\end{proof}

Note, for example, Lemma \ref{basiclemma} can be used to show that
 if $\dot E\in M$  is an $S$-name of a subset of
 $\omega_1$
 and $s\Vdash M\cap \omega_1\in \dot E$,
  then $s\Vdash \dot E $ is stationary.
To see this   we can let $\dot X$ denote the set of 
(ground model) 
 cub subsets of $\omega_1$ that are disjoint from $\dot E$.
 Then, if $s\Vdash M\cap \omega_1 \in \dot E$, we have
 that   for all
    cub $C$ in $M$, $s\Vdash C\cap \dot E$ is not empty.
So,   if $s'<s$ is in $M$, we have that $s'$ forces
 that $\dot X$ is empty, and $\dot E$ is stationary.

\begin{proposition}  Assume PFA(S) then $S$ forces
 that $\mathbf{P}_{22}$ holds.
\end{proposition}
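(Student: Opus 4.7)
The plan is to follow the proof of the PID in \cite[6.1]{Todorcevic}, with two adaptations: to $P$-ideals on $\omega_1$ with a stationary-set conclusion (as in $\mathbf{P}_{22}$), and to the PFA(S)[S] setting. Working in $V\models\text{PFA}(S)$ and letting $G\subseteq S$ be generic, in $V[G]$ one has a $P$-ideal $\mc I$ on a stationary $B\subseteq\omega_1$. I argue by contrapositive, assuming alternative~(ii) of $\mathbf{P}_{22}$ fails --- that every stationary $D\subseteq B$ contains a countable subset in $\mc I$ --- and producing a stationary $E\subseteq B$ with $[E]^{\aleph_0}\subseteq\mc I$, which is alternative~(i).

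The witness $E$ is produced in $V[G]$ by a generic filter for an $S$-preserving proper poset $\mc P\in V$, built from $S$-names $\dot{\mc I},\dot B$, in the side-condition style of the PID forcing. A condition has the form $p=(s_p,x_p,\mathcal M_p)$ where $s_p\in S$, $x_p$ is a finite subset of $\omega_1$ with $s_p\forces x_p\in\dot{\mc I}\cap[\dot B]^{<\omega}$, and $\mathcal M_p$ is a finite $\in$-chain of countable elementary submodels of some sufficiently large $H(\theta)$ containing $S,\dot{\mc I},\dot B$. The compatibility between $x_p$ and $\mathcal M_p$ uses the $P$-ideal property: for each $M\in\mathcal M_p$ one fixes a canonical $J_M\in\dot{\mc I}\cap M$ which is forced to almost-contain every countable member of $\dot{\mc I}$ coded in $M$, and one requires that the portion of $x_p$ above $M\cap\omega_1$ lies almost in $J_M$. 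The order strengthens $s_p$, enlarges $x_p$, and extends $\mathcal M_p$. The $\omega_1$ dense open sets to be met by the PFA(S)-filter are of two kinds: for each $\alpha<\omega_1$, conditions forcing both $\sup x_p$ and $\max(M\cap\omega_1)$ for some $M\in\mathcal M_p$ to exceed $\alpha$; and, for an $\omega_1$-indexed family of $S$-names of clubs (enough to capture all clubs in $V[G]$, since $S$ is ccc), conditions forcing $x_p$ to meet the named club. Then $E=\bigcup\{x_p: p\text{ in the filter},\,s_p\in G\}$ is, in $V[G]$, stationary in $B$ with $[E]^{\aleph_0}\subseteq\mc I$.

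Density of the club-meeting sets uses the failure of~(ii): given any $p\in\mc P$, the $S$-name of $\{\beta\in\dot B:x_p\cup\{\beta\}\in\dot{\mc I}\}$ is forced to be stationary, since otherwise its complement in $\dot B$ would be stationary and would, by the failure of~(ii), contain a countable $I\in\dot{\mc I}$, whose individual elements would contradict the definition of the complement. The main obstacle is verifying properness of $\mc P\times S$ (equivalent to $\mc P$ being proper and $S$-preserving): given a countable $M^*\prec H(\theta')$ containing all parameters and $(p,s)\in M^*$, one builds an $(M^*,\mc P\times S)$-generic extension by, for each dense $D\in M^*$, using Lemma \ref{basiclemma} to transport the assertion ``some extension of $p$ lies in $D$'' from an $S$-condition outside $M^*$ down to some $s'\in S\cap M^*$, selecting an extension $q\in D\cap M^*$ via $s'$, and appending $M^*$ to the chain. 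The delicate point is coordinating the $S$-side-condition, the model chain, and the $J_M$-absorption constraints simultaneously across the extension --- the $\subseteq^* J_M$ requirement must be formulated so that new elements added inside $M^*$ preserve all prior constraints automatically, while the new constraint attached to $M^*$ is satisfied at the moment $M^*$ enters the chain by virtue of $x_q\subseteq M^*$ and the pseudo-intersection property of $J_{M^*}$.
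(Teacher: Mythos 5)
Your overall architecture is the right one---a side-condition, $S$-preserving proper poset in the style of Todorcevic's PID forcing, with the absorption sets $J_M$ (the paper's $a(s,M)$ from its Claim~1) mediating between the working part and the model chain, and properness of $\mathcal P\times S$ as the main technical burden. The genuine gap is in how you obtain \emph{stationarity} of $E$. You propose to meet, for ``an $\omega_1$-indexed family of $S$-names of clubs (enough to capture all clubs in $V[G]$),'' the dense sets of conditions whose working part meets the named club. No such family exists: ccc-ness of $S$ does reduce the problem to ground-model clubs, but there are $2^{\aleph_1}\geq\aleph_2$ of those, and the club filter on $\omega_1$ is not $\aleph_1$-generated (if $\{C_\alpha:\alpha<\omega_1\}$ were a base, the club of limit-indexed elements of the increasing enumeration of $\Delta_\alpha C_\alpha$ would contain no $C_\beta$, since each $C_\beta$ contains a tail of $\Delta_\alpha C_\alpha$ and in particular its successor-indexed elements). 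Since PFA(S) only hands you a filter meeting $\omega_1$ many dense sets, stationarity cannot be secured by club-meeting dense sets, and this is exactly the point where $\mathbf{P}_{22}$ is harder than the plain PID.

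The paper's mechanism is different and is the essential extra ingredient. It fixes in advance a partition $\{E_s:s\in S\}$ of $\omega_1$ into stationary sets and builds it into the definition of a condition ($M\cap\omega_1\in E_s$ implies $s<p(M)$); the conditions carry no finite subset of $\omega_1$ at all, and the witness is the $S$-name $\dot E=\{\delta\in B: p_\delta(M_\delta)\in\dot g\}$ read off from the generic chain of submodels, with $[\dot E]^{\aleph_0}\subseteq\dot{\mathcal I}$ coming from the ordering's requirement that new models' heights land in the sets $a(q(M),M)$. Stationarity is then obtained by reflection via the paper's Lemma~2.1: $\dot E$ is a name in $M_\delta$, $p_\delta(M_\delta)$ forces $\delta=M_\delta\cap\omega_1\in\dot E$, and hence that single condition forces $\dot E$ to meet every ground-model club lying in $M_\delta$, i.e., to be stationary; the partition guarantees such a condition above every node $s_0$ of $S$. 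This also repairs a secondary defect of your scheme: because your conditions carry a working node $s_p$ and compatibility forces the $s_p$'s in a filter to be pairwise comparable, they trace out a single countable branch of $S$, so your $E$ is controlled only below one node, whereas the partition spreads the construction across all of $S$ so that the desired conclusion is forced by an extension of an arbitrary $s_0$ (which, after the homogeneity and maximum-principle reduction at the start of the paper's proof, is what is actually needed).
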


\begin{proof}
Let $\dot {\mathcal I}$ be the $S$-name for a P-ideal on a stationary
subset $B$ of  $\omega_1$
and assume that some $s_0\in S$ forces that $\dot {\mathcal I}\cap 
 [E]^{\aleph_0}\neq \emptyset$ for all stationary sets $E$. 
If $s_0$ also forces that $\dot {\mathcal I}$ is  a counterexample to 
 $\mathbf{P}_{22}$, then using that $S$ is homogeneous
 and the forcing maximum principle, we can assume
 that $s_0$ is the root of $S$ and just show that $\dot {\mathcal I}$
 is not a counterexample.
Fix any well-ordering 
 $\prec$ of $H(\aleph_2)$.

\begin{claim}
For each countable elementary submodel 
 $M$ of $(H(\aleph_2),\prec)$  and each $s \in S_{M\cap \omega_1}$,
 there is a set $a(s,M)$ such that 
$s\Vdash a(s,M)\in \dot {\mathcal
   I}$ and $s\Vdash a \subset^* a(s,M)$ for all $a\in 
M\cap  \dot{\mathcal M}$.
\end{claim}

Proof of Claim 1:  Since $s$ forces that $\dot{\mathcal I}$ is a
P-ideal, there is a $\prec$-minimal name $\dot a$ such that
 $1$ forces that each member of 
$M\cap \dot{\mathcal I}$ is a subset mod finite of $\dot a$. Since $S$
is ccc, there is a countable maximal 
antichain $\{ s_n : n\in \omega\}$ and a countable family 
 $\{ a_n : n\in \omega\}$ of countable subset of $\omega_1$ such
that, for each $n$, $s_n\Vdash \dot a =a_n $. Furthermore, $s$ forces
a value on each member of $M\cap \dot {\mathcal I}$. Let 
 $\mathcal J$ denote the countable family of sets forced by
 $s$ to be members of $M\cap \dot{\mathcal I}$. Note that every member
 of $\mathcal J$ is mod finite contained in every member of 
$\{ a_n :n\in \omega\}$. We may choose $a(s,M)$ to be the
$\prec$-minimal set that splits this $(\omega,\omega)$-gap.

\bigskip

One change from \cite{Todorcevic} is that we
 begin with a partition
 $\mathcal E = \{ E_s : s\in S\}$  of 
  $\omega_1$ by stationary sets
 so that, in addition, $ E_s\subset B$   for each 
 $s\in S$ other than the root $\emptyset$.
 Thus $\bigcup \{ E_s : s\in S \setminus \{\emptyset\}\}$ contains
  $\omega_1\setminus B$.
 We also require that $\dom(s) < \delta$ for all 
limit $\delta\in E_s$.
Then we 
let $\mathcal P$ be the collection of all mappings of the form
 $p:\mathcal M_p \rightarrow S$,  where

\begin{enumerate}
\item $\mathcal M_p $ is a finite $\in$-chain of countable elementary
  submodels of $(H(\aleph_2),\mathcal E, \prec)$
\item $M\in \mathcal M_p$ and $\delta = M\cap \omega_1\in E_s$
implies $s< p(M)\in S_{\delta}$,
\item $M\in N\in \mathcal M_p$ implies $a(p(M),M)\in N$.
\end{enumerate}

We let $p\leq q$ if, 

\begin{enumerate}
\addtocounter{enumi}{3}
\item $\mathcal M_p\supset \mathcal M_q$ and $q = p\restriction
  \mathcal M_q$,
\item $
  N\cap\omega_1\in a(q(M),M)$ whenever $N\cap\omega_1\notin E_{\emptyset}$,
  $p(N) < q(M)$
with  $M\in N\in \mathcal M_q$,
  and $M\in  \mathcal M_p\setminus
\mathcal M_q$. 
\end{enumerate}

In order to apply PFA(S) to $\mathcal P$, 
we have to show that $\mathcal P$ is a proper poset that preserves
that $S$ is Souslin. Once we do, we let $\mathcal G$ be a filter on
$\mathcal P$ that meets sufficiently many (no more than $\omega_1$)
dense subsets to
ensure that there is a cub $C\subset \omega_1$ such that for each
 $\delta\in C$, there is a $p_\delta\in \mathcal G$ and an $M_\delta\in
 \mathcal M_{p_\delta}$ with $M_\delta\cap \omega_1 = \delta$.  The
 role of the family $\mathcal E$ is to ensure the next Claim.

\begin{claim}
Each $s_0\in S$ forces that
the $S$-name $\dot E = \{ \delta\in B : p_\delta(M_\delta) \in \dot g\}$ is
a stationary subset of $\omega_1$, where $\dot g$
is the $S$-name of the generic branch through $S$. 
\end{claim}

Proof of Claim 2:   It suffices to show
 that $s_0$ does not force that $\dot E$ is not stationary
 by finding an extension that forces $\dot E$ is stationary.
Choose any $\delta \in C\cap E_{s_0}$. We have that $s = p_\delta(M_\delta)$ 
forces that $\delta = M_\delta\cap \omega_1$ is in $\dot E$.
Also, since $\delta \in E_{s_0}$, we have,
from the definition of $\mathcal P$, that $s_0 < s$.
By Lemma \ref{basiclemma}, as explained in 
the discussion immediately following it, we have that 
 $s$ forces that $\dot E$ is stationary. 
 \bigskip

\begin{claim} Each $s\in S$ forces that
$[\dot E]^{\aleph_0} \subset \dot{\mathcal I}$, where
 $\dot E$ is defined in Claim 2.
\end{claim}

Proof of Claim 3: It suffices to show that if $\gamma\in  \omega_1$
and $s\in S_\gamma$, then $s\Vdash \dot E\cap \gamma \in \dot{\mathcal
  I}$. Recall that there is a $\delta > \gamma$ such that
$s<p_\delta(M_\delta)$. By the definition of the ordering on
$\mathcal P$ (item (5)) we have that $\{ \gamma \in \dot E : 
 p_\gamma(M_\gamma) < p_\delta(M_\delta) \ \mbox{and}\
M_\gamma\notin \mathcal M_{p_\delta}\}$ is contained in 
 $a( p(M_\delta), M_\delta)$.  Therefore, $p(M_\delta)$ forces
that $\dot E\cap \delta \in \mathcal I$. 

\bigskip

We finish the proof of the Proposition by proving that $S\times \mathcal
P$ is proper. Let $M$ be any countable elementary submodel of
$H(\kappa)$ for some regular $\kappa > \omega_2$. We show that any pair
$(s^\dagger, q)$ where $s^\dagger\in S\setminus M$ and $M\cap
H(\aleph_2)\in  \mathcal M_{p_0}$ is an $M$-generic condition for
$S\times \mathcal 
P$. Consider any dense open set $D$ of $S\times \mathcal P$ that is a
member of $M$. By extending the condition $(s^\dagger,q)$ we can assume
that  $(s^\dagger,q)$ is in $D$ and that there is some countable
elementary submodel of $H(\kappa)$ containing $q$ but not $s^\dagger$.
It is useful to regard $D$ as an $S$-name $\dot D$
of a dense open subset of
$\mathcal P$ in the sense that if $(t,p)\in D$, then we interpret
this as $t\Vdash p\in \dot D$. 

It is evident from conditions (4) and (5) of the definition of 
$\mathcal P$ that $q_0 = q\restriction M\in M$ and
that $q$ is an extension of $q_0$. Let $\delta = M\cap \omega_1$.
Let $\{ M_1, \ldots , M_{\ell}\}$
be an increasing enumeration of $\mathcal M_q\setminus M$. Of course
$M_1 = M\cap H(\aleph_2)$. Let $\{ s_0, \ldots , s_m\}$ be any
one-to-one list of the set $\{ s^\dagger \restriction \delta,
 q(M_1)\restriction \delta, \ldots , q(M_{\ell})\restriction
 \delta\}$
so that $s_0 = s^\dagger\restriction \delta$. For each $1\leq j\leq \ell$,
 let $m_j $ denote the value such that $s_{m_j} = q(M_j)\restriction
 \delta$. 
Let $J$ denote those $1\leq j\leq\ell$ such that 
 $q(M_j)\restriction [\delta, M_j\cap \omega_1) \subset s^\dagger$.
To avoid trivialities, we can assume that we extended $(s^\dagger, q)$
if necessary, 
so as to have that $J$ is not empty.

Since $S$ is a coherent Souslin tree, there is a
 $\bar\delta\in M$ such that $s_0\restriction [\bar\delta, \delta) =
 s_i \restriction [\bar\delta, \delta)$ for each $i\leq m$. By
increasing $\bar\delta$ we can also ensure that $\bar M\cap \omega_1 <
\bar \delta$ for each $\bar M\in \mathcal M_{q_0}$. 
Let $\bar s_i = s_i\restriction \bar \delta$ for $i\leq m$, and
 notice that $\{ \bar s_0,\ldots, \bar s_m\}\in M_0$. 
For each $s\in S$ with $\bar\delta\leq \dom(s)$, let
 $\bar s_0 \oplus s$ denote the function
 $\bar s_0\cup s\restriction [\bar \delta, \dom(s))$; since $S$ is a
 coherent Souslin tree $\bar s\oplus s\in S$.
Note that $J = \{ j <\ell : \bar s_0\oplus p(M_j) < s^\dagger \}$. 
Also, define $J_B$ to be the set $\{ j\in J : M_j\cap \omega_1\notin
 E_\emptyset\}$.

Say that $(t,p)\in D$ is \underbar{like} $(s^\dagger,q)$ providing
\begin{enumerate}
\item there is a $M^p_0\in \mathcal M_p$ such that $\bar\delta \in
  M^p_0$ and  $q_0  = p \restriction M^p_0$,
\item  $\mathcal M_p\setminus M^p_0 $ has size $\ell$, enumerated as
 $\{ M^p_0,\ldots M^p_{\ell-1}\}$ in increasing order,
\item $\bar s_{i_j}< p(M^p_j)$ for $j<\ell$
\item $J = \{ j < \ell : \bar s_0 \oplus p(M^p_j) < t \}$,
\item $J_B = \{ j\in J : M^p_j\cap \omega_1 \notin E_{\emptyset}\}$.
\end{enumerate}

Our proof that
$S\times \mathcal P$ is proper will depend on finding some
$(t,p)\in D\cap M$ that is \underbar{like} $(s^\dagger,q)$
and, in addition, is compatible with $(t_0,q)$. Of course this
requires that $t< s^\dagger $, 
but what else? Since $\mathcal M_p\in M_0$
and $p<q_0$ we automatically have that $\mathcal M_p\cup \mathcal M_q$
is an $\in $-chain. The most difficult (and remaining)
 requirement is to ensure
that if $p(M^p_j) < q(M_k)$ then $M^p_j\cap \omega_1 $
must be in $
a(q(M_k),M_k)$ if $M^p_j\notin E_{\emptyset}$.
 Interestingly, the values of $1\leq j\leq \ell$ that
we will have to worry about are exactly those values in $J_B$
(in most proofs it would be all values of $J$). This is
because we must have that $p(M^p_j) < s_{i_k}$ and so $\bar s_0 \oplus
 p(M^p_j) < s_0 < s^\dagger$. Since also, $t<s^\dagger$ and $t\in M_0$, we
have that $\bar s_0\oplus p(M^p_j) < t$, which is the requirement
that $j\in J$. One frequently troublesome aspect to these proofs is
that the values of $k$ for which $p(M^p_j) < q(M_k)$ will be
all  $k$ such that $i_k = i_j$, not just values of $k$ in $J$.
For easier reference in the remaining proof, let
 $a_k = a(q(M_k) , M_k)$ for $1\leq k \leq m$.

The set $\mathbf{L}\subset D$ consisting of those pairs $(t,p)$ that
are 
\underbar{like} $(s^\dagger,q)$ is an element of $M$. 
For each
 $(t,p)\in \mathbf{L}$, let $T_{t,p} = \langle t_0, t_1, \ldots, t_\ell\rangle$
be a re-naming of $\langle t, p(M^p_1), p(M^p_2),\ldots,
p(M^p_\ell)\rangle$.  Let $\mathcal T(\mathbf {L})$ denote the 
set $\{ T_{t,p} : (t,p)\in \mathbf{L}\}$, and for each $1\leq j\leq
m$,    let $\mathcal T(\mathbf {L})_j = 
\{  \vec t \restriction j : \vec t
= \langle t_0, t_1, \ldots, t_\ell\rangle \in \mathcal
T(\mathbf{L})\}$.
Of course $\mathcal T(\mathbf{L})_\ell $ is equal to $\mathcal
T(\mathbf{L})$.  Since $D$ is an open subset of
 $S\times\mathcal P$, let us note that if $
 \langle t_0, t_1, \ldots, t_{j-1}\rangle \in \mathcal
T(\mathbf{L})_j$,
then $ \langle \bar t_0, t_1, \ldots, t_{j-1}\rangle \in \mathcal
T(\mathbf{L})_j$ for all $\bar t_0 > t_0$.

Now we want to  use $\mathcal T(\mathbf {L})$ to define an
$S$-name of  a subset of $[\omega_1]^{\leq\ell}$. 
For $\vec t = \langle t_0, t_1,\ldots, t_{j-1}\rangle
\in \mathcal T(\mathbf{L})_j$  ($1\leq j\leq \ell$),
 let $\Delta_{\vec t}$ be the sequence $ \langle \delta_1, \ldots, 
 \delta_{j-1}\rangle$ where $\delta_i = \dom(t_i)$.
We define $\dot F_\ell$ to be the $S$-name consisting of all
pairs $(t_0, \langle \delta_1, \ldots , \delta_\ell \rangle)$
for which there is a $\vec t = \langle t_0, \ldots, t_\ell\rangle$
in $\mathcal T(\mathbf{L})$ such that $\Delta_{\vec t} = 
\langle \delta_1, \ldots , \delta_\ell\rangle$. In saying
that $\dot F_\ell$ is an $S$-name we are adopting
 the standard abuse of notation that 
an element of the ground model can be used as an $S$-name for itself.
 By reverse
induction on $\ell > k \geq 1$, we define $\dot F_k$. 
Having defined $\dot F_{k+1}$, we define $\dot F_k$. 
If $k+1\notin J_B$, then $\dot F_k = \dot F_{k+1}$. 
If $j = k+1$ is in $J_B$, then 
$(t_0, \langle \delta_1, \ldots, \delta_{k} \rangle)$ 
is in $\dot F_k$ providing $t_0$ forces that the set
$$\dot F_j( \langle \delta_1, \ldots, \delta_{k} \rangle) = 
\{ \gamma : (\exists \bar t_0)(\exists \vec{\delta}~)~~
  (t_0,\vec \delta~) \in \dot F_j \ \mbox{and}\ 
\langle \delta_1, \ldots, \delta_k, \gamma\rangle =
\vec\delta\restriction j  \}$$ is stationary.

The next, somewhat standard, step is to prove that,
 for each $k<\ell$ with $k+1\in J$, 
$s^\dagger \Vdash \Delta_{T_{s^\dagger,q}} \restriction k \in 
 \dot F_k$.
Again, this is by reverse induction on $\ell>k\geq 0$.  
Let $\vec\gamma = \Delta_{T_{s^\dagger, q}} = \langle \gamma_1,\ldots , 
\gamma_\ell\rangle$. Certainly, $s^\dagger \Vdash
 \vec\gamma \in \dot F_\ell$.  We again take note
of the fact that $\dot F_k\in M_0$ for each $0\leq k\leq \ell$. 
Let $J_B=\{j_1, \ldots , j_{\bar \ell}\}$
 be listed in increasing order. For $j_{\bar \ell}\leq k\leq \ell$,
 we have that $s^\dagger\Vdash \dot F_k = \dot F_\ell$. 
Now let $j = k+1 = j_{\bar \ell}$ and observe that
 $\dot F_{j}(\vec \gamma\restriction k)$ is a member of the model 
 $M_{j}$, and that $\gamma_{j} = M_{j}\cap \omega_1 $
is forced by $s^\dagger$ to be an element of
 $\dot F_{j}(\vec \gamma\restriction k)$.  We show that
this means that $s^\dagger$
forces that 
 $\dot F_{j}(\vec \gamma\restriction k)$ is stationary.
Within $M_j$, there is a maximal antichain (in fact a level) of $S$
 with the property that each member of the antichain decides
whether or not 
 $\dot F_{j}(\vec \gamma\restriction k)$ is stationary. For each such
 node that decides that it is not stationary, 
there is a cub in $M_j$ that is forced to be disjoint. 
Since $\gamma_j$ is in every cub from $M_j$ and since
 $s^\dagger$ forces that $\gamma_j$ is in $\dot F_j(\vec
 \gamma\restriction k)$, we have that it is forced to be stationary. 
This completes the inductive step that $s^\dagger$ forces
that $\vec\gamma\restriction k$ is in $\dot F_k$.

To complete the proof, we work our way back up 
from $\min(J_B)$ to $\max (J_B)$
in order to pick a suitable $(t,p)\in D\cap M$ that is compatible with
$(s^\dagger, q)$. Recall that the main requirement, once we know
that $(t,p)\in \mathbf{L}\cap M$, is to have that 
 $\delta_j \in a_k$ for each $j\in J_B$
  and each $1\leq k\leq \ell$ with $i_j = i_k$, where
 $\Delta_{T_{t,p}} = \langle \delta_1, \ldots, \delta_\ell\rangle$.
We begin with $j_0 = \min(J)$, and we note that $s^\dagger$ forces
that $\dot F_{j_0-1} \in M_0$ is non-empty.  By Lemma
\ref{basiclemma},
 there is an $t_0\in M\cap S$ with $t_0<s^\dagger$ 
that also forces $\dot F_{j_0-1}$ is not empty. By elementarity,
 there is a sequence $\vec \delta_0\in M_0$ such that
 $t_0 \Vdash \vec\delta_0 \in \dot F_{j_0-1}$. 
By definition, $t_0 \Vdash \dot F_{j_0}(\vec \delta_0) $ is
stationary.
 Now we use our assumptions on  $\dot {\mathcal I}$
  in order to find a member of $\dot F_{j_0}(\vec\delta_0)$
  that  is in $a_{j_0}$.
This next step can seem  a bit like  sleight of hand.  We have
that $t_0\Vdash \dot F_{j_0}(\vec \delta_0)$ is stationary,
and so there is an extension (in $M_0$) of $t_0$ and an infinite
set $a$ that is forced to be contained in $\dot F_{j_0}(\vec \delta_0)$
and to be a member of $\dot {\mathcal I}$. However,
$t_0$ may be
incomparable with $s_{i_{j_0}}$ and so $a$ is of no help in
choosing a suitable element of  $a_{j_0}$.  The solution is to use that 
$S$ is coherent. Let $g$ be a generic filter for $S$ with 
 $s^\dagger\in g$. Since $S$ is coherent, the collection
   $s_{i_{j_0}}\oplus g = \{ s\in S : (\exists t\in g)~~ s < (s_{i_{j_0}} \oplus
     t )\} $ is also a generic filter for $S$ since it is an $\omega_1$-branch.
  The ideal $\mathcal I(s_{i_{j_0}}) $ we get by interpreting
  the name $\dot {\mathcal I}$ using the filter $s_{i_{j_0}}\oplus B$,
  is a P-ideal satisfying that $[E]^{\aleph_0}\cap \mathcal I(s_{i_{j_0}})$
  is non-empty for all stationary sets $E$. Also, the set
   $E = \val_{g}(\dot F_{j_0}(\vec \delta_0))$  is a stationary set. 
   By elementarity, there is an infinite set $a\in M_0$ 
      such that $a\in [E]^{\aleph_0}$ and $a\in \mathcal I(s_{i_{j_0}})$.
Again by elementarity, and Lemma \ref{basiclemma},
there is   a condition $t_1\in M\cap g$  extending
   $t_0$ and
 satisfying   that $t_1 \Vdash a \subset 
   \dot F_{j_0}(\vec\delta_0)$ 
   and $s_{i_{j_0}}\oplus t_1 \Vdash a \in \dot {\mathcal I}$.
 Let us note that $a\subset a_k$ for each $1\leq k\leq m$
  such that $i_{j_0} = i_k$. Therefore, we may choose
    a $\delta_{j_0}
  \in a\cap \bigcap \{ a_k : i_k = i_{j_0}\}$. Next choose 
  any sequence $\vec\delta_1\in M_0$
  such that, by further extending $t_1$, we have that
   $t_1 \Vdash \vec\delta_1\in \dot F_{j_0}$ and 
  witnesses that $\delta_{j_0}\in \dot F_{j_0}(\vec \delta_0)$.
  
  We proceed in the same way to choose $\delta_{j_1}$ and
  an extension $t_2$ of $t_1$ so that $\delta_{j_1} \in a_k$
  for each $k$ with $i_k = i_{j_1}$ and so that $t_2$ forces
  that there is a $\vec\delta_2\in \dot F_{j_2}$ witnessing
  that $\delta_{j_1}\in \dot F_{j_2}(\vec\delta_1)$. Proceeding
  in this way we succeed in choosing $t_{\bar \ell}$ in 
   $M_0$ with $t_{\bar\ell}\subset
   s^\dagger$ and a sequence $\vec\delta_{\bar\ell}$
   satisfying that there is a $p\in M_0$ 
   such that $(t_{\bar\ell}, p) \in \mathbf {L}$,
    $\Delta_{T_{t,p}} = \langle \delta_1, \ldots , \delta_\ell\rangle
    = \vec\delta_{\bar\ell}$, 
   and that
     $\delta_{j_n} \in a_k$ 
      for each $1\leq n \leq \bar \ell$ and
     $1\leq k\leq \ell$ such that 
      $i_k = i_{j_n}$.  Of course this means that $(t_{\bar \ell}, p)\in D\cap M$
      and $(t_{\bar \ell}, p) \not\perp (s^\dagger, q)$ as required.
 \end{proof}

\section{on $\mathbf{PPI}^+$}

This first result is a reformulation of a classic result of
Sapirovskii.

\begin{lemma} Assume that $X$ is a\label{itsomega1}
sequentially compact non-compact space. 
Then either $X$
 has a countable  subset with non-compact  closure
or $X$   has  an $\aleph_1$-sized subset $E$
and an open set $\mathcal W$ containing the sequential closure
of $E$ and such that $E$ 
has no complete accumulation point in $\mathcal W$.
\end{lemma}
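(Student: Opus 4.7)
The plan is to argue the contrapositive: assume every countable subset of $X$ has compact closure, and then construct the required $E$ and $\mathcal{W}$. The hypothesis already forces $X$ to be countably compact, since any infinite countable subset without an accumulation point would be closed and discrete inside its own compact closure, hence finite. Because a Lindel\"of countably compact space is compact and $X$ is not, $X$ fails to be Lindel\"of, so one can fix an open cover $\mathcal{U}$ of $X$ with no countable subcover. This extra slack---no \emph{countable} rather than merely no finite subcover---is what makes the recursion go through.

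Next, I build by recursion on $\alpha<\omega_1$ points $x_\alpha\in X$ together with an increasing sequence of open sets $V_\alpha$, each a countable union of members of $\mathcal{U}$, satisfying
$V_\alpha \supseteq \bigcup_{\beta<\alpha}V_\beta \cup \overline{\{x_\beta:\beta<\alpha\}}$
and $x_\alpha\notin V_\alpha$. At successor stages one only needs to adjoin a single member of $\mathcal{U}$ containing $x_\beta$; at limit stages the set $\overline{\{x_\beta:\beta<\alpha\}}$ is compact by the standing assumption and so is covered by a finite subfamily of $\mathcal{U}$, which one adjoins to $\bigcup_{\beta<\alpha}V_\beta$, keeping $V_\alpha$ a countable subunion of $\mathcal{U}$. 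Since $\mathcal{U}$ has no countable subcover, $X\setminus V_\alpha$ is nonempty, so $x_\alpha$ can be chosen outside $V_\alpha$.

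Finally, set $E=\{x_\alpha:\alpha<\omega_1\}$ and $\mathcal{W}=\bigcup_{\alpha<\omega_1}V_\alpha$. Any convergent sequence from $E$ has countable range, so its indices are bounded by some $\gamma<\omega_1$, and its limit lies in $\overline{\{x_\beta:\beta<\gamma\}}\subseteq V_\gamma \subseteq\mathcal{W}$; hence $\mathcal{W}$ contains the sequential closure of $E$. For a point $p\in \mathcal{W}$, pick $\alpha$ with $p\in V_\alpha$; since the $V_\beta$'s are increasing and $x_\beta\notin V_\beta$ for $\beta\geq\alpha$, the neighborhood $V_\alpha$ of $p$ meets $E$ only in the countable set $\{x_\beta:\beta<\alpha\}$, so $p$ cannot be a complete accumulation point of the $\aleph_1$-sized set $E$.

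The main delicate point is the limit stage of the recursion: one must keep each $V_\alpha$ a \emph{countable} union from $\mathcal{U}$ rather than letting it grow into an arbitrary open set, and the appeal to non-Lindel\"ofness via ``no countable subcover'' is exactly what permits this throughout the length-$\omega_1$ recursion.
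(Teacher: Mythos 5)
Your proof is correct and follows essentially the same route as the paper's: a length-$\omega_1$ recursion choosing $x_\alpha$ outside an increasing countable union of members of a cover with no countable subcover, where each stage swallows the (compact, by hypothesis) closure of the earlier points, and $\mathcal W$ is the union of these open sets. The only cosmetic difference is that the paper tracks the countable subfamilies $\mathcal U_\alpha$ rather than their unions $V_\alpha$, and both arguments leave the same small step implicit (that $\bigcup_{\gamma<\omega_1}\overline{\{x_\beta:\beta<\gamma\}}$ is sequentially closed, which upgrades ``limits of sequences from $E$'' to the full sequential closure).
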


\begin{proof}
We may as well assume that countable subsets of $X$ have 
compact closure. Since $X$ is sequentially compact
and not compact, it is not Lindel\"of.
Let $\mathcal U$ be any open cover of $X$ that has no
countable   subcover and satisfies that the closure of each
member of $\mathcal U$ is contained in some other member
of $\mathcal U$.  We begin an inductive construction by choosing 
any countable subset   $\mathcal U_0$ of $  \mathcal U$ and
any point $x_0 \in X\setminus \bigcup \mathcal U_0$.  Suppose
 $\lambda < \omega_1$, and that we have chosen,
 for each $\alpha < \lambda$, a countable collection
  $\mathcal U_\alpha\subset \mathcal U$ and a point $x_\alpha
  \in X \setminus \bigcup \mathcal U_\alpha$,
  so that $\overline{\{x_\beta: \beta <\alpha\}}\cup
\bigcup_{\beta<\alpha} \mathcal U_\beta \subset  \mathcal U_\alpha$.
Since $\mathcal U$ has no countable subcover, this induction continues
for $\omega_1$-many steps. We let $E$ be the 
sequence $\{x_\beta : \beta\in \omega_1\}$ and let
 $\mathcal W$
 be the union of the collection $\bigcup\{ \mathcal U_\alpha :
 \alpha \in \omega_1\}$. 
By construction we have that the closure
of every countable subset of $E$ is
contained in $\mathcal W$. But also, for each $y\in \mathcal W$, 
we have
that there is an $\alpha\in \omega_1$ such that
 $y\in \bigcup\mathcal U_\alpha$ while
 $\bigcup\mathcal U_\alpha\cap E = \{ x_\beta : \beta < \alpha\}$ is 
countable.
\end{proof}

For the remainder of the section we have an
 $S$-name of a sequentially compact non-compact
  space $\dot X$ which we may assume has base set $\theta$.
According to Lemma \ref{itsomega1}, we will assume 
that either 
 $\omega\subset \theta$ is forced (by 1) to be dense in $X$,
or,  that
the sequential closure
of the points $\omega_1\subset \theta$ are forced (by 1)
to have no
complete accumulation point in some open neighborhood
 $\mathcal W$. In particular then, the sequential closure of
 $\omega_1$ itself
contains no complete accumulation point of $\omega_1$.

In the first  case, our application
of PFA(S) will be simplified if we use
the method sometimes called the 
\textit{cardinal collapsing trick\/}.
This is to show that we may again assume that we have
an uncountable set, denoted $\omega_1$, so that the 
sequential closure is contained in an open set $\mathcal W$
in which $\omega_1$ has no complete accumulation point.
It will be easier to remember if we call this the 
separable case. 
The simple countably closed poset $2^{<\omega_1}$ is $S$-preserving.
We will work, for the separable case,
 in the forcing extension by $2^{<\omega_1}$ -- a model in which CH
holds. Just as we have in Lemma \ref{itsomega1}, we would like
to show that there is an uncountable set $E$ and an open set 
$\mathcal W$
so that the sequential closure of $E$ is contained in 
$\mathcal W$ while
having no complete accumulation points in $\mathcal W$. 
We certainly 
have that the forcing $2^{<\omega_1}$ preserves that $\dot X$
is forced by $S$ to be sequentially compact and not compact. 
We briefly work in the forcing extension by $2^{<\omega_1}\times S$.
Let $X$ denote the space obtained from the name $\dot X$.
If the base set $\theta$ for $ X$
is equal to $\mathfrak c$ then, since it is forced to be countably
compact, it is forced that $ X$ has an uncountable set with no
complete accumulation point at all.  On the other hand if
$ X$ has cardinality greater than $\mathfrak c$, we can fix
any point $ z$ of $X$ that is  not  in the sequential closure of 
 $\omega$. Since $ X$ is regular and $\omega$ is dense,
the point $ z$ has 
character $\omega_1$. Let $\{  W_\alpha : \alpha \in \omega_1\}$ 
enumerate a neighborhood base for $z$ satisfying that the closure
of $W_{\alpha+1}$ is contained in $W_\alpha$ for each $\alpha\in
\omega_1$. For each $\alpha$, we may choose a point $x_\alpha$ from
the sequential closure of $\omega$ so that $x_\alpha$ is in $W_\beta$
for all $\beta \leq \alpha$. Now we have that the uncountable set $E =
\{ x_\alpha : \alpha\in \omega_1\}$ satisfies that its sequential
closure is contained in the open set $\mathcal W
 = X\setminus \{z\}$ and has no
complete accumulation point in $\mathcal W$. 

\bigskip

Next we choose  an assignment of $S$-names of neighborhoods $\{ \dot
U(x,n) :   x\in \theta, n\in \omega \}$, each of which is forced
to have closure contained in $\mathcal W$.
 We may assume that 1 forces that these 
 are regular  descending and that $\omega_1\cap \dot U(x,0)$ is countable
for all $x\in \theta$. These are chosen in the generic extension
by $2^{<\omega_1}$ in the separable case.
If we are also assuming that $\dot X$ is forced to be first countable,
then we assume that $\{ \dot U(x,n) : n\in \omega\}$ is forced
to  form a neighborhood  base for $x$.

\subsection{the sequential structure}

Since $S$ is ccc, it follows that if $\{ \dot x_n : n \in \omega\}$ is
a sequence of $S$-names and $1\forces \dot x_n \in X$ for each $n$,
then there is an infinite $L\subset \omega$ such that $1\forces
\{\dot x_n : n \in L\}$ is a converging sequence in $X$. To see this, 
 recursively choose a
 mod finite descending sequence $\{ L_\alpha :\alpha \in \gamma\}$
 and conditions $\{ s_\alpha : \alpha\in \gamma\}$ satisfying that
  $s_\alpha$ forces that $\{ \dot x_n : n\in L_\beta \}$ (for
   $\beta < \alpha$) is not converging, while
    $\{ \dot x_n : n\in L_\alpha \}$ is. Since the family
     $\{ s_\alpha : \alpha \in \gamma\}$ is an antichain, this process
     must end.

\begin{definition} Say that a sequence $\{ \dot x_n : n\in L \}$ is an
  $S$-converging sequence in $\dot X$
 providing $1\Vdash \{\dot x_n : n\in L\}$
is a converging sequence (which includes, for example, constant
sequences).  
\end{definition}

There is a well-known space in the study of sequential spaces, namely
the space $S_\omega$ from \cite{ArhFran}. This is the strongest
sequential topology on the set of finite sequences of integers,
$\omega^{<\omega}$, in which, for each $t\in \omega^{<\omega}$, the
set of immediate successors, $\{ t^\frown n : n\in \omega\}$,
converges to $t$. If $T$ is any subtree  of $\omega^{<\omega}$, we will
consider $T$ to be topologized as a subspace of $S_\omega$.  As
usual, for $t\in T$, $T_t$ will denote the subtree with root $t$ and
consisting of all $t'\in T$ which are comparable with $t$. 
Also for $t\in T$, let $T_t^+$ denote the tree 
$\{ t' \in \omega^{<\omega} : t^\frown t' \in T_t\}$
i.e. the canonically isomorphic tree with root $\emptyset$.

Of particular use will be those 
$T \subset \omega^{<\omega}$ that are
well-founded (that is, contain  no infinite branch). 
 Let $\mathbf{WF}$ denote those downward closed
well-founded  trees $T$ with the property that every branching
node has a full set of immediate successors.
Such a tree will have a root, $\operatorname{root}(T)$
 (which need not be the root of $\omega^{<\omega}$)
which is either the minimal branching node or,
 if there are no branching nodes, the maximum member
of $T$.
When discussing the topology on $T\in \mathbf{WF}$ we ignore
the nodes strictly below the root of $T$.
The meaning
of the rank of $T$ will really be the rank of $T_t$ where $t$ is the
root of $T$.  We use $\operatorname{rk}(T)$ to denote the ordinal $\alpha\in
\omega_1$ which is the rank of $T$. If $t\in T$ is a maximal node, 
then $\operatorname{rk}(T_t) = 0$, and if
$\operatorname{root}(T)\subset t\in T$, then 
 $\operatorname{rk}(T_t) = \sup \{ \operatorname{rk}(T_{t'})+1 
 : t < t'\in T_t\}$.
We let $\mathbf{WF}(\alpha) = \{ T\in \mathbf{WF} :
\operatorname{rk}(T) \leq  \alpha\}$ and
 $\mathbf{WF}({<}\alpha) = \bigcup_{\beta<\alpha} \mathbf{WF}(\beta)$.

If we have a Hausdorff space $X$ on a base set
containing the set $ \omega_1$ and we have a point 
$x$ in the sequential closure of $\omega_1$, then 
there is a $T\in \mathbf{WF}$ and a function $y$
from $\max(T)$ into $\omega_1$ such that there is a continuous
extension of $y$ to all of $T$ 
such that $y(\operatorname{root}(T)) = x$
 (it does not matter what value  $y(t)$ takes for
 $t<\operatorname{root}(T)$). 
Since our space $\dot X$ is forced to be sequentially compact, we will
be working with points in the sequential closure of $\omega_1$. 
In fact, we will only work with such function pairs
 $y, T$ that are forced by 1 to extend continuously to all of $T$.
The goal is to try to make choices of points in $\dot X$ that
are, in a strong sense, not dependent on the generic filter for $S$.

 For each $\alpha\in \omega_1$, let $\mathbf{Y}_\alpha$
denote the set of all functions $y$ into $\omega_1$
where $\dom(y)$ is the set of all maximal nodes of some $T\in 
\mathbf{WF}(\alpha)$. 
We put $y\in \mathbf{Y}_\alpha$
 in $Y_\alpha$ providing $1$ forces
that $y$ extends continuously  to all of $T_y$ as a function into
$\dot X$. We let $Y=\bigcup_{\alpha\in \omega_1} Y_\alpha$ and
for $y\in Y$, we will
abuse notation by letting $y$ also denote the name of the
unique continuous extension of $y$ to all of
 $\{ t\in T_y : \operatorname{root}(T) \subset t\}$. 
More precisely, if needed, for each $t\in T_y\setminus \max(T)$ with 
 $\operatorname{root}(T)\subset t$, $y(t)$ can be used to denote 
the name  that has the form  $\{ (s,\xi_s) : s\in S_{\gamma}\}$ 
where $\gamma$ is minimal such that each $s$ in $S_\gamma$
decides the value of $y(t')$ for each $t\leq t'\in T_y$
 in the continuous extension of $y$
and, of course, $s$ forces $y(t) = \xi_s\in \theta$  for each $s\in S_\gamma$.
The minimality of $\gamma$ makes this choice canonical. 
Thus for $y\in Y$ and $\operatorname{root}(T_y)\subset t\notin
\max(T_y)$, 
 the sequence $\{ y({t^\frown n}) : n\in \omega   \}$ is an $S$-converging
  sequence that is forced to converge to $y(t)$. Note also that if
$y\in Y$ and $t\in T_y$, then $y\restriction (T_y)_t \in Y$. 

\begin{definition} Say that
 $y_1$ and $y_2$ in ${Y}$ are equivalent, denoted $y_1\approx
 y_2$, providing $T_{y_1}^+ = T_{y_2}^+$, and for each maximal $t\in
 T_{y_1}^+$, $y_1(\operatorname{root}(T_{y_1})^\frown t) $
equals  $y_2(\operatorname{root}(T_{y_2})^\frown t) $. 
\end{definition}

Clearly if $y_1\approx y_2$, then $y_1(\operatorname{root}(T_{y_1}))$
is the same name as  $y_2(\operatorname{root}(T_{y_2}))$. 
Now that we have identified our structure
 $Y$  we extend the notion to define a closure
operator on any given finite power of $Y$ which will help
us understand points in the sequential closure of $\omega_1$ in $\dot
X$. If $y\in Y$, we use $e(y)$ as an alternate notation
for $y(\operatorname{root}(T_y))$. Similarly, if $\vec y\in Y^n$ (for
some $n\in \omega$), we will use $e(\vec y)$ to denote the point
 $\langle e(\vec y_0), e(\vec y_1), \ldots, e(\vec y_{n-1})\rangle$.

\begin{definition}
 For\label{seql}
 each integer $n>0$, and subset $B$ of $Y^n$ we similarly define
 the hierarchy $\{ B^{(\alpha)} : \alpha\in \omega_1\}$
by recursion.  In addition, we again (recursively) view
each $ \vec b\in B^{(\alpha)}$ as naming a point in $X^n$. The
 set $B$ will equal $B^{(0)}$. Naturally the point $e(\vec b)$
named
 is the point of $X^n$ named coordinatewise by $\vec b$.
 
 For limit $\alpha$, $B^{(\alpha)}$
 (which could also be denoted as $B^{(<\alpha)}$) will equal
  $\bigcup_{\beta<\alpha} B^{(\beta)}$. The members of $B^{(\alpha+1)}$
  for any $\alpha$, will consist of the union of 
  $B^{(\alpha)}$ together with all
  those $\vec b = \langle y_i : i\in n\rangle \in (Y_{\alpha+1})^n$ such
  that there is  
a sequence $\langle \vec b_k : k\in \omega \rangle$ so that
\begin{enumerate}
\item for each $k\in \omega$, $\vec b_k$ is a member  of $B^{(\alpha)}\cap
  (Y_\alpha)^n$, 
\item for each $i\in n$ and $k\in \omega$, $(\vec b_k)_i\in Y$ is equivalent to
 $y_i\restriction (T_{y_i})_{t_i^\frown k}$, where $t_i$ is the root
 of $T_{y_i}$.
\end{enumerate}
When $\vec b$ is constructed from a sequence $\{ \vec b_k : k\in \omega\}$
as in this construction, we can abbreviate this by saying that
 $\{ \vec b_k : k\in \omega\}$ $Y$-converges to $\vec b$. Also if 
  we say that $\{ \vec b_k : k\in L\}$ Y-converges to $\vec b$ for some
  infinite set $L$, we just mean by a simple re-enumeration of
   $\{ \vec b_k : k\in L\}$.
\end{definition}

 For $n>1$ we may view $Y^n$ as an $S$-sequential structure
and for any $A\subset Y^n$, we say that $A^{(\omega_1)}$ is the
sequential closure and is sequentially closed.
 Notice that this   $S$-sequential structure on $Y^n$ is defined in the
ground model.

The next lemma should be obvious.

\begin{lemma} For each $A\subset Y$, $1$ forces that $e[
A^{(\omega_1)}]$
  is a sequentially compact subset of $X$.
\end{lemma}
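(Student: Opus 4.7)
The plan is to reduce the statement to showing that $e[A^{(\omega_1)}]$ is sequentially closed in $X$, and then to realize the limit of any such convergent sequence as the value $e(y)$ for a fresh $y$ built by gluing witnesses at a new root. Since $1$ forces $X$ to be sequentially compact and a sequentially closed subspace of a sequentially compact space is itself sequentially compact, it suffices to show, in any $S$-generic extension, that whenever a sequence $\{v_n\}_{n<\omega}$ in $e[A^{(\omega_1)}]$ converges to some $v\in X$, the point $v$ also lies in $e[A^{(\omega_1)}]$.

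For this I would pick witnesses $w_n\in A^{(\omega_1)}$ with $e(w_n)=v_n$. Each $w_n$ lies in $A^{(\alpha_n)}$ for some $\alpha_n<\omega_1$; since $S$ is ccc and so preserves $\omega_1$, the ordinal $\alpha:=\sup_n\alpha_n$ is still countable, and all witnesses lie in $A^{(\alpha)}\cap Y_\alpha$. Now build a single $y$ whose tree $T_y$ has a fresh root $t$ with children $\{t^\frown k : k<\omega\}$, the subtree below $t^\frown k$ being an isomorphic copy of $(T_{w_k})^+$ and $y$ restricted to that subtree equivalent to $w_k$. By Definition~\ref{seql} the sequence $\{w_k\}_k$ then $Y$-converges to $y$, placing $y$ in $A^{(\alpha+1)}\subset A^{(\omega_1)}$ once we check $y\in Y_{\alpha+1}$. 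Continuity of the extension at the internal nodes inside each $(T_y)_{t^\frown k}$ is inherited from $w_k$, while continuity at the fresh root $t$ is precisely $y(t)=\lim_k y(t^\frown k)=\lim_k e(w_k)=\lim_k v_k=v$, which holds by hypothesis. Hence $v=e(y)\in e[A^{(\omega_1)}]$, as required.

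The chief bookkeeping obstacle is the interface between the generic extension, where $\{v_n\}$ and its witnesses naturally live, and the ground model, in which $A^{(\omega_1)}$ and the $Y$-convergence relation are defined. I would handle this by representing $\{v_n\}$ by a ground-model sequence of $S$-names, extracting an $S$-converging subsequence via the subsequence fact recorded at the start of \S 3.1, and then selecting the $w_n$ as ground-model names for elements of $A^{(\omega_1)}$; the ccc of $S$ confines the countably many possible ground-model values of these names to a common $A^{(\alpha)}$ with $\alpha<\omega_1$, after which the construction of $y$ may be carried out in $V$.
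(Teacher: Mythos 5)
The paper offers no proof of this lemma (it is introduced with ``The next lemma should be obvious''), so your write-up is in effect supplying the omitted argument, and the construction you describe --- glue the witnesses $w_k$ at a fresh root so that $\{w_k\}$ $Y$-converges to a new $y$ with $e(y)$ the limit, placing $y$ in $A^{(\alpha+1)}\subseteq A^{(\omega_1)}$ --- is surely the intended one. You also correctly isolate the two delicate points: membership in $Y_{\alpha+1}$ requires that $1$ (not merely the working condition) force convergence of $\langle e(w_k)\rangle$, which you obtain from the subsequence-extraction fact at the start of \S 3.1, and the hierarchy $A^{(\omega_1)}$ is a ground-model object while the convergent sequence lives in the extension.

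However, the last step of your bookkeeping has a genuine gap. The glued $y$ is a single ground-model element of $Y$, determined by the actual sequence $\langle w_k : k\in\omega\rangle$ of witnesses, so you must produce that sequence in $V$, not merely in $V[g]$. The ccc of $S$ gives you, via countable maximal antichains, only a countable ground-model set $W\subseteq A^{(\alpha)}$ of \emph{possible} values of the witnesses; it does not by itself rule out that the selection function $n\mapsto w_n$ (equivalently, the subset of an enumeration of $W$ that actually occurs) is a new real of $V[g]$, in which case no single $y\in A^{(\omega_1)}$ glues the witnesses and sequential closedness could fail. What saves the argument is that $S$ is Souslin and hence $\omega$-distributive: every maximal antichain lies below some countable level of $S$, so (taking a supremum over the countably many antichains deciding the $\dot w_n$) some level of $S$ decides the entire sequence $\langle \dot w_n : n\in\omega\rangle$ at once, and $S$ adds no new $\omega$-sequences of ground-model elements. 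Once you invoke this fact --- rather than bare ccc-ness of an abstract poset --- the witness sequence lies in $V$, the supremum $\alpha$ of the relevant ranks is countable as you say, and the gluing goes through exactly as you describe.
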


\begin{definition} For each $S$-name $\dot A$ and $s\forces \dot A\subset
  Y^n$,  we define 
the $S$-name  $(\dot A)^{(\omega_1)}$ according to the
  property that for each $s<t$ and $t\forces \vec y\in (\dot
  A)^{(<\omega_1)}$, there is a countable $B\subset Y^n$ such that 
 $t\forces B\subset \dot A$ and $\vec y\in B^{(<\omega_1)}$. 
\end{definition}

 For an $S$-name $\dot A$ and $s\forces \dot A\subset Y^n$, we will
also  interpret 
 $e[(\dot A)^{(\omega_1)}]$
in the forcing extension in the natural way
 as a subset of $\dot X^n$.  This may need some further
clarification.
 
\begin{lemma}
Suppose that   $\vec y$ is a member of $B^{(\alpha)}$ for some
$B\subset Y^n$ and some $\alpha<\omega_1$.
Also suppose that $\{ s_i : i <\ell \}\subset S$ and that $\dot W$ is
an S-name for a  
neighborhood\label{cohere} of
 $e({\vec y}\,)$. Then there is a $\vec b \in B$ such that for each $i<\ell$, 
 there is an $s_i' \supset s_i$ forcing that $e({\vec b}\,)\in \dot W$.
\end{lemma}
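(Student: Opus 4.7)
The plan is to argue by induction on the least $\alpha<\omega_1$ with $\vec y\in B^{(\alpha)}$. For the induction to close, I would formulate the hypothesis in its natural condition-wise form: given finitely many conditions $\{s_i:i<\ell\}\subset S$ and an $S$-name $\dot W$ such that each $s_i\forces \dot W$ is an open neighborhood of $e(\vec y\,)$, one produces $\vec b\in B$ and extensions $s_i'\supseteq s_i$ with $s_i'\forces e(\vec b\,)\in\dot W$. The lemma as stated is the case where $1\forces \dot W$ is a neighborhood of $e(\vec y\,)$.

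The base case $\alpha=0$ is immediate: $\vec y\in B$, so take $\vec b=\vec y$ and $s_i'=s_i$. Limit stages are immediate from $B^{(\alpha)}=\bigcup_{\beta<\alpha}B^{(\beta)}$.

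For a successor stage $\alpha=\beta+1$ with $\vec y\in B^{(\alpha)}\setminus B^{(\beta)}$, fix a sequence $\{\vec b_k:k\in\omega\}\subset B^{(\beta)}$ that $Y$-converges to $\vec y$. The crucial observation is that $Y$-convergence forces coordinatewise convergence in $\dot X^n$: for each $j<n$, $(\vec b_k)_j\approx y_j\restriction(T_{y_j})_{t_j{}^\frown k}$ where $t_j=\operatorname{root}(T_{y_j})$, so $e((\vec b_k)_j)$ is (as an $S$-name) the value $y_j(t_j{}^\frown k)$, and by the continuity of the canonical extension of $y_j$ on $T_{y_j}$ (built into the definition of $Y$), the sequence $\{y_j(t_j{}^\frown k):k\in\omega\}$ is $S$-converging to $y_j(t_j)=e(y_j)$. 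Hence $1\forces e(\vec b_k)\to e(\vec y\,)$ in $\dot X^n$. Now for each $i<\ell$, since $s_i$ forces $\dot W$ is open around $e(\vec y\,)$, extend $s_i$ to a condition $s_i^*$ that decides a value $k_i\in\omega$ with $s_i^*\forces e(\vec b_k)\in\dot W$ for all $k\ge k_i$. Set $K=\max_{i<\ell}k_i$; then every $s_i^*$ forces $e(\vec b_K)\in\dot W$, so $s_i^*$ forces $\dot W$ is a neighborhood of $e(\vec b_K)$. Since $\vec b_K\in B^{(\beta)}$, the induction hypothesis applied to $\vec b_K$ with the conditions $\{s_i^*:i<\ell\}$ and the name $\dot W$ produces a single $\vec b\in B$ and further extensions $s_i'\supseteq s_i^*$ with $s_i'\forces e(\vec b\,)\in\dot W$, as required.

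The main delicate point is exactly this bookkeeping: after strengthening $s_i$ to $s_i^*$, the name $\dot W$ is only guaranteed to be a neighborhood of $e(\vec b_K)$ below each $s_i^*$, not globally, which is precisely why the induction hypothesis must allow the neighborhood condition to hold only below the enumerated conditions rather than under $1$. Once this strengthened form is adopted, the sequential-closure definition and the continuity clause in the definition of $Y$ do all the real work, and the rest is routine manipulation of $S$-names.
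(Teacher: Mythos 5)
Your proof is correct and follows essentially the same route as the paper's: induction on $\alpha$, using the $Y$-converging sequence from the definition of $B^{(\alpha)}$, extending each $s_i$ so that a single term of the sequence is forced into $\dot W$ below every extension, and then invoking the induction hypothesis. Your explicit reformulation of the inductive statement (requiring $\dot W$ to be a neighborhood only below the listed conditions) is exactly the bookkeeping the paper leaves implicit when it passes from the $s_i'$ to the inductive step, so the two arguments differ only in how the uniform index $K$ is extracted (you decide a tail index per condition; the paper decides the trace of $\dot W$ on the countable set $e[B\cup\{\vec y_k:k\in\omega\}]$).
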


\begin{proof}
  We may suppose that $B$ is a countable set and we may proceed
  by induction on $\alpha$.  Let $y_i $ be equal to $\vec y_i$ for
   $i<n$, and let $t_i$ denote the root of $T_{y_i}$.
  By the definition of $B^{(\alpha)}$, 
  there is  a   sequence 
  $\langle \vec y_k : k\in \omega\rangle$
   with the property that $\vec y_k\in B^{(<\alpha)}$ for each $k$,
   such that, for each $i<n$ and each $k\in \omega$, 
   $(\vec y_i)\restriction (T_{y_i})_{t_i^\frown k}$ is
   equivalent to $(\vec b_k)_i$.
   For each $i<\ell$, choose $s_i' \supset s_i$
  so that $s_i'$ forces a value, $W_i$,  on  
$\dot W\cap e[ 
B\cup\{\vec y_k  : k\in \omega\}]$.     
   Since this sequence, $\{ e(\vec y_k ) : k\in \omega\}$ 
   is assumed to be $S$-converging to a point
   in $W_i$, there is a $k$ such that the point $e(\vec y_k)$
   is in each $W_i$. Of course the result now follows by the induction
   hypothesis.
   \end{proof}

Note that the
 members of ${Y}_0$ have  a singleton domain
and for
each  $\alpha\in \omega_1$, let $e^{-1}(\alpha)$ denote the 
member
of $Y_0$ that sends the minimal tree to the singleton $\alpha$.
Our assumption that $\omega_1$ has no complete accumulation points
in its sequential closure implies that no point is a member
of every member of the family 
$\{~ \left(\{e^{-1}(\alpha):\alpha > \delta \}\right)^{(\omega_1)} :
 \delta\in \omega_1\}$.  That is, 
 this family is a 
 free filter of $S$-sequentially
 closed subsets of $Y$. By Zorn's Lemma, we
can extend it to a  maximal free filter, $\mathcal F_0$,
of  $S$-sequentially closed subsets of $Y$.

\subsection{A new idea in PFA(S)}

Now we discuss again the special forcing
properties that a  coherent Souslin tree will have.
 Assume that $g$ is (the) generic filter on $S$ viewed as a
cofinal branch.  For each $s\in S$, $o(s)$ is  the level
(order-type of domain) of $s$ in  $S$. For any $t\in S$, define
$s \oplus t$ to be the function $s\cup t\restriction[o(s),o(t))$. 
Of course when $o(t) \leq o(s)$, $s\oplus t$ is simply $s$.
 One of the
properties of $S$ ensures that $s \oplus t\in S$ for all $s,t\in S$. We
similarly define $s \oplus g$ to be the branch $\{ s \oplus t : t\in
g\}$.

\begin{definition} Let $bS$ denote the set of $\omega_1$-branches of
  $S$.
\end{definition}

\begin{lemma} In the extension $V[g]$, $bS = \{ s \oplus g : s\in
  S\}$. Furthermore, for each $s\in S$, $V[s \oplus g] = V[g]$. 
\end{lemma}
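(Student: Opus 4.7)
The plan is to leverage the coherence property of $S$ at each level, together with the observation that any proper initial segment of the generic branch (viewed as a function) is automatically a ground-model object.

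For the first claim, given any $b \in bS$ in $V[g]$, consider the symmetric difference $D = \{\xi < \omega_1 : b(\xi) \neq g(\xi)\}$. For every $\alpha < \omega_1$, both $b\restriction\alpha$ and $g\restriction\alpha$ lie in $S_\alpha$, so by coherence they disagree only on a finite set; since this finite set is exactly $D \cap \alpha$, we obtain that $D \cap \alpha$ is finite for every $\alpha < \omega_1$. Should $D$ be infinite, picking $\omega$-many of its elements $\xi_0, \xi_1, \ldots$ and taking $\alpha = \sup_n \xi_n + 1 < \omega_1$ would make $D \cap \alpha$ infinite, which is absurd; hence $D$ is finite. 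Choosing $o = \max(D) + 1$ (or $0$ if $D = \emptyset$) and $s = b\restriction o \in S$, we get $b = s \oplus g$. Conversely, for any $s \in S$, at each level $\alpha \geq o(s)$ the function $s \cup (g\restriction[o(s),\alpha))$ differs from $g\restriction\alpha \in S_\alpha$ only on a finite subset of $o(s)$, hence lies in $S_\alpha$ by coherence; this shows $s \oplus g \in bS$.

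For the second claim, the inclusion $V[s \oplus g] \subseteq V[g]$ is immediate since $s \in V$ and $g \in V[g]$. For the reverse inclusion, set $h = s \oplus g$; it suffices to exhibit $g$ as a union of two sets already present in $V[h]$. The tail $g\restriction[o(s),\omega_1)$ coincides with $h\restriction[o(s),\omega_1)$, which is definable in $V[h]$ from $h$ and the ground-model ordinal $o(s)$. The head $g\restriction o(s)$ is a specific element of the countable ground-model set $S_{o(s)}$, and therefore lies in $V \subseteq V[h]$ as a set. Consequently $g = (g\restriction o(s)) \cup (h\restriction[o(s),\omega_1)) \in V[h]$, yielding $V[g] \subseteq V[s \oplus g]$.

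The one conceptual subtlety I anticipate is the distinction between set-theoretic membership and definability: although $V[h]$ has no uniform method to pick out the correct element of $S_{o(s)}$ from $h$ alone, this is immaterial, since all we need is that the specific set $g\restriction o(s)$ already belongs to the transitive ground model $V$. With that distinction clarified, the proof reduces to straightforward bookkeeping with the operation $\oplus$ and the coherence condition on $S$.
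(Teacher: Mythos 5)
Your proof is correct: the paper states this lemma without proof (it is offered as a standard fact about coherent Souslin trees), and your argument --- using coherence level-by-level to show the symmetric difference $D$ is finite, hence every branch in $V[g]$ has the form $s\oplus g$, and then recovering $g$ from $s\oplus g$ as the union of a ground-model node $g\restriction o(s)$ with the common tail --- is exactly the intended one. Your closing remark is also the right clarification: one only needs $g\restriction o(s)\in V$ as a set, not that it be definable from $s\oplus g$.
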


The filter $\mathcal F_0$ may not generate a maximal filter in the
extension 
$V[g]$ and so we will have to extend it. Looking ahead to the PFA(S)
step, we would like (but probably can't have) this (name of) 
extension to give
the same filter in $V[s \oplus g]$ as it does in $V[g]$. We adopt a new
approach. We will define a filter (of $S$-sequentially closed) subsets
of the product structure $Y^{bS}$. We try to make this filter somehow
symmetric. 

We introduce some notational conventions.  Let $S^{<\omega}$ denote
the set of finite tuples 
$\langle s_i : i< n\rangle$ for which there is a
$\delta$ such that each $s_i\in S_\delta$. Our convention will be that
they are distinct elements. We let $\Pi_{\langle s_i : i<n\rangle}$ denote the
projection from $Y^{bS}$ to $Y^n$ (which we identify with 
 the product $Y^{ \{ s_i \oplus g : i< n\}}$). 

\begin{definition} Suppose that  $\dot A$ is an $S$-name of a subset
  of $Y^n$ for some $n$, in particular, that  some $s$ forces
  this. Let  
 $s'$ be any other member of $S$ with $o(s')=o(s)$. We define a new
 name $\dot A^{s}_{s'}$ (the $(s,s')$-transfer perhaps) which is
 defined by the property that for all $\langle y_i \rangle_{i<n}\in
 Y^n$ and  $s<t\in S$ such that $t\forces \langle y_i \rangle_{i<n}
 \in \dot A$, we have that $s' \oplus t \forces \langle y_i
 \rangle_{i<n} \in \dot A^s_{s'}$. 
\end{definition}

\begin{lemma} For any generic $g\subset S$,
 $\val_{s \oplus g} (\dot A) = \val_{s' \oplus g} (\dot A^s_{s'})$.
\end{lemma}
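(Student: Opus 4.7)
The plan is to prove both inclusions by unpacking the definition of the transfer name $\dot A^{s}_{s'}$ together with the structure of the branches $s\oplus g$ and $s'\oplus g$, and chasing a natural level-preserving bijection between them.

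First I would establish the key transfer observation: for any $t\in S$ with $s\leq t$, we have $t\in s\oplus g$ if and only if $s'\oplus t\in s'\oplus g$. Both statements are equivalent to the existence of some $r\in g$ at level $o(t)$ that agrees with $t$ above level $o(s)=o(s')$ --- on the $s$-side because then $s\oplus r = t$, and on the $s'$-side because then $s'\oplus r = s'\oplus t$. This observation effectively says that the map $t\mapsto s'\oplus t$ is a bijection between the cone of $s\oplus g$ above $s$ and the cone of $s'\oplus g$ above $s'$.

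For the forward inclusion, given $\vec y\in \val_{s\oplus g}(\dot A)$, pick $t\in s\oplus g$ forcing $\vec y\in \dot A$. Since $s\in s\oplus g$ and the branch $s\oplus g$ is uncountable, I may strengthen $t$ within $s\oplus g$ to assume $s<t$. The defining property of $\dot A^{s}_{s'}$ then yields $s'\oplus t\Vdash \vec y\in \dot A^{s}_{s'}$, and the transfer observation places $s'\oplus t$ in $s'\oplus g$; hence $\vec y\in \val_{s'\oplus g}(\dot A^{s}_{s'})$.

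For the reverse inclusion, I take the natural reading of the definition: $\dot A^{s}_{s'}$ is the $S$-name with membership pairs $\{(s'\oplus t,\check{\vec y}): s<t,\ t\Vdash \vec y\in \dot A\}$. Any $\vec y\in \val_{s'\oplus g}(\dot A^{s}_{s'})$ is then witnessed by some such pair with $s'\oplus t\in s'\oplus g$, and the transfer observation applied in reverse gives $t\in s\oplus g$, so $\vec y\in \val_{s\oplus g}(\dot A)$. The one subtle point --- and the closest thing here to an obstacle --- is fixing the formal representative of $\dot A^{s}_{s'}$, but the definition in the paper makes the intended name unambiguous, and once that is in hand the argument is just a direct application of the transfer bijection.
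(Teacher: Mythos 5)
Your argument is correct, and it is exactly the verification the paper intends: the paper states this lemma without proof, treating it as immediate from the definition of the transfer name, and your level-preserving correspondence $t\mapsto s'\oplus t$ between the cones of $s\oplus g$ above $s$ and of $s'\oplus g$ above $s'$ is the right way to make that precise. You also correctly flag the one genuine point of care, namely that the displayed defining property of $\dot A^{s}_{s'}$ only bounds the name from below, so for the reverse inclusion one must read it as the \emph{minimal} name generated by the pairs $(s'\oplus t,\check{\vec y}\,)$ with $s<t$ and $t\Vdash \vec y\in\dot A$; with that reading your two inclusions go through as written.
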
 

\begin{theorem} There is a  family $\mathcal F$ = 
$\{ 
(s^\alpha,\{s^\alpha_i : i<n_\alpha\}, \dot F_\alpha ) : \alpha \in
  \lambda\}$ where, 
\begin{enumerate}
\item for each $\alpha\in \lambda$, $
\{ s^\alpha_i :  i<
  n_\alpha\}\in S^{<\omega}$, $s^\alpha\in S$, $ o(s^\alpha_0)\leq o(s^\alpha)$,  
\item $\dot F_\alpha$ is an $S$-name such that $s^\alpha\Vdash \dot
  F_\alpha = (\dot F_\alpha)^{(\omega_1)} \subset Y^{n_\alpha}$ 
\item for each $s\in S$ and $F\in \mathcal F_0$, $(s,\{s\}, \check F)\in 
 \mathcal F$,
\item for each $s\in S_{o(s^\alpha)}$, $(s,\{s^\alpha_i :i<
  n_\alpha\}, (\dot F_\alpha)^{s^\alpha}_{s})\in \mathcal F$,
\item for each generic\label{previousitem}
 $g\subset S$, the family
$ \{ \Pi_{\langle s^\alpha_i : i<n_\alpha\rangle}^{-1}( \val_{g}(\dot F_\alpha)) : 
  s^\alpha \in g\}$ is finitely directed;
we let $\dot {\mathcal  F}_1$ be the $S$-name for the filter base it
generates,
\item for each generic $g\subset S$ and each $\langle  s_i : i< n\rangle\in 
 S^{<\omega}$, the family \\
  $\{ \val_g (\dot F_\alpha) : s^\alpha\in g \ \mbox{and}\ 
 \{ s_i \oplus g : i< n\} = \{ s^\alpha_i \oplus g : i<n_\alpha\} \}$ is
 a maximal filter on the family of $S$-sequentially closed subsets of
 $Y^n$.
\end{enumerate}
\end{theorem}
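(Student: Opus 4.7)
The plan is to apply Zorn's Lemma to the collection $\mathbf{P}$ of all families $\mathcal{G}$ of triples $(s^\alpha,\{s^\alpha_i : i<n_\alpha\},\dot F_\alpha)$ satisfying (1)--(5), ordered by inclusion. A maximal element of $\mathbf{P}$ will automatically give (6).

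First I would exhibit a starting element of $\mathbf{P}$. Begin with all triples $(s,\{s\},\check F)$ for $s\in S$, $F\in\mathcal F_0$ and close under the transfers required by (4) together with finite intersections, realised as products pulled back to longer coordinate tuples $\vec s$ (so that $\dot F_\alpha$ is a name for $\prod_i F_i \subset Y^{n_\alpha}$). Clauses (1)--(4) hold by construction. Clause (5) holds because the $(s,s')$-transfer of a ground-model name is the same ground-model name, so for any generic $g$ the family in (5) projected to each coordinate tuple is a finite-intersection closure of pullbacks of members of $\mathcal F_0$, which is finitely directed because $\mathcal F_0$ is a filter. Closure of $\mathbf P$ under unions of $\subseteq$-chains is routine: (1)--(4) are pointwise, and finite directedness in (5) is witnessed by finite subcollections, which lie in a single member of the chain.

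Let $\mathcal F$ be a Zorn-maximal element of $\mathbf P$ and verify (6). Assume toward contradiction that there are a generic $g\subset S$ and a tuple $\vec s = \langle s_i : i<n\rangle\in S^{<\omega}$ such that the filter in (5) at the coordinate tuple $\{s_i\oplus g : i<n\}$ fails to be maximal. Then in $V[g]$ there is an $S$-sequentially closed set $B^\ast\subset Y^n$ that is compatible with this filter but not in it. Pick a ground-model $S$-name $\dot B$ and a condition $s^\ast\in g$ with $o(s^\ast)\geq\max_i o(s_i)$ such that $s^\ast\forces \dot B=(\dot B)^{(\omega_1)}\subset Y^n$ and $\val_g(\dot B)=B^\ast$. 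Adjoin $(s^\ast,\vec s,\dot B)$ together with every transfer $(s,\vec s,(\dot B)^{s^\ast}_s)$ for $s\in S_{o(s^\ast)}$; clauses (1)--(4) then continue to hold for the enlarged family.

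The main obstacle is showing that the enlarged family still satisfies (5) for every generic filter $g'$, not merely the $g$ that witnessed the failure. Here the coherence of $S$ enters through the identity $\val_{s\oplus g}(\dot A)=\val_{s'\oplus g}(\dot A^s_{s'})$. For the unique $s\in g'\cap S_{o(s^\ast)}$, the value $\val_{g'}((\dot B)^{s^\ast}_s)$ at the coordinates $\{s_i\oplus g' : i<n\}$ is the image of $B^\ast$ under the branch-relabeling $s^\ast\leftrightarrow s$; applying the same relabeling to each previously chosen triple converts any alleged incompatibility at $g'$ into an incompatibility at $g$, contradicting the choice of $B^\ast$. Hence the enlargement lies in $\mathbf P$ and strictly extends $\mathcal F$, contradicting maximality and establishing (6).
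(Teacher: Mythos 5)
Your proposal is correct and follows exactly the route the paper takes: the paper's entire proof is the one-line remark that this is a ``straightforward recursion or Zorn's Lemma argument over the family of symmetric filters (those satisfying (1)--(5))'', and you have simply supplied the details (base element, chains, and the maximality-plus-transfer argument for (6)). The only bookkeeping point to watch is that at the extension step, as in your base case, one must also adjoin the intersections of $\dot B$ with the previously chosen sets (realised as names on longer coordinate tuples) so that finite directedness in (5) is literally witnessed inside the enlarged family.
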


\begin{proof} 
Straightforward recursion or Zorn's Lemma argument over the family 
 of ``symmetric'' filters (those satisfying (1)-(5)).
\end{proof}

\begin{definition}
 For any $\langle s_i : i<\ell\rangle\in S^{<\omega}$, let 
  $\dot {\mathcal F}_{\langle s_i:i<\ell\rangle}$ denote the filter on 
    $Y^{\ell}$ induced by $ \Pi_{\langle s_i : i<\ell\rangle}(\dot{\mathcal F}_1)$.
\end{definition}

\begin{definition} Let $\mathcal A$ denote the family\label{needA}
 of all
$(s,\langle s_i : i< \ell\rangle, \dot A)$ satisfying that  $o(s)\geq
o(s_0)$,  
$\langle s_i :
i< \ell\rangle \in S^{<\omega}$, and 
 $s\Vdash 
 \dot A \in \dot
 {\mathcal F}_{\langle s_i:i<\ell\rangle}^+$. As usual, 
for a family $\mathcal G$ of set,
 $\mathcal  G^+$ denotes the family of sets that meet each member of
 $\mathcal G$.
\end{definition}

\begin{lemma} For each $(s,\langle s_i: i<n\rangle, \dot A)\in \mathcal A$, 
the object  $(s,\langle s_i:i<n\rangle , \dot
A^{(\omega_1)})$ is in the list $\mathcal F$.
\end{lemma}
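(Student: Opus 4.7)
The plan is to exploit the maximality built into the construction of $\mathcal F$ via the Zorn's Lemma argument of the previous theorem. The core observations are that the sequential-closure operator $(\cdot)^{(\omega_1)}$ is idempotent, that $\dot A \subseteq \dot A^{(\omega_1)}$ preserves membership in the positive filter, and that any symmetric filter-family satisfying (1)--(6) must already equal $\mathcal F$.

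First I would verify that $(s,\langle s_i\rangle, \dot A^{(\omega_1)})$ is a formally eligible candidate. By Definition \ref{seql} and idempotence of the closure, $s \Vdash \dot A^{(\omega_1)} = (\dot A^{(\omega_1)})^{(\omega_1)} \subseteq Y^n$, giving clause (2); clause (1) is immediate from the fact that $(s,\langle s_i\rangle,\dot A)\in \mathcal A$ already gives $o(s)\ge o(s_0)$. Since $\dot A\subseteq \dot A^{(\omega_1)}$, we also have $s \Vdash \dot A^{(\omega_1)} \in \dot{\mathcal F}^+_{\langle s_i\rangle}$. I would then form the extended family $\mathcal F'$ by adjoining to $\mathcal F$ the triple $(s,\langle s_i\rangle, \dot A^{(\omega_1)})$ together with every transfer triple $(s',\langle s_i\rangle, (\dot A^{(\omega_1)})^s_{s'})$ for $s' \in S_{o(s)}$ demanded by clause (4). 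Conditions (1)--(4) then hold by construction. For (5), in each generic $g$ containing $s$, the preimage $\Pi^{-1}_{\langle s_i\rangle}(\val_g(\dot A^{(\omega_1)}))$ meets every finite intersection of existing filter generators indexed by members of $g$, and the coherence of $S$ together with the definition of the transfer propagates this to every branch $s'\oplus g$. For (6), $\val_g(\dot A^{(\omega_1)})$ is an $S$-sequentially closed subset of $Y^n$ in the positive part of the maximal filter produced by $\mathcal F$ in $V[g]$, so by maximality it already belongs to that filter; hence the induced maximal filters are unchanged by the adjunction.

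Once $\mathcal F'$ is shown to remain a symmetric filter-family satisfying (1)--(6), the maximality of $\mathcal F$ from the Zorn's Lemma argument forces $\mathcal F' = \mathcal F$, and in particular the desired triple is an element of $\mathcal F$. The principal obstacle will be the transfer-compatibility at (4)--(5): one has to argue that $(\dot A^{(\omega_1)})^s_{s'}$ is forced to coincide with $(\dot A^s_{s'})^{(\omega_1)}$, so that adjoining a single transfer genuinely yields an $S$-sequentially closed set on the branch $s'\oplus g$ and no further iterated transfers are generated. This should follow because the hierarchy of Definition \ref{seql} is defined entirely in the ground model and the transfer operation acts on names purely through the $\oplus$-operation on branches of the coherent Souslin tree $S$, but it is the only point in the plan requiring more than routine verification.
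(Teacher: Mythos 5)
Your argument is correct and is essentially the intended one: the paper states this lemma without proof precisely because it is meant to follow from the maximality of $\mathcal F$ in the Zorn's Lemma construction together with clause (6), which is exactly the mechanism you invoke (adjoin the sequentially closed, positive set and its transfers, check that (1)--(5) survive, and conclude from maximality that nothing new was added). The one step you single out as needing care --- that the $(s,s')$-transfer commutes with the closure operator because the hierarchy $B^{(\alpha)}$ is computed in the ground model from countable witnessing sets $B$ --- is indeed the only verification of substance, and your reasoning for it is sound.
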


In this next Lemma it is crucial that there are no dots on the sequence
 $\langle y^M(s) : s\in S_\delta\rangle$.  The significance of there
 being no dots is that, regardless of the generic $g\subset S$, 
 we will have that $e_g(y^M(g\cap M))$ is the limit of the same
 sequence from within $M$.

\begin{lemma} Suppose that
 $M\prec H(\kappa)$ (for suitably big $\kappa$) is a
  countable elementary submodel\label{Melement} 
  containing $Y,  \mathcal
  A$. Let $M\cap \omega_1 = \delta$.
There is a sequence $\langle  y^M(s) : s\in S_\delta\rangle$ such
that for every $(\bar s,\{s_i : i<n\}, \dot A)\in \mathcal A\cap M$,
and every $s \in S_\delta$ with $\bar s < s$, there is a
 $B\subset Y^n\cap M$ such that 
$ \langle  y^M(s_i \oplus s) : i< n\rangle \in
B^{(\delta+1)}$ and
$s\forces B\subset \dot A$.
\end{lemma}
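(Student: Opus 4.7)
The plan is a diagonal construction inside the countable model $M$, processing the (countably many) triples in $\mathcal{A}\cap M$ one at a time. Enumerate $\mathcal{A}\cap M$ as $\{(\bar t^k,\vec t^k,\dot A^k):k\in\omega\}$ with each triple repeated infinitely often, and fix a strictly increasing sequence $\delta_0<\delta_1<\cdots$ in $\delta\cap M$, cofinal in $\delta$, with $o(\bar t^k)\le\delta_k$ for all $k$. I will build, simultaneously for every $\tau\in S_\delta$, an element $y^M(\tau)\in Y$ whose underlying tree $T^\tau$ has rank at most $\delta+1$ and contains, for each $k$, a designated subtree of finite rank whose leaf values encode a witness sequence for constraint $k$. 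Since $S_\delta$ is countable (as $S$ is an $\omega_1$-tree) but no element of $S_\delta$ lies in $M$, the $y^M(\tau)$ are built uniformly in $\tau$ using only parameters from $M$ together with $\tau$ itself, accessed via the operation $s\oplus\tau$.

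For the recursion at stage $k$, the key input is the lemma preceding this one: it gives $(\bar t^k,\vec t^k,(\dot A^k)^{(\omega_1)})\in\mathcal{F}$, so $\bar t^k$ (and hence every $\tau\in S_\delta$ with $\bar t^k<\tau$) forces $(\dot A^k)^{(\omega_1)}$ into the maximal filter $\dot{\mathcal{F}}_{\vec t^k}$. I would use elementarity of $M$ together with the maximality of that filter to extract, inside $M\cap Y^{n_k}$, a sequence $\langle \vec b^k_j:j\in\omega\rangle$ that $Y$-converges in the sense of Definition \ref{seql}, with limit tuple whose $S$-interpretation $\bar t^k$ forces into $(\dot A^k)^{(\omega_1)}$. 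The values from this sequence are installed at the leaves of the rank-$k$ subtrees of $T^{t^k_i\oplus\tau}$ for each $i<n_k$ and each relevant $\tau\in S_\delta$, so that $\langle y^M(t^k_i\oplus\tau):i<n_k\rangle$ becomes the $Y$-limit of $\langle \vec b^k_j:j\in\omega\rangle$. Taking $B\in M$ to be (a suitable downward/sequential closure of) $\{\vec b^k_j:j\in\omega\}$ together with base-level choices from earlier stages then yields $\tau\Vdash B\subseteq\dot A^k$ and $\langle y^M(t^k_i\oplus\tau):i<n_k\rangle\in B^{(\delta+1)}$, as required.

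The principal obstacle is coherence across the different branches $\tau\in S_\delta$: a single witness chosen at stage $k$ must simultaneously validate the $k$-th constraint for every $\tau\in S_\delta$ extending $\bar t^k$, and the overlapping tuples $\langle t^k_i\oplus\tau\rangle_{i<n_k}$ create nontrivial correlations as $\tau$ varies. This is precisely what the symmetric structure of $\mathcal{F}$ was designed to accommodate: the transfer $(\dot A^k)^{\bar t^k}_s$ lies in $\mathcal{F}$ for every $s$ of level $o(\bar t^k)$ by condition (4) of the $\mathcal{F}$ theorem, and the interplay of (4) with (6), combined with coherence of $S$ (i.e.\ $s\oplus g$ remaining a generic branch for every $s$), lets one $Y$-convergent witness inside $M$ play its role along every relevant branch. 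The decisive feature making coordination possible is the one the paper highlights: the values $y^M(\tau)(t)$ are actual ground-model ordinals in $\omega_1$, not $S$-names, so a single choice of leaf values determines them uniformly in every generic extension. The countable ordinal $\delta+1$ leaves ample rank for the closure operator $B^{(\cdot)}$ to absorb the $\omega$-many successive stages.
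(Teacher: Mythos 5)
There is a genuine gap, and it sits exactly where the real work of this lemma is. Your architecture assigns to each constraint $k$ its own ``designated subtree'' of $T^{t^k_i\oplus\tau}$ in which witnesses for $\dot A^k$ are installed. But membership of $\vec y=\langle y^M(s_i\oplus s):i<n\rangle$ in $B^{(\delta+1)}$ is, by Definition \ref{seql}, a statement about \emph{all} immediate successors of the root: there must be one sequence $\langle\vec b_k:k\in\omega\rangle$ of elements of $B^{(\delta)}$ such that for \emph{every} $k$ the $k$-th immediate subtree of each coordinate of $\vec y$ is equivalent to the corresponding coordinate of $\vec b_k$. Since the sets $B$ for distinct constraints are distinct, and $s\Vdash B\subset\dot A$ must hold for each constraint separately (so they cannot be unioned), a tree whose different immediate subtrees serve different constraints witnesses membership in no single $B^{(\delta+1)}$. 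The repair is the paper's construction: at stage $m$ one chooses a \emph{single} tuple $\vec y_m\in Y^\Sigma\cap M$, where $\Sigma$ is the finite set of restrictions $(s^j_i\oplus s^\delta_0)\restriction\beta_m$, lying simultaneously in the pullbacks of $((\dot A_j)^{s^\delta_i}_{s^\delta_0})^{(<\omega_1)}$ for all constraints $j<L_m$ processed so far. This is where the transfers and the directedness of the symmetric product filter $\dot{\mathcal F}_\Sigma$ (your conditions (4)--(6)) are actually used --- not merely to ``accommodate coherence across branches'' but to produce the one interleaved witness sequence; each constraint then sees a cofinite tail of the single sequence $\langle\vec y_m:m\in\omega\rangle$ inside its own $B_j^{(<\delta)}$.

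You also omit the final step that makes the $y^M(s)$ elements of $Y$ at all: having chosen the $\vec y_m$, one must pass to an infinite $L\subset\omega$ on which, for all $i,j$, the sequences $\{\vec y_m(s^\delta_i\restriction\alpha_m\oplus s^\delta_j):m\in L\}$ are $S$-converging (the ccc extraction argument from the beginning of \S 3.1), and only then define $y^M(s^\delta_i)$ as the limit of that $S$-converging sequence. Without this, the assembled function need not be forced by $1$ to extend continuously, hence need not lie in $Y$, and the rank-$(\delta+1)$ bookkeeping has nothing to attach to. Your closing observation that the leaf values are ground-model ordinals rather than names is correct and is indeed the point the paper emphasizes, but it does not substitute for these two steps.
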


\begin{proof}
Let $\{ (s^m,\{s^m_i : i<n_m\} , \dot A_m ):  m\in \omega\}$ enumerate
the family $\mathcal A\cap M$. Also, fix an enumeration, $\{
s^\delta_m : m \in \omega\}$, of $S_\delta$. Let $\{\alpha_m : m\in
\omega\}$ be an increasing cofinal sequence in $\delta$. 
At stage $m$, we let $\beta_m$ be large enough
so that $s_0^\delta
\restriction [\beta_m, \delta) = s^\delta_i \restriction [\beta_m,
\delta)$ for  
all $i< m$. Replace
 the list $\{ (s^j , \langle s^j_i : i< \ell_j\rangle, \dot A_j) :
 j<m\}$
with (abuse of notation of course) $\{ 
(s^j , \langle s^j_i : i< \ell_j\rangle, \dot A_j) : j<L_m\}$ 
so that for all $i,j<m$ with $s^j< s^\delta_i$ (from original list) the new list
includes $(s^\delta_i\restriction\beta_m , \langle s^j_i \oplus
s^\delta_i \restriction \beta_m :   
i<\ell_j\rangle, \dot A_j)$; and so that for all $j<L_m$ in the new list
 $s^j$ and $s^j_i$ are all in $ S_{\beta_m}$. Nothing else is added to the new list
 (in particular, the new list is contained in $\mathcal A\cap M$).
 
 Now we have $s^j_k  \oplus s^\delta_0 = s^j_k \oplus s^\delta_i$ for
 all $i<m$, $j<L_m$ 
 and suitable $k<\ell_j$. Also, whenever $s^j< s^\delta_i$
 ($s^\delta_i$  
 for $i<m$ is unique) we have that
 $s^\delta_i\forces \dot A_j\in \dot {\mathcal F}^+_{\langle s^j_i : i<
   \ell_j\rangle}$; 
 and so we also have that
  $s^\delta_0\forces (\dot A_j)^{s_i^\delta}_{s^\delta_0} 
  \in \dot {\mathcal F}^+_{\langle s^j_i : i< \ell_j\rangle}$ 
  (because they are essentially the same sets).

  Let $\Sigma = \{\sigma_k : k\in K\}$ lex-enumerate $
  \{ (s^j_i
\oplus s^\delta_0)\restriction \beta_m : j<L_m, i<\ell_j\}\in M$. 
Let
   $\Pi_\Sigma $ be the projection map from $Y^{bS}$ onto $Y^\Sigma$.
 Let  $\Pi^\Sigma_{\langle s^j_i : i< \ell_j\rangle}$ be defined
by the equation $\Pi^\Sigma_{\langle s^j_i : i< \ell_j\rangle} \circ \Pi_\Sigma
=
\Pi_{\langle s^j_i : i< \ell\rangle}$. 
We consider the filter (name) $\dot {\mathcal F}_{\Sigma}$. For each
$j<L_m$ and $i<m$ such that $s^j<s^\delta_i$,
 it is forced
 by $s^\delta_0$ that the set $\left(
  \Pi_{\langle s^j_i 
    : i< \ell_j\rangle}^\Sigma\right)^{-1}((\dot
A_j)^{s^\delta_i}_{s^\delta_0})^{(<\omega_1)})$ is 
a member of $\dot {\mathcal F}_{\Sigma}$ and all are in $M$. Select
any
 $\vec y_m \in M\cap Y^\Sigma$ with the property that
 $\Pi^\Sigma_{\langle s^j_i : i< \ell_j\rangle}(\vec y_m)\in \dot
 A_j^{(<\omega_1)}$ for all $j< L_m$. 
 Choose a sequence $\{ B_j : j<L_m\}$ of countable subsets of 
  $Y^{<\omega}$ (in fact $B_j\subset Y^{\ell_j}$)
  which are in $M$ and satisfy that, for each $j< L_m$,
 $s^\delta_0\forces B_j\subset  
  (\dot A_j)^{s_i}_{s_0}$ (where $s^j<s^\delta_i$), and so that
$\Pi^\Sigma_{\langle s^j_i : i< \ell_j\rangle}(\vec y_m)\in
B_j^{(<\delta)}$.
Note that if $s^j<s^\delta_i$, then $s^\delta_i \forces
 B_j\subset \dot A_j$.
 
    If we now return to the ``original'' list, we 
have that for all $i,j<m$ and $s^j<s^\delta_i$, 
 $s^\delta_i \forces 
    \vec y_m\restriction 
   \langle s^j_k\oplus s^\delta_0 \restriction \beta_m : k<\ell_j\rangle \in 
    B_j^{(<\delta)}$.

Now suppose 
 we have so chosen $\vec y_m$ for each $m\in \omega$.
 We assert the existence of an infinite set $L\subset \omega$ with
the property that for all $j,i\in \omega$, $s^\delta_i$ forces that
the sequence $\{  \vec y_m(s^\delta_i\restriction \alpha_m\oplus 
s^\delta_j) : 
m\in L\}$ is defined and $S$-converging on a cofinite set.
For each $i$, $ y^M(s^\delta_i)$ is the $S$-name in $Y$ 
which is   equal to the limit  of this $S$-converging sequence.
\end{proof}

\subsection{$S$-preserving proper forcing}

Now we are ready to define our poset $\mathcal P$.  Recall that
we have a fixed assignment $\{ \dot U(x,n) : x\in \theta, n\in \omega\}$
of $S$-names of  neighborhoods (regular descending for each $x$).

\begin{definition}
 A condition $p\in \mathcal P$ consists of $({\mathcal M}_p, S_p, m_p)$
 where $\mathcal M_p$ is a finite $\in$-chain of countable elementary 
submodels of 
 $(H(\kappa), \{ \dot U(x,m): x\in \theta, m\in \omega\})
$ for some suitable $\kappa$. We let $M_p$
denote the maximal
 element of $ {\mathcal M}_p$ and let
 $\delta_p = M_p\cap \omega_1$. We require that 
$m_p$ is a positive integer and
 $S_p$ is a finite subset of $S_{\delta_p}$. 
 For $s\in S_p$
  and $M\in \mathcal M_p$, we use both 
$s\restriction M$ and $s\cap M$ to
  denote $s\restriction (M\cap \omega_1)$.
We require that the sequence $\{ y^M(s) : s\in S_{M\cap \omega_1}\}$
is in each $M'$
 whenever $M\in M'$ are both in $\mathcal M_p$. This can be made
 automatic if we use a fixed well-ordering of $H(\mathfrak c)$ and
 define
 $\{ y^M(s) : s\in S_{M\cap \omega_1}\}$ to be the minimal sequence
satisfying Lemma \ref{Melement}.

 It is helpful to simultaneously think of $S_p$ as inducing a finite
 subtree,
 $S_p^\downarrow$,
 of $S$ equal to $\{ s\restriction M : s\in S_p, \ \mbox{and}\ 
  M\in \mathcal M_p\}$.

  For each $s\in S_p$ and each $M\in {\mathcal M}_p\setminus M_p$
   we define an $S$-name
   $\dot W_p(s\restriction M)$ of a neighborhood of $e(y^M(s\restriction M))$.
   It is defined as the name of the
 intersection of all sets of the form $\dot U(e(y^{M'}(
s'\restriction M')),m_p)$ 
   where $s'\in S_p$, $M'\in \mathcal M_p\cap M_p$, and
$s\restriction M\subset s'\restriction M'$
    and $e(
    y^M(s\restriction M)) \in \dot U(e(y^{M'}(s'\restriction
    M')),m_p)$. We adopt 
   the convention that $\dot W_p(s\cap M)$ is all of $X$ if
 $s\cap M\notin S_p^\downarrow$.
  \medskip
  
  The definition of $p< q$ is that $\mathcal M_q\subset \mathcal M_p$,
  $m_q\leq m_p$,  
   $S_q \subset S_p^\downarrow$ 
and   for
   each $s'\in S_p$ 
   and 
  $s\in S_q$,  
   we have that $s'$ forces that
   $e( y^M(s\restriction M))\in \dot W_q(s\restriction M')$
   whenever $M\in \mathcal M_p\setminus \mathcal M_q$
and   $M'$ is the minimal member of $M_q\cap (\mathcal M_q\setminus M)$.

It is a notational convenience, and worth noting,
that we make no requirements on sets of
the form $\dot U(s, m_q)$ for $s\in S_q$.
\end{definition}

Before we develop the important properties of $\mathcal P$ let us
check that

\begin{proposition}
If $\mathcal P$ is $S$-preserving, then PFA(S) implies that
 $S$ forces\label{3.9}
  that $\dot X$
\begin{enumerate}
\item contains a  free $\omega_1$-sequence, and
\item if $\dot X$ is first countable, contains a copy of $\omega_1$.
\end{enumerate}
\end{proposition}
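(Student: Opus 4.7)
The plan is to apply PFA(S) to $\mathcal P$ along an $\aleph_1$-sized family of dense open sets, extract the generic filter $\mathcal G$, and use the data encoded in $\mathcal G$ to define the candidate sequence. The essential dense sets are: (a) $D_\alpha=\{p:\delta_p>\alpha\}$ for each $\alpha<\omega_1$, dense by adjoining a new countable elementary submodel above $M_p$---Lemma \ref{Melement} supplies the needed sequence $\{y^N(s):s\in S_{N\cap\omega_1}\}$ in the new model, and the ordering conditions for the new top model are vacuous since it lies above every prior model; (b) $E_n=\{p:m_p\geq n\}$ for each $n\in\omega$, trivially dense; and (c) $F_s=\{p:s\in S_p^\downarrow\}$ for each $s\in S$, dense by first using (a) to push $\delta_p$ past $\operatorname{dom}(s)$ and then choosing an extension of $s$ at level $\delta_p$ to enter $S_p$.

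A filter $\mathcal G$ meeting all these sets yields a cub $C=\{\delta_p:p\in\mathcal G\}\subseteq\omega_1$ together with canonical models $M_\delta$ for $\delta\in C$. In the $S$-generic extension by the branch $g$, define $x_\delta=e_g(y^{M_\delta}(g\cap S_\delta))$ for each $\delta\in C$ and re-index $C$ as $\omega_1$ to produce $\langle x_\alpha:\alpha<\omega_1\rangle$. By construction of $Y$ each $x_\delta$ lies in the sequential closure of $\omega_1$ in $\dot X$, hence in the fixed open set $\mathcal W$ inside which $\omega_1$ has no complete accumulation point.

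To verify freeness, fix $\alpha\in C$ and $q\in\mathcal G$ with $\delta_q=\alpha$. For any $\delta>\alpha$ in $C$, a further extension $p\leq q$ in $\mathcal G$ with $M_\delta,M_{\delta'}\in\mathcal M_p$ for some $\delta'>\delta$ also in $C$ makes $M_\delta$ an ``interior'' new model of $\mathcal M_p$ over an intermediate condition, and the ordering clause pins the corresponding $e(y^{M_\delta}(\cdot))$ inside a neighborhood $\dot W_{\tilde q}(\cdot)$ defined from models strictly below $\delta'$. Iterating and using the coherence of $S$ (via $s\mapsto s\oplus g$ and the transfer names $(\dot A)^s_{s'}$), one argues that any common limit point of $\{x_\beta:\beta<\alpha\}$ and $\{x_\delta:\delta\geq\alpha\}$ would produce a complete accumulation point of $\omega_1$ in $\mathcal W$, contradicting the setup. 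For part (2), first countability means that $\{\dot U(x,n):n\in\omega\}$ is a neighborhood base at each $x$; as $m_p\to\infty$ along $\mathcal G$ the sets $\dot W_p$ shrink to local bases at the corresponding points, and the same argument then yields genuine convergence from below at every limit point of $C$, i.e., a homeomorphic copy of the ordinal space $\omega_1$.

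The main obstacle is translating the finitary combinatorial pinning provided by the ordering on $\mathcal P$ into the infinitary topological separation needed for freeness. This is where the architecture of the symmetric filter $\mathcal F$, the canonical names from Lemma \ref{Melement}, and the coherent Souslin-tree structure must work together: the ordering only bounds ``higher-level'' points within neighborhoods of ``lower-level'' ones, so one must iterate through many conditions in $\mathcal G$ and use the branch-propagation mechanism $s\mapsto s\oplus g$ to lift local statements to the full product $Y^{bS}$ and back. A subsidiary but essential point is verifying properness of $\mathcal P$ itself, where the standard $M$-generic condition construction must be synchronized with the sequences of Lemma \ref{Melement}.
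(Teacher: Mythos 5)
Your overall architecture---apply PFA(S) to $\mathcal P$, extract a cub $C$ of models from the generic filter, and set $x_\delta=e(y^{M_\delta}(g\cap M_\delta))$---is the same as the paper's. The gap is in the verification of freeness, which is the entire content of the proposition. You propose to derive a contradiction by showing that a common limit point of $\{x_\beta:\beta<\alpha\}$ and $\{x_\delta:\delta\geq\alpha\}$ would be a complete accumulation point of $\omega_1$ in $\mathcal W$. That implication is not justified and does not hold: a neighbourhood of such a limit point need only contain a single $x_\delta$ with $\delta\geq\alpha$, which makes its trace on $\omega_1$ unbounded in $\delta$ but still countable; only a complete accumulation point of the whole set $\{x_\gamma:\gamma\in C\}$ would give an uncountable trace. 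The actual separation is a direct two-sided neighbourhood argument using two ingredients your sketch never invokes: (i) the tails $\bigl(\{e^{-1}(\xi):\xi>\beta\}\bigr)^{(\omega_1)}$ belong to the filter $\mathcal F_0$, and by Lemma \ref{Melement} each $x_\delta$ lies in the closure of $e[F\cap M_\delta]$ for every such $F$ in $M_\delta$, hence in the closure of every tail of $\omega_1$ below $\delta$; and (ii) $\dot U(x,0)\cap\omega_1$ was arranged to be countable, so $U(x_\gamma,0)\cap\omega_1$ is bounded by some $\beta\in M_{\gamma+1}$, whence $x_\delta\notin U(x_\gamma,1)$ for all $\delta>\gamma$ and the tail misses the open set $U(x_\gamma,1)$. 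Combined with the containment $\{x_\alpha:\beta<\alpha\leq\gamma\}\subseteq U(x_\gamma,m)$---which comes from the ordering on $\mathcal P$ via dense sets, and which your $D_\alpha$, $E_n$, $F_s$ alone do not deliver; the paper's sets $D(\beta,\alpha,s,m)$ carry an extra disjunctive clause precisely to pin the points of interior models into $\dot U(s\cap M_\gamma,m)$---this gives disjoint closures of initial segment and tail. Without (i) and (ii) there is no separation of the tail from $x_\gamma$: ``iterating the ordering clause'' cannot substitute for them, since the ordering only pushes new points \emph{into} neighbourhoods of old ones, never away from them.

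A second omission is the separable case. There $\mathcal P$ is defined only in the forcing extension by $2^{<\omega_1}$ (that is where the uncountable set with no complete accumulation point in $\mathcal W$ was produced), so PFA(S) must be applied to the iteration $2^{<\omega_1}*\dot{\mathcal P}$, and one needs Miyamoto's preservation theorem that this iteration is again proper and $S$-preserving. Your proposal treats only the non-separable configuration. (Properness of $\mathcal P$ itself, which you flag as ``subsidiary,'' is indeed needed to invoke PFA(S) at all, but it is proved separately in Theorem \ref{ppithm} and is not part of this proposition.)
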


\begin{proof}
Let us first consider the easier non-separable case.

For any condition $q\in \mathcal P$, let $\mathcal M(q)$ denote the
collection of all $M$ such that there exists a $p<q$ such 
that $M\in \mathcal M_p$. 
For each $\beta < \alpha\in \omega_1$, $s\in S_\alpha$,
and $m\in \omega$, let
\begin{align*}
D(\beta,\alpha,s,m)  =& \{ p\in \mathcal P : (\exists \bar s\in S_p )~~ 
s < \bar s, m < m_p, \mbox{and}  \\
& (\exists M\in \mathcal M_p)~ (\beta\in M, \alpha\notin M)\ \  \mbox{or}\\ 
& (\forall M\in \mathcal M(p)) (\beta\in M \Rightarrow \alpha\in M)\}~.
\end{align*}
It is easily shown that each $D(\beta,\alpha,s,m)$ is a dense open
subset of $\mathcal P$.
Consider the family 
 $\mathcal D$ of all such $D(\beta,\alpha,s,m)$, and
let $G$ be a $\mathcal D$-generic filter.
Let  $\mathcal M_G = \{ M : (\exists p\in G)~  M \in
 \mathcal M_{p}\}$ 
and let $C = \{ M\cap \omega_1 : M\in \mathcal M_G\}$.
 Let $g\subset S$ be a
 generic filter.
For each $\gamma\in C$ and $M\in \mathcal M_G$ with $M\cap\omega_1 =
\gamma$,  let $x_\gamma = e(y^{M}(g\cap M))$ (we omit the trivial
proof that there is exactly one such $M$ for each $\gamma\in C$).

 We show that the set 
 $W = \{ x_\gamma : \gamma \in C\}$ contains an uncountable free
 sequence of $X$, and that, if $X$ is first countable, $W$
 is homeomorphic to the
 ordinal $\omega_1$. 
If $\gamma<\delta$ are both in $C$ then $x_\gamma$
and $x_\delta$ are distinct. To see this, let us note that
since  $x_\gamma\in M_{\delta}$ there is a $\beta\in
M_\delta$ such that $U(x_\gamma,0)\cap \omega_1\subset \beta$. 
Also,  the closure of
$U(x_\gamma,1)$ was assumed to be contained in $U(x_\gamma,0)$. 
But now, $x_\delta = e(y^{M_\delta}(g\cap M_\delta))$ was chosen so as
to be in the closure of $e[\dot F\cap M_\delta]$ for each $\dot F\in
M_\delta$. In particular, $x_\delta$ is in the closure of
$\omega_1\setminus \beta$, and so it is not in $U(x_\gamma,1)$. 

We may now define the map $f$ sending $x_\gamma$ to the
 ordinal $o.t.(C\cap \gamma)$  (the order type).
It is certainly 1-to-1 and onto. Let $\{ \xi_n : n\in \omega\}\subset
\omega_1$ be  strictly increasing with supremum $\xi$. For each $n$,
let $f(x_{\gamma_n}) = \xi_n$ and $f(x_\gamma) = \xi$. Fix any $m\in
\omega$,
 set $s=g\cap S_\gamma$ and choose
any $p\in G\cap D(0,\gamma,s,m)$. Since $\gamma\in C$, we may assume
by extending $p$, that there is an $M_\gamma \in \mathcal M_p$ with 
 $M_\gamma\cap \omega_1 = \gamma$. Let $\beta$ be the maximum element
 of $\{ M\cap \omega_1 : M\in \mathcal M_p\cap M_\gamma\}$. We have
that $f(x_\beta)<\xi$ and so there is an $n_0$ such that 
 $\xi_n > f(x_\beta)$ for all $n>n_0$. Now for any $r<p$ with $r\in G$, 
and $M\in \mathcal M_r$ with $\beta < M\cap \omega_1  < \gamma$
 we have that $s\Vdash e(y^M(s\cap M)) \in \dot U(s\cap M_\gamma,
 m)$. From this it follows that $x_{\gamma_n} \in U(x_\gamma,m)$ for
all $n>n_0$. This shows that each limit point of $\{ x_{\gamma_n}
:n\in \omega\}$ is in $U(x_\gamma,m)$. In fact more is true:
 the set $\{ x_\alpha : \beta <\alpha < \gamma\}$ is
contained in $U(x_\gamma,m)$ and, because 
$U(x_\gamma,0)$
is  in $M_{\gamma+1}$, 
 $U(x_\gamma,0)\cap \{x_\alpha : \gamma < \alpha <\omega_1\}$ is
 empty. This shows that in all of $X$,
   $\{ x_\alpha : \beta < \alpha \leq \gamma \}$ and
$\{ x_\alpha  : \gamma<\alpha\}$ have disjoint closures.
Since $\gamma$ was an arbitrary member of $C$, this shows
that the closure of the full initial segment
 $\{ x_\alpha : \alpha \leq \gamma\}$ is disjoint from 
the closure of
$\{ x_\alpha  : \gamma<\alpha\}$.

Now we assume that $X$ is first countable. We have just given
 a proof using sequences that,
since $\omega_1$ is also first countable, 
 the inverse map, $f^{-1}$, is continuous. 
Since $\dot X$ is forced to be Hausdorff, $f^{-1}$ is also a
homeomorphism. 
\bigskip

Now we consider the separable case.
We are working in the forcing extension by $2^{<\omega_1}$.
For each
 $\alpha\in \omega_1$, let $E_\alpha$ denote the dense open subset of
 $2^{<\omega_1}$ of conditions that decide which member of $\dot
 X$ is equal to the chosen ordinal $\alpha$ of the copy of $\omega_1$
that is forced to have no complete accumulation points
 in $\mathcal W$. In particular,
for each $x\in \theta$ and $n\in \omega$,
 let $E(x,n)$ denote the dense open set of conditions in $2^{<\omega_1}$
 that decide
on the value of the name $\dot U(x,n)$ and that decide the countable
set $\dot U(x,n)\cap \omega_1$. Fix a $2^{<\omega_1}$-name, 
 $\dot {\mathcal P}$ for our poset
$\mathcal P$ as defined above.
We are assuming that 
 $2^{<\omega_1}$ forces that  $\dot{\mathcal P}$ 
is proper and $S$-preserving.  By 
\cite{Tadatoshi} (
and see \cite[4.1]{Todorcevic}) it follows that the iteration
 $2^{<\omega_1} * \dot {\mathcal P}$ is proper and $S$-preserving.
 The rest of the proof proceeds
 just as in the non-separable case.
\end{proof}

We prove a kind of density lemma.

\begin{lemma}
If $\mathcal P\in M$ for some countable $M\prec H(\mu)$, then
 for each $p\in \mathcal P$ and each $\alpha \in M\cap \omega_1$, 
such that $M\cap H(\kappa)\in \mathcal M_p$, there is an\label{singleM} 
 $M'\in M$ such that $\alpha\in M'$, 
 $r= (\mathcal M_p\cup\{M'\} , S_p, m_p)\in
\mathcal P$,  and $r<p$.
\end{lemma}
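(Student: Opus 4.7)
Write $\hat M := M \cap H(\kappa)$. Inspection of the ordering on $\mathcal P$ shows that $\dot W_q(s\restriction M')$ is only defined when $M' = M_q$, so a new submodel can only be inserted into $\mathcal M_p$ directly below the current maximum. Accordingly, the plan forces $\hat M = M_p$ (the only case in which the lemma has content), with $M'$ to be inserted just below $\hat M$. Then $M'$ must satisfy $M' \in \hat M \subset M$, $\alpha \in M'$, $\mathcal M_p \setminus \{\hat M\} \subset M'$, and the forced neighborhood condition
\[
s' \Vdash e(y^{M'}(s \cap M')) \in \dot W_p(s \cap \hat M) \qquad (s',s \in S_p).
\]
The defining data of the finite family $\{\dot W_p(s \cap \hat M) : s \in S_p\}$ consists of elements of $\mathcal M_p \cap \hat M$, traces $s \cap \hat M$, and the integer $m_p$, each of which lies in $\hat M \subset M$; moreover each $\dot W_p(s \cap \hat M)$ is decided at level ${<}\delta := \hat M\cap \omega_1$, so its truth value at $s' \in S_\delta$ is already determined by $s' \cap \hat M \in M$.

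Applying Lemma~\ref{Melement} to $\hat M$, the canonical sequence $\langle y^{\hat M}(s) : s \in S_\delta\rangle$ is realized as the coordinatewise $S$-limit, along some infinite $L \subset \omega$, of witness tuples $\vec y_m \in \hat M$. Each point $e(y^{\hat M}(s \cap \hat M))$ is forced into $\dot W_p(s \cap \hat M)$ by the very definition of $\dot W_p$. Applying Lemma~\ref{cohere} with the finite collection of conditions in $S_p$ as parameters, one extracts a single approximation $\vec y_m \in \hat M$ (for some sufficiently large $m \in L$) already satisfying, for each $s', s \in S_p$, the forced membership of its appropriate coordinate into $\dot W_p(s \cap \hat M)$ via $s'$.

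Next, inside $\hat M$ construct a countable elementary submodel $M' \prec H(\kappa)$ containing $\alpha$, every member of $\mathcal M_p \setminus \{\hat M\}$, the fixed well-ordering used in Lemma~\ref{Melement}, the parameters $Y$ and $\mathcal A$, the approximation $\vec y_m$, and enough of the construction data of Lemma~\ref{Melement} to guarantee that the canonical sequence $\langle y^{M'}(s) : s \in S_{M' \cap \omega_1}\rangle$ is built from witnesses each of which is $S$-forced to lie in the open set $\dot W_p(s \cap \hat M)$. Since each $\dot W_p(s \cap \hat M)$ is (forced to be) open and $y^{M'}$ is an $S$-limit of such witnesses, the required membership transfers to $y^{M'}$ itself. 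The existence of such an $M'$ is a first-order statement with parameters in $M$, so by elementarity of $M \prec H(\mu)$ we may take $M' \in M$; the remaining items of $r < p$ (that $\mathcal M_r$ is a valid $\in$-chain, $S_r \subset S_r^\downarrow$, $m_r \le m_p$) are then immediate.

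The principal difficulty is arranging that the canonical $y^{M'}$ (built from Lemma~\ref{Melement} applied to $M'$ itself) actually lands in the target neighborhoods. The apparent circularity of controlling $y^{M'}$ by conditions referring to $y^{M'}$ is broken by the $S$-convergent nature of the construction: a countable elementary submodel $M'$ that sees the ``good'' approximation $\vec y_m$ together with the ingredients of Lemma~\ref{Melement} must produce its own $y^{M'}$-limits inside the prescribed open neighborhoods, and elementarity of $M$ then delivers such an $M'$ inside $M$.
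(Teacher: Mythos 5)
Your overall skeleton is right---insert $M'$ directly below $\hat M=M\cap H(\kappa)$, observe that the only substantive requirement for $r<p$ is that each $s'\in S_p$ force $e(y^{M'}(s\cap M'))\in\dot W_p(s\cap\hat M)$, and try to find such an $M'$ inside $M$ by elementarity---but the step that actually produces such an $M'$ is not proved, and the mechanism you propose for it does not work. First, a factual error that undermines your elementarity step: $\dot W_p(s\cap\hat M)$ is an intersection of sets $\dot U(e(y^{M''}(s'\cap M'')),m_p)$ taken over non-maximal $M''\in\mathcal M_p$ with $s\cap\hat M\subset s'\cap M''$, i.e.\ over models \emph{at or above} $\hat M$ in the chain (including $\hat M$ itself), not below it; none of those canonical points, nor $\hat M$, is an element of $M$, so the family $\{\dot W_p(s\cap\hat M):s\in S_p\}$ is not a parameter available in $M$, it is not decided below $\delta$, and the statement ``there exists $M'$ landing in these neighbourhoods'' cannot simply be reflected into $M$. (Your opening claim that $\dot W_q(\cdot)$ is defined only for the maximal model, so that $\hat M=M_p$ is forced, is also backwards: it is defined exactly for the non-maximal models, and in the application inside Theorem \ref{ppithm} the model $\hat M$ sits strictly below several members of $\mathcal M_r$.) Second, the closing inference ``$\dot W_p(s\cap\hat M)$ is open and $y^{M'}$ is an $S$-limit of witnesses lying in it, hence $y^{M'}$ lies in it'' is invalid: a limit of points of an open set need only lie in its closure. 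Third, and most importantly, $y^{M'}$ is the \emph{canonical} ($\prec$-minimal) sequence attached to $M'$ by Lemma \ref{Melement}; it diagonalizes over all of $\mathcal A\cap M'$ and cannot be ``steered'' into prescribed neighbourhoods by placing one good approximation $\vec y_m$ into $M'$. You assert that the apparent circularity is broken, but give no argument; that assertion is precisely the content of the lemma.

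The paper runs the approximation in the opposite direction. It forms the $S$-name $\dot A\in M_0$ of all tuples $\langle y^{M_q}(s^q_i):i<\ell\rangle$ arising from candidate conditions $q<p\restriction M$ of the right shape, proves by a density-plus-elementarity argument that $s_0$ forces $\dot A$ to be $\dot{\mathcal F}^+_{\langle\bar s_i:i<\ell\rangle}$-positive (the point being that the canonical tuple of $M_0$ itself witnesses membership in every filter set from $M_0$, so $\dot A$ cannot be forced disjoint from one), and then applies Lemma \ref{Melement} to obtain a countable $B\subset M\cap Y^\ell$ forced into $\dot A$ with $\langle y^{M_0}(\bar s_i\oplus s_0):i<\ell\rangle\in B^{(\delta+1)}$. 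Since each $\dot W_p(s\cap\hat M)$ is, by its very definition, a neighbourhood of $e(y^{M_0}(s\cap\hat M))$, Lemma \ref{cohere} produces an element $\vec b\in B$ whose $e$-image is forced into the product of these neighbourhoods, and the condition $q$ witnessing $\vec b\in\dot A$ (available in $M$ by elementarity, since $\vec b$ and $\dot A$ are) supplies $M'=M_q$. In short: one does not control the construction of $y^{M'}$ at all; one finds, among the already-canonical tuples of smaller models, one approximating the canonical tuple of $\hat M$ closely enough to fall into its $\dot W_p$-neighbourhoods. The filter-positivity of $\dot A$, which your proposal never addresses, is what guarantees that such approximations exist.
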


\begin{proof}
Let $M_0 = M\cap H(\kappa)$ and
let $ S_0 = \{ s_i : i< \ell\}$ enumerate the set
 $\{ s \cap M : s\in S_p\}$ in the lexicographic order. In this proof
we adopt the convention that we will enumerate $S_q$ for any condition
$q$ in increasing lexicographic order.  Let $M^\dagger$ be the maximum
element of $\mathcal M_p\cap M$ and set $S^\dagger = \{ s\cap
M^\dagger : s\in S_p\}$. We define $p\restriction M$ to be
 $(\mathcal M_p\cap M, S^\dagger, m_p)$. It is routine to verify that
 $p< p\restriction M$.

 By increasing $\alpha$ we 
may assume that $ s_i \restriction [\alpha, M\cap \omega_1) 
 =  s_j \restriction [\alpha, M\cap \omega_1)$ for all 
$i,j<  \ell$ and that $\omega_1\cap 
\bigcup 
(\mathcal M_p\cap M) < \alpha$.
 For each $i< \ell$, let $\bar s_i =
s_i\restriction\alpha$ and  let
 $\bar S = \{ \bar s_i  : i< \ell\}$.

It is easily checked that $r = (\{M_0\} \cup  (\mathcal M_p \cap M)
, 
S_0, m_p)$ is in $\mathcal P$ and is an extension of $p\restriction M $. 
 Notice that this implies that
 $S_r$ is equal to $\bar S  \oplus s_0 = \{  \bar s_i\oplus  s_0 :
 i<\ell\}$. 

Define the $S$-name $\dot A$ as
\[   \{(s^q_0, \langle { y}^{M_q} (s^q_i) : i< \ell\rangle) :
q< p\restriction M , M_q\cap \mathcal M_q = \mathcal M_p\cap M, 
 m_q = m_p,   S_q = \bar S\oplus s^q_0 \}\]
This set $\dot A$ is a member of $M_0$ and, by virtue of $r$,
 $(s_0, 
\langle { y}^{M_0}(  s_i) : i< \ell\rangle)$ is an element
of $\dot A$. 
We show, by a density and elementary submodel argument, that
we have that
 $  s_0$ forces that $\dot A$ is in
 $\dot {\mathcal F}^+_{\langle \bar s_i:i<\ell \rangle}$.  
First of all, there is a dense subset of $S$ each member
of which decides the statement ``$(\exists F \in \dot{\mathcal
  F}_{\langle \bar s_i : i < \ell\rangle}) ~~(\dot A\cap F = \emptyset)$''.
Since this dense set is in $M_0$, there is an $\dot F\in M_0$ such that
 $s_0 \Vdash \dot F\in 
\dot {\mathcal F}_{\langle \bar s_i : i<
   \ell\rangle}
$, and either $s_0 \Vdash \dot A \in \dot {\mathcal
   F}_{\langle \bar s_i : i< \ell\rangle}$ or $s_0 \Vdash \dot A\cap
 \dot F = \emptyset$. However, it is clear that there is a $\beta\in
 M_0$ such that $s_0\restriction \beta \Vdash \dot F\in 
\dot {\mathcal F}_{\langle \bar s_i : i<
   \ell\rangle}$ and so, by Lemma \ref{Melement}, $\langle
 y^{M_0}(s_i) : i<\ell\rangle \in \dot F$.
It follows
 then that there is an $s\in M_0$ 
below $s_0$ which also forces that 
 $ \dot A$ is in
 $\dot {\mathcal F}_{\langle \bar s_i :i<\ell\rangle}^+$.   

Well, what all this proves is that 
 $(s,\{\bar s_i : i<\ell\}, \dot A)$
is a member of the collection $\mathcal A$ and is in $M\cap H(\kappa)$. 
 Apply Lemma  
\ref{Melement} and select $B\subset M\cap Y^\ell$ so that
 $s_0\Vdash B\subset \dot A$ and $s_0\Vdash
 \langle  y^M( \bar s_i \oplus s_0) : 
  i<\ell\rangle \in B^{(\delta+1)}$. 
   What this actually means
  (see Definition  \ref{seql})
   is that there is a $\langle \vec b_k : k\in \omega\rangle$
of elements of
 $B^{(\delta)}$ which is $S$-converging coordinatewise
to this element.  Now, by Lemma \ref{cohere}, there is a
$\vec b\in B$ satisfying that each $s\in S_p$ forces
that $e(\vec b) $ is in the product neighborhood
 $\dot W_p(s_0) \times \cdots\times\dot W_p(s_\ell)$.  This $\vec b\in B$
 is of course equal to 
$ \langle {y}^{M_q} (s^q_i) : i< \ell\rangle$ for
some $q$ as in the definition of $\dot A$.  It follows
that $ M_q$ is the desired value for $M'$.
\end{proof}

That was a warm-up. What we really should have proven is

\begin{lemma}
 For all $(s,r)\in S\times \mathcal P$ such that $s\notin M_r$, and
  any\label{forproper} $M_0\in \mathcal M_r$ and 
$(s^j,\langle s^j_i : i<\ell_j\rangle, \dot A_j)\in \mathcal A
 \cap M_0$ with $s^j<s$, there is an  $\vec a \in Y^{\ell_j}\cap 
M_0$ such that $s  \forces \vec a \in \dot A_j$ and for each 
$s'\in S_r$ and $i<\ell_j$,
  $s' \forces e(\vec a_i) \in \dot W_r( (s^j_i \oplus s)\cap M_0)$.
\end{lemma}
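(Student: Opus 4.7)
The plan is to run, at the submodel $M_0$, the same extraction argument that closes the proof of Lemma~\ref{singleM}, but applied to the given triple $(s^j,\langle s^j_i:i<\ell_j\rangle,\dot A_j)\in\mathcal{A}\cap M_0$ rather than to one constructed by hand. Set $\delta_0=M_0\cap\omega_1$. Since $s\notin M_r$ and $M_0\in\mathcal{M}_r$ we have $\dom(s)\geq\delta_0$, so $\bar s:=s\restriction\delta_0\in S_{\delta_0}$ is well defined. Because $s^j$ and each $s^j_i$ lie in $M_0$, their domains sit below $\delta_0$; in particular $s^j<\bar s$, and
\[
(s^j_i\oplus s)\cap M_0 \;=\; s^j_i\oplus\bar s \;\in\; S_{\delta_0}
\]
for every $i<\ell_j$.

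Next I would apply Lemma~\ref{Melement} to $M_0$ using the triple $(s^j,\langle s^j_i:i<\ell_j\rangle,\dot A_j)\in\mathcal{A}\cap M_0$ together with $\bar s\in S_{\delta_0}$ extending $s^j$. This produces a countable $B\subset Y^{\ell_j}\cap M_0$ with $\bar s\forces B\subset\dot A_j$ and, crucially, with $\vec y:=\langle y^{M_0}(s^j_i\oplus\bar s):i<\ell_j\rangle\in B^{(\delta_0+1)}$. Since $\bar s\subset s$, automatically $s\forces B\subset\dot A_j$, so the first required conclusion of the lemma holds for any $\vec a$ chosen from $B$.

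To secure the neighborhood condition, I would form the $S$-name
\[
\dot W \;=\; \prod_{i<\ell_j}\dot W_r\bigl((s^j_i\oplus s)\cap M_0\bigr),
\]
which is forced to be a neighborhood of $e(\vec y)$ in $X^{\ell_j}$ because each factor is, by construction, a name for a neighborhood of $e(y^{M_0}(s^j_i\oplus\bar s))=e(\vec y_i)$. Apply Lemma~\ref{cohere} with the finite set $S_r$ and the name $\dot W$; this produces $\vec a\in B$ such that every $s'\in S_r$ has an extension forcing $e(\vec a)\in\dot W$. Exactly as in the closing step of the proof of Lemma~\ref{singleM}, one then upgrades this to the statement that $s'$ itself forces $e(\vec a_i)\in\dot W_r((s^j_i\oplus s)\cap M_0)$ for every $s'\in S_r$ and $i<\ell_j$, using that each factor of $\dot W$ is an intersection of names of the form $\dot U(e(y^{M'}(s''\restriction M')),m_r)$ controlled at levels $M'\cap\omega_1<\delta_r$, combined with coherence of $S$. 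When $(s^j_i\oplus s)\cap M_0\notin S_r^{\downarrow}$, the conclusion for that $i$ is trivial, by the convention that $\dot W_r$ is then all of $X$.

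The main obstacle is the last upgrade step: Lemma~\ref{cohere} delivers only \emph{extensions} of each $s'\in S_r$ doing the forcing, whereas the downstream use of this lemma (in establishing that $\mathcal{P}$ is proper and $S$-preserving) requires $s'$ itself to force. Pushing the extensions back down to the nodes of $S_r$---exactly the pattern already used at the corresponding place in Lemma~\ref{singleM}---is the technical heart of the proof.
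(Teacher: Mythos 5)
Your proposal is correct and follows essentially the same route as the paper's own (very short) proof: set $s_0=s\cap M_0$, invoke Lemma~\ref{Melement} at $M_0$ with the given triple to get the countable $B\subset Y^{\ell_j}\cap M_0$ witnessing $\langle y^{M_0}(s^j_i\oplus s_0):i<\ell_j\rangle\in B^{(\delta_0+1)}$, and then apply Lemma~\ref{cohere} with the product of the names $\dot W_r((s^j_i\oplus s)\cap M_0)$ to extract $\vec a\in B$. The ``upgrade'' you flag as the technical heart is not carried out in the paper either --- its proof simply reads the conclusion of Lemma~\ref{cohere} as ``each $s'\in S_r$ forces'' rather than ``some extension of each $s'$ forces,'' exactly as at the corresponding step of Lemma~\ref{singleM} --- so your account matches the paper's argument in both substance and level of detail.
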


\begin{proof}
Let $s_0 = s\cap M_0$.
 By definition of $\mathcal A$, we have that $s^j  \forces
 \dot A_j\in \dot {\mathcal F}^+_{
 \langle s^j_i : i<\ell_j\rangle}$.  By Lemma \ref{Melement}, 
 there is a countable $B\subset M_0$ such that
  $s_0\forces B\subset \dot A_j$ and 
   $s_0\forces  \langle y^{M_0}(s^j_i\oplus s_0)   : i<\ell_j\rangle
   \in B^{(\delta_0+1)}$, where $\delta_0=M_0\cap \omega_1$.
 Apply Lemma \ref{cohere} to conclude there is a
$\vec b\in B$ satisfying that each $s\in S_r$ forces
that $e(\vec b) $ is in the product neighborhood
 $\dot W_r(s^j_0\oplus s_0) \times \cdots\times\dot W_r(s^j_{\ell-1}
 \oplus s_0)$.  
 \end{proof}

All we have to do now is to prove that

\begin{theorem}
 The poset\label{ppithm}
 $S\times \mathcal P$ is proper.
\end{theorem}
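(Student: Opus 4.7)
The plan is to show that for any sufficiently large regular $\mu$, any countable elementary submodel $M \prec H(\mu)$ containing $S$, $\mathcal{P}$, $\mathcal{A}$, and $\dot{\mathcal{F}}$, and any $(s^\dagger, q) \in S \times \mathcal{P}$ with $s^\dagger \notin M$ and $M_0 := M \cap H(\kappa) \in \mathcal{M}_q$, the pair $(s^\dagger, q)$ is $(M, S \times \mathcal{P})$-generic. Fix a dense open $D \in M$. Since Lemma \ref{singleM} lets us densely insert new models from $M$ above $M_0$, we may assume $(s^\dagger, q) \in D$. The task is to produce $(t, p) \in D \cap M$ compatible with $(s^\dagger, q)$.

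Enumerate $S_q = \{s_i : i < \ell\}$ in lex order and, using coherence of $S$, pick $\alpha \in M \cap \omega_1$ large enough that (i) $\alpha > \omega_1 \cap \bigcup(\mathcal{M}_q \cap M_0)$, and (ii) $s^\dagger \restriction [\alpha, \delta_q)$ agrees with every $s_i \restriction [\alpha, \delta_q)$. Write $\bar{s}_i = s_i \restriction \alpha$ and $\bar{s} = s^\dagger \restriction \alpha$. Now define an $S$-name $\dot{A} \in M_0$ consisting of all pairs $(t_0, \langle y^{M_{p'}}(s^{p'}_i) : i < \ell\rangle)$ for which there is a condition $p' \leq q \restriction M$ in $\mathcal{P}$ with $\mathcal{M}_{p'} \cap M_0 = \mathcal{M}_q \cap M_0$, $m_{p'} \geq m_q$, $S_{p'} = \{\bar{s}_i \oplus t_0 : i < \ell\}$, and some $t \supseteq t_0$ with $(t, p') \in D$. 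The triple $(\bar{s}, \langle \bar{s}_i : i<\ell\rangle, \dot{A})$ is the candidate for $\mathcal{A}$.

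The crucial step is to verify that $\bar{s}$ forces $\dot{A} \in \dot{\mathcal{F}}^+_{\langle \bar{s}_i : i<\ell\rangle}$, which then puts $(\bar{s}, \langle \bar{s}_i : i<\ell\rangle, \dot{A}^{(\omega_1)})$ into $\mathcal{A}$. Suppose toward contradiction there is $\dot{F} \in M_0$ with $\bar{s} \Vdash \dot{F} \in \dot{\mathcal{F}}_{\langle \bar{s}_i : i<\ell\rangle}$ and $\bar{s} \Vdash \dot{A} \cap \dot{F} = \emptyset$. By Lemma \ref{Melement} applied inside $M_0$, the tuple $\langle y^{M_0}(\bar{s}_i \oplus \bar{s}) : i<\ell\rangle$ lies in $\dot{F}$ as forced by $\bar{s}$. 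But this same tuple is witnessed to lie in $\dot{A}$ by the existing condition $q$ itself (with $t_0 = \bar{s}$ and $p'$ the ``$q$-part below $M_0$'' suitably adjusted, using that $(s^\dagger, q) \in D$), contradicting disjointness; the maximality clause (6) of $\mathcal{F}$ closes the argument. Now apply Lemma \ref{forproper} to $(s^\dagger, q)$, $M_0$, and $(\bar{s}, \langle \bar{s}_i : i<\ell\rangle, \dot{A}^{(\omega_1)}) \in \mathcal{A} \cap M_0$ to obtain $\vec{a} \in Y^\ell \cap M_0$ with $s^\dagger \Vdash \vec{a} \in \dot{A}$ and, for each $s' \in S_q$ and $i<\ell$, $s' \Vdash e(\vec{a}_i) \in \dot{W}_q((\bar{s}_i \oplus s^\dagger) \cap M_0)$.

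By elementarity and the definition of $\dot{A}$, $\vec{a} = \langle y^{M_{p'}}(s^{p'}_i) : i<\ell\rangle$ for some concrete $(t, p') \in D \cap M$; set $r = (\mathcal{M}_{p'} \cup \mathcal{M}_q, S_q, m_q)$. The $\in$-chain condition on $\mathcal{M}_r$ is immediate since $\mathcal{M}_{p'} \subset M$ and all models in $\mathcal{M}_q \setminus M$ contain $M_0$; the neighborhood requirements for $r \leq q$ across models in $\mathcal{M}_{p'} \setminus (\mathcal{M}_q \cap M_0)$ are delivered precisely by the $\dot{W}_q$-clauses from Lemma \ref{forproper}, while the requirements for $r \leq p'$ across models in $\mathcal{M}_q \setminus M_0$ are built into the choice of $q$ itself. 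Since $s^\dagger \supseteq t$, the pair $(s^\dagger, r)$ extends both $(s^\dagger, q)$ and $(t, p')$ in $S \times \mathcal{P}$. The main obstacle, as noted, is confirming that $\dot{A} \in \dot{\mathcal{F}}^+$: this requires carefully leveraging coherence of $S$ (via $\oplus$) together with the symmetry and maximality of the family $\mathcal{F}$ built in the previous subsection, so that the density of $D$ really does translate into positivity with respect to the filter on the tuple $\langle \bar{s}_i : i < \ell\rangle$ rather than merely some fiber. A secondary bookkeeping point is the uniform choice of $\alpha$ ensuring that the $\bar{s}_i$ form a genuine element of $S^{<\omega}$ compatible with the $\oplus$-operation used throughout.
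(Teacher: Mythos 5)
Your overall strategy --- reflect the condition into $M$ via an $S$-name $\dot A$ of tuples, verify that $\dot A$ is $\dot{\mathcal F}^+$-positive, and then use Lemma \ref{forproper} to pull out a reflected condition whose associated points land in the $\dot W_q$-neighbourhoods --- is the right opening move, but as written it only handles the case in which the reflected condition $p'$ has a \emph{single} new model above $\mathcal M_q\cap M_0$. That is the content of the warm-up Lemma \ref{singleM}, not of the theorem. After you extend $(s^\dagger,q)$ into $D$, the set $\mathcal M_q\setminus M$ will in general have size $\bar\ell>1$, and any $(t,p')\in D\cap M$ reflecting the shape of $(s^\dagger,q)$ (which it must, since the witness to non-emptiness of your $\dot A$ is $(s^\dagger,q)$ itself) will likewise have $\bar\ell$ models in $\mathcal M_{p'}\setminus(\mathcal M_q\cap M_0)$. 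Your name $\dot A$ records only the tuple $\langle y^{M_{p'}}(s^{p'}_i):i<\ell\rangle$ attached to the \emph{maximal} model of $p'$, so the single application of Lemma \ref{forproper} controls only that one tuple. But for the amalgam $r=(\mathcal M_{p'}\cup\mathcal M_q,S_q,m_q)$ to extend $q$, the definition of the order requires $s'\Vdash e(y^N(s\restriction N))\in\dot W_q(s\restriction M')$ for \emph{every} $N\in\mathcal M_{p'}\setminus\mathcal M_q$, including the intermediate models strictly between $\mathcal M_q\cap M_0$ and the top of $\mathcal M_{p'}$. Nothing in your construction constrains the points $e(y^N(\cdot))$ for those intermediate $N$, so the claim that the needed $\dot W_q$-clauses are ``delivered precisely by Lemma \ref{forproper}'' fails.

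This is exactly the difficulty the paper's proof is organized around. It fixes the whole isomorphism type of the condition via the relation $q\equiv r$ (``like''), and then defines, by reverse recursion on $k<\bar\ell$, names $\dot{\mathcal Y}_{k}$ and $\dot A(q,k)$ so that positivity with respect to $\dot{\mathcal F}^+_{\langle s^{q,k}_i : i<\ell_{q,k}\rangle}$ is demanded at \emph{each} level $k$; it then walks down from $s^\dagger$ to show that the given condition satisfies all $\bar\ell$ positivity requirements, and finally walks back up, applying Lemma \ref{forproper} once per level to choose the blocks $\vec y_0,\dots,\vec y_{\bar\ell-1}$ one at a time, each forced into the relevant $\dot W_r$-neighbourhoods. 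To repair your argument you would need to replace the single name $\dot A$ by this tower of names (or an equivalent fusion device); a one-step reflection cannot see the intermediate models.
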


\begin{proof}
 As usual, we assume that $M\prec H(\mu)$ (for some suitably large $\mu$) is countable,
  and that $M_0 = M\cap H(\kappa)\in {\mathcal M}_p$ for some condition $p$. Let
   $D\in M$ be a dense subset of $S\times \mathcal P$ and assume that
   $(s^\dagger,r)\in D$. 
We may assume that
there is some elementary submodel $M_r$
including $r$,
that $s^\dagger \notin M_r$,
 and that $s^\dagger\cap M_r\in S_r$. This means that for all
$s' \in \tilde M$ and $x\in M_r\cap \theta$, $s'\oplus s^\dagger$
forces 
a value on $\dot U(x,m_r) \cap M_r$. Let $\langle M_i : i\in
\bar\ell\rangle $ enumerate $\mathcal M_r\setminus M$ in increasing
order. By Lemma \ref{singleM} we can assume that 
$\mathcal M_r \cap M$  is not empty, and let
 $M^\dagger $ denote the maximum element. Let $\alpha = M^\dagger\cap \omega_1$
and $\delta_0 = M_0\cap \omega_1$.  We may additionally assume
that
$s\restriction 
[\alpha,\delta_0) = s'\restriction [\alpha,\delta_0)$ for all $s,s'
\in S_r$.

 The plan now is to find $q\in M\cap \mathcal P$ so that 
 $(s^\dagger   ,q)\in D$ and $q\not\perp r$. Usual arguing will arrange
 that $q< r\restriction M$ and, loosely speaking, that, for
some easily chosen expansion $S_{r,q}$ of $S_r$, 
 $(\mathcal M_q\cup \mathcal M_r, S_{r,q}, m_r)$ will be a condition
 in $\mathcal P$ extending $q$.
 The   challenge is to ensure
 that such a condition is also an extension of $r$, which requires
that,
for each $\tilde M\in \mathcal M_q$, we have that
  $s\Vdash  e(y^{\tilde M}(s'\cap \tilde M)) \in \dot
  W_r(s' \cap M)$ for all $s,s'\in S_r$.
Some standard elementary submodels as side-conditions reasoning, together
with Lemma \ref{forproper} do the trick. 
We have applied Lemma 
 \ref{singleM} above so 
we also have that $s_i = \bar s_i\oplus  s_0$
   for each $i< \ell$. One thing we have gained is that in checking
 if $q\not\perp r$,  we need only check on membership in 
sets of the form $\dot W_r(\bar s_i\oplus  s_0 )$.

Let us say that $q$ is \underbar{\textbf{like}}  $ r$  (or $q\equiv r$)
 providing 
\begin{enumerate}
 \item $\mathcal M_r \cap M_0$ is an 
initial segment of $\mathcal M_q$, 
\item $\mathcal M_q \setminus \mathcal M_r = \{ M^q_i : i< \bar \ell\}$
has  cardinality $\bar \ell = |\mathcal M_r\setminus M|$,
 \item the tree structure $(S_q^\downarrow,<,\oplus)$ is
  isomorphic to $( S_r^\downarrow,<,\oplus)$,
  \item $ \{ s^q_i : i < \ell\} = \{ s\cap M^q_0 :  s\in S_q\}$ is also equal
  to $\{ \bar s_i \oplus  s^q_0 : i < \ell\}$,
\end{enumerate}

For $q\equiv r$,  and $k<\bar \ell$, 
let   $\langle s^{q,k}_i : i < \ell_{q,k}\rangle $ be
the set 
 $\{ s\cap M^q_k : s\in S_q\}$  ordered  lexicographically.
Also let  $\vec y_k\supy q $ denote the $\ell_{q,k}$-tuple
 $\langle y^{M^q_k} (s^{q,k}_i ) :  i<\ell_{q,k}\rangle$.
We have to do this because we want these to be members
of $Y^{\ell_{q,k}}$.
Thus we have the $\bar \ell$-tuple $\langle \vec y_k\supy q
 : k< \bar \ell\rangle$ associated
with each $q\equiv r$. 
Of course, $\ell_{q,k}$ is equal to $\ell_{r,k}$
 for $q\equiv r$.
Also, for each $k<\bar \ell$, let $i^\dagger_k$
denote the index $i$ with the property that $s^{r,k}_i < s^\dagger$.

\bigskip

Recursively define a collection of sets and names. 
First we have the $S$-name:
\[ \dot {\mathcal Y}_{\bar \ell} = 
 \{ (s, \langle \vec y_k\supy q : k< \bar \ell\rangle) : 
 (s,q)\in D \ \mbox{and}\ 
q\equiv r
\}~.\]

As usual, we have that $\dot {\mathcal Y}_{\bar \ell}\in M_0$
($q\equiv r$  can be described  within $M$). Now define, 
 for $k\in \{ \bar\ell-1, \bar \ell-2, \ldots, 0\}$ (in that order)
\begin{multline}
 \dot A(q,k) = \{ (\bar s, \vec y_k\supy {\bar q}) :
(\bar s, \langle \vec y_j\supy {\bar q} : j\leq k \rangle) \in
\dot {\mathcal 
  Y}_{k+1} \ , \\ 
 s^q_{i^\dagger_{k}} < \bar s\ \ \mbox{and}\ \ 
\langle \vec y_j\supy {\bar q} : j <   k\rangle =
\langle \vec y_j\supy q : j <  k\rangle   \}
\end{multline}
and let (for $k>0$)
\[ \dot {\mathcal Y}_{ k} = \{
 (s,\langle \vec y_m\supy q : m< k  \rangle) : s\forces \dot
 A(q,k) \in 
 \dot {\mathcal F}^+_{\langle s^{q,k}_i : i< \ell_{q,k}\rangle}\}~.\]

Thus $\dot A(q,\bar \ell -1)$ contains  the ``top'' element of the sequence
 $\langle \vec y_{k}\supy q : k< \bar \ell\rangle$.
Of course 
$s\forces \dot A(q,k) \in
 \dot {\mathcal F}^+_{\langle s^{q,k}_i : i< \ell_{q,k}\rangle}$
is equivalent to $(s,\langle s^{q,k}_i : i< \ell_{q,k}\rangle, 
\dot A(q,k))$ being a member of $ \mathcal A$. We use the notation
 $\dot A(q,k)$ rather than the more cumbersome
 $\dot A(\langle \vec y_j\supy q : j <  k\rangle )$ but let
us note that the definition depends only on the parameters
$\langle \vec y_j\supy q : j <  k\rangle $ and
 $\dot {\mathcal Y}_{k+1}$ in $M^q_k$
(and the latter is
an element of $M_0$).

Clearly $(s^\dagger, \langle \vec y_k\supy r :k <\bar \ell\rangle)\in 
 \dot {\mathcal Y}_{\bar \ell}$ and, for readability,
 let $k=\bar \ell-1$.
 Recall that $i= i^\dagger_{k}$
was  defined so that $s^r_{i}  = s^\dagger \cap
 M^r_{k}$. 
We  then have that
 $s^\dagger \Vdash \vec y_{k}\supy r \in \dot A(r,k)$. 
Now we
show that $(s^\dagger, \langle s^{r,k}_i : i < \ell_{r,k} \rangle, 
\dot A(r,k)\,)$ is in $\mathcal A$. First choose any $\xi_k\in 
 M^r_k \setminus M^r_{k-1}$ large enough so that all members
of $\{ s^{r,k}_i : i <\ell_{r,k}\}$ agree on the interval
 $[\xi_k, M^r_k\cap \omega_1)$. Let ${\bar s}^r_i = 
 s^r_i\restriction \beta_k$ for each $i<\ell_{r,k}$. 
As discussed
 above, we have that $\dot A(r,k)$ is an element of $M^r_k$.
By Lemma \ref{basiclemma}, there is a $\gamma\in M^r_k\cap \omega_1$
such
that each $s\in S_\gamma$ decides the statement
 $\dot A(r,k)\in \dot{\mathcal F}^+_{\langle {\bar s}^r_i :
   i<\ell_{r,k}\rangle} $. 
It follows from Lemma \ref{Melement} 
that $s^\dagger$ forces that $\vec y_k\supy r$ is a witness to the
fact that $\dot A\in \dot{\mathcal F}^+_{\langle {\bar s}^r_i :
   i<\ell_{r,k}\rangle}$. Then, by applying Lemma \ref{basiclemma}
 again, there is a $\beta_k\in M^r_k$ such that the tuple
 $( s^\dagger\restriction \beta_k, 
\langle {\bar s}^r_i :
   i<\ell_{r,k}\rangle, \dot A(r, k)\,)$ is in $\mathcal A\cap
   M^r_k$. This now shows that (for this value of $k$)
$(s^\dagger\restriction\beta_k , \langle
 \vec y_m\supy r : m <  k\rangle)$ is a member of $\dot
 {\mathcal Y}_{k}$.

Continuing this  standard argument, walking down from $s^\dagger$,
shows that, for each $k<\bar \ell$, there is a $\xi_k\in M^r_{k}$
such that
$(s^\dagger\restriction\xi_k , \langle
 \vec y_m\supy r : m <  k\rangle)$ is a member of $\dot
 {\mathcal Y}_{k+1}$.
Now we have that there is some $\beta_0\in M_0$ such that
$(s^\dagger\restriction\beta_0 , \emptyset)$ is a member of $\dot
 {\mathcal Y}_{1}$; and more importantly that 
 $(s^\dagger\restriction \beta_0, \emptyset, \dot A(r, 0 ))\in
 \mathcal A\cap M_0$. By Lemma \ref{forproper}, there is a 
 $\vec y_0\in M_0$ such that $s^\dagger\forces 
\vec y_0  \in \dot A(r,0  )$ and, for each 
 $s'\in S_r$ and $i<\ell_{r,0}$,
 $s'\forces e(~(\vec y_0)_i~ ) \in \dot W_r((
s_i \oplus s^\dagger)\restriction M_0)$. Now, we do not ``really'' mean
 $(\vec y_0)_i$ but rather  $y^{M^q_0}(s^0_i)$ for a suitable
$q_0\equiv r$ such that $\vec y_0 = \vec y_0\supy {q_0}$. If 
 we let $\gamma_0 = M^{q_0}_0\cap \omega_1$, then we have
that $s^{q_0}_{i_0^\dagger} = s^\dagger \restriction \gamma_0$, and
for each $i<\ell_{r,0}$, 
$$s^0_i = s^{q_0}_i =  \bar q_i \restriction \gamma_0 \oplus
s^{q_0}_{i^\dagger_0}~.$$

By elementarity, there is a $\alpha_1\in M_0$ such that
$$(s^\dagger\restriction\alpha_1, \langle 
 s^0_i : i < \ell_{r,0}\rangle, \dot A(q_0, 1))\in \mathcal A~.$$

We apply Lemma \ref{forproper} again and obtain 
 $r\equiv q_1 \in M_0$ such that
 $s^\dagger\forces 
 \vec y_1\supy {q_1}  \in \dot A(
q_0, 1 )$ and, for each 
 $s'\in S_r$ and $i<\ell_{r,1}$,
 $s'\forces e((\vec y_1)_i\,) \in 
\dot W_r((
s^{q_1}_i \oplus  s^\dagger)\restriction M_0)$.
 Unlike in the first step, it may
happen that $(s^{q_1}_i\oplus s^\dagger)\restriction M_0$
 is not in $S_r^\downarrow$, which poses no problem since
then $\dot W_r((
s^{q_1}_i \oplus  s^\dagger)\restriction M_0)$ 
is all of $\theta$.
 If 
 we let $\gamma_1 = M^{q_1}_1\cap \omega_1$, then we have
that $s^{q_1}_{i_1^\dagger} = s^\dagger \restriction \gamma_1$;
 and for each $i<\ell_{r,1}$~,  let
$s^1_i = s^{q_1}_i $.  So we again have
that $(\,s^\dagger, \langle s^1_i : i< \ell_{r_0} \rangle , \dot
A(q_1,2)\,)$ is in $\mathcal A$.

Well, we just repeat this argument for $\bar \ell$ steps until
 we find  $q = q_{\bar\ell} \equiv r$ 
with the property that $(s^\dagger, q)\in
 D$  and, for each $k<\bar\ell$, 
and for each 
 $s'\in S_r$ and $i<\ell_{r,k}$,
 $s'\forces e(
\vec y_k\supy q  ((s^q_{i}\oplus s^\dagger)\cap M^q_k)\,)
 \in \dot W_r((s^q_i \oplus
 s^\dagger)\restriction M_0)$. Again, for larger values of $k$, it may
 happen 
 that
 $(s^k_i \oplus s^\dagger) \cap M_0$ is not in $S_r^\downarrow$, and so
 $\dot W_r((s^k_i \oplus s^\dagger) \cap M_0)$ would simply equal $X$
or $\theta$.
\end{proof}

\section{On\label{nolarge}
 the consistency of $\mathbf{GA}$}

 Shelah has defined the
$\kappa $-p.i.c. (for ``proper isomorphism condition'') .  The reason for
this is that a  
countable support iteration of length 
$\omega_2$    of $\aleph_2$-p.i.c.proper posets will 
(under CH) satisfy the
$ \aleph_2$-chain condition, while just assuming that the 
factors themselves satisfy the
$\aleph_2$-chain condition does not guarantee the 
iteration
will. A diamond sequence on $\omega_2$  will help us decide
   which proper
$\aleph_2$-p.i.c.posets of size $\aleph_2$    to use in such an iteration.  The 
resulting iteration
will have cardinality $\aleph_2$    and the objects which we
 want 
to consider in the extension will also have cardinality $\aleph_2$. 
We have to
show that given 
any such reflected object there is an appropriate $\aleph_2$-p.i.c.
proper poset
 of cardinality $\aleph_2$    which will introduce the required set and 
that this set is still appropriate in the final model.
For this we use the method of Todorcevic \cite{matrices} in which side
conditions are finite sets (or matrices) of elementary submodels
rather than the more common method in which side conditions are
simple finite chains of elementary submodels. It is this change which
is the key in making the posets strongly $\aleph\sb2$-cc (and
iterable), thus removing the need for large cardinals to prove the
results. 

\begin{defn}[{\cite[Ch. VIII]{proper}}]
 \label{3.2}
  A poset $ P $ is said to satisfy the $ \kappa $-pic
 if whenever we have a sufficiently large cardinal $ \lambda  $,
 a well-ordering $\prec $ 
of $ H(\lambda )$, $i < j < \kappa $,
 two countable elementary submodels $ N_i $  and $N_j $
 of $\langle H(\lambda ), \prec , \in  \rangle $
 such that $ \kappa $  and $ P $ are in $ N_i  \cap  
N_j $ , $ i \in  N_i $ , $j \in  N_j$,
$N_i  \cap i = N_j \cap  j $,
 and suppose further that we are given $ p \in  
N_i $ and an 
isomorphism $ h : N_i \rightarrow N_j$
  such that $ h(i) = j $ and $ h $ is the identity on 
$N_i  \cap  N_j $  then there is a $ q \in  P $ such that : 
\begin{enumerate}
\item
$  q < p$ , $q < h(p) $ and $ q $ is both $ N_i $  and $ N_j $  generic , 
\item  if $ r \in  N_i  \cap  P $ and $ q' < q $ 
there is a $ q'' < q' $ so that \\
$q'' < r $  if and only if $  q'' < h(r) $ . 
\end{enumerate}
\end{defn}
 
\begin{proposition}    A countable\label{3.3} support iteration
 of length at most $ \omega_1$    of  
$\aleph_2$-p.i.c. proper posets is again $ \aleph_2$-p.i.c..
  Furthermore if  CH holds  and the 
iteration has length at most $\aleph_2\ $   then the iteration satisfies the  
$\aleph_2$-cc . 
\end{proposition}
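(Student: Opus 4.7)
The plan is to handle the two assertions separately, proving preservation of $\aleph_2$-p.i.c. under countable support iteration of length $\leq \omega_1$ by induction, and then deducing the $\aleph_2$-cc under CH by a $\Delta$-system reflection argument.

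For the first part, I would induct on the length $\alpha$ of the iteration $\langle P_\beta, \dot Q_\beta : \beta < \alpha\rangle$. Suppose we are given countable elementary submodels $N_i$ and $N_j$ of $H(\lambda)$ with $i\in N_i$, $j\in N_j$, $N_i\cap i = N_j\cap j$, $p\in N_i\cap P_\alpha$, and an isomorphism $h:N_i\to N_j$ fixing $N_i\cap N_j$ with $h(i)=j$ and $h(P_\alpha) = P_\alpha$. The successor case $\alpha = \beta + 1$ combines the inductive hypothesis (an $N_i$- and $N_j$-generic $q_\beta\leq p\restriction\beta,\ h(p)\restriction\beta$ with the sliding property (2)) with the $\aleph_2$-p.i.c. of $\dot Q_\beta$ applied in the $P_\beta$-extension. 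The limit case is handled by the standard Shelah fusion: enumerate in $N_i$ the dense open sets to be met, and recursively construct a descending sequence of $P_{\alpha_n}$-conditions $q_n$ in the iteration, $P_{\alpha_n}$-names for extensions of $p\restriction\alpha_n$ and $h(p)\restriction\alpha_n$, choosing $\alpha_n$ cofinal in $\sup(N_i\cap\alpha) = \sup(N_j\cap\alpha)$. Because the iteration has length $\leq\omega_1$ and the condition $p$ lies in $N_i$, its support is countable and lies inside $N_i\cap\alpha$, so the countable-support union of the $q_n$ is a legitimate condition of $P_\alpha$; the symmetric sliding clause (2) is maintained throughout because the dense sets $r$ and their $h$-images $h(r)$ are matched at each step.

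The main obstacle is the limit stage of the induction. One has to arrange simultaneously that $q$ is $N_i$-generic, $N_j$-generic, extends both $p$ and $h(p)$, and satisfies clause (2) of Definition 3.2 after the iteration; the delicate point is that the isomorphism $h$ need not extend to an isomorphism between $N_i[G_\alpha]$ and $N_j[G_\alpha]$, so at each stage one must freshly invoke the inductive hypothesis to a pair of countable models of the form $N_i[G_{\alpha_n}]$, $N_j[G_{\alpha_n}]$ reflecting the finite data of the construction, and verify that the $h$-image of a decision made inside $N_i$ about the fusion sequence is compatible with the decision made inside $N_j$. This is exactly the point where the p.i.c., as opposed to plain properness, is needed to carry the argument through.

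For the second part, assume CH and that the iteration $P_{\omega_2}$ has length $\leq \aleph_2$. Suppose for contradiction that $\{p_\xi:\xi<\omega_2\}$ is an antichain. Using CH, pick a continuous increasing chain $\langle N_\xi:\xi<\omega_2\rangle$ of elementary submodels of $(H(\lambda),\prec)$ of cardinality $\aleph_1$ with $P_{\omega_2}\in N_0$, $N_\xi\cap\omega_2\in\omega_2$ and $|N_\xi|^{<\omega_1}\subset N_{\xi+1}$. By a pressing-down/$\Delta$-system argument on the (countable) supports and on the $N_\xi$-parts of the $p_\xi$, plus CH (so that there are only $\aleph_1$ possible isomorphism types of countable $\in$-structures coded in $H(\aleph_2)$), one finds $i<j<\omega_2$, countable $N'_i\prec N_{i+1}$ and $N'_j\prec N_{j+1}$ with $i\in N'_i$, $j\in N'_j$, $N'_i\cap i = N'_j\cap j$, $p_i\in N'_i$, $p_j\in N'_j$, and an isomorphism $h:N'_i\to N'_j$ fixing $N'_i\cap N'_j$ with $h(i)=j$ and $h(p_i)=p_j$. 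The $\aleph_2$-p.i.c. given by the first part now produces $q\in P_{\omega_2}$ with $q\leq p_i$ and $q\leq h(p_i)=p_j$, contradicting the antichain assumption. Hence every antichain has size at most $\aleph_1$, which is exactly the $\aleph_2$-cc.
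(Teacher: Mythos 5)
The paper offers no proof of this proposition---it is quoted from Shelah's \emph{Proper Forcing}, Ch.~VIII---so your sketch has to stand on its own. Your argument for the first assertion is the standard one and is acceptable as a sketch; in particular you are implicitly using the right reason why length $\le\omega_1$ makes the fusion easy: $h$ is the composition of the two transitive collapses, every countable ordinal in $N_i$ is a subset of $N_i$, hence $h$ is the identity on $N_i\cap\omega_1$, so $N_i\cap\alpha=N_j\cap\alpha$, the supports of $p$ and $h(p)$ coincide, and a single fusion sequence cofinal in $\sup(N_i\cap\alpha)$ serves both models at once.

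The second assertion is where there is a genuine gap. Your closing step, ``the $\aleph_2$-p.i.c.\ given by the first part now produces $q\le p_i$ and $q\le h(p_i)=p_j$,'' applies the first part to the whole iteration, whose length may be anywhere up to $\omega_2$; the first part only covers length at most $\omega_1$, so you are invoking an instance of the p.i.c.\ that you have not established (and which, for length exactly $\omega_2$, is not even part of the classical theorem). The standard route is a simultaneous induction showing that, under CH, every proper initial segment $P_\gamma$ with $\gamma<\omega_2$ satisfies the $\aleph_2$-p.i.c.\ and preserves CH (the latter is the separate lemma the paper states right after this proposition); this is genuinely harder than your first part, because for $\gamma>\omega_1$ the map $h$ need no longer fix $N_i\cap\gamma$ pointwise, so $\operatorname{supp}(p)$ and $\operatorname{supp}(h(p))$ are different countable sets and the fusion has to treat the two families of coordinates separately. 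Then, for an antichain in the full iteration, your $\Delta$-system step must additionally arrange that $\operatorname{supp}(p_i)\cap\operatorname{supp}(p_j)$ is contained in some $\gamma<\omega_2$ fixed by $h$; compatibility of $p_i\restriction\gamma$ and $p_j\restriction\gamma$ then comes from the p.i.c.\ of $P_\gamma$, and the common extension is completed above $\gamma$ by taking the union of the two conditions on their (there disjoint) supports. As written, your derivation of the $\aleph_2$-cc does not go through.
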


\begin{proposition}
  A proper poset of cardinality $ \aleph_1 \ $   satisfies the 
$ \aleph_2$-pic. 
\end{proposition}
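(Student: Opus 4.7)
The plan is to exploit the cardinality bound $|P|=\aleph_1$ to force the isomorphism $h$ to act as the identity on $N_i\cap P$, so that the $\aleph_2$-pic collapses to a single application of properness. First I would code $P$ as a subset of $\omega_1$, which is possible since $|P|=\aleph_1$, and note that in the intended applications (and in the usual formulation of the pic in iteration contexts) we may assume $i,j\geq\omega_1$; the clause $N_i\cap i = N_j\cap j$ then gives $N_i\cap\omega_1\subseteq N_i\cap i = N_j\cap j\subseteq N_j$ and symmetrically, so $N_i\cap\omega_1 = N_j\cap\omega_1 =: \delta$ for some countable ordinal $\delta$. Consequently $P\cap N_i = P\cap\delta = P\cap N_j\subseteq N_i\cap N_j$, and since $h$ is the identity on $N_i\cap N_j$ we obtain $h\restriction(P\cap N_i) = \mathrm{id}$. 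In particular $h(p)=p$ and $h(r)=r$ for every $r\in N_i\cap P$.

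With this fixed-point property in hand, clause~(2) of Definition~\ref{3.2} is immediate: given $r\in N_i\cap P$ and $q'\leq q$, the equivalence ``$q''\leq r\iff q''\leq h(r)$'' is automatic because $h(r)=r$, so one takes $q''=q'$. For clause~(1), properness of $P$ applied to the countable model $N_i$ and the condition $p\in N_i\cap P$ delivers some $q\leq p$ that is $N_i$-generic; since $h(p)=p$, this $q$ automatically extends $h(p)$ as well. The only nontrivial assertion left is that this same $q$ is $N_j$-generic. Given any dense open $E\in N_j$, let $D:=h^{-1}(E)\in N_i$; because $h$ is an isomorphism fixing $P$, $D$ is a dense open subset of $P$, and because $h$ is the identity on $\delta$, for every $\alpha\in\delta$ we have $\alpha\in D \iff \alpha\in E$. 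Thus $D\cap N_i = D\cap\delta = E\cap\delta = E\cap N_j$, and $N_i$-genericity of $q$ transfers directly to the conclusion that $E\cap N_j$ is predense below $q$.

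The main obstacle I anticipate is a bookkeeping one rather than a conceptual one: formally justifying the reduction to $i,j\geq\omega_1$ (or, alternatively, handling the residual case $i<\omega_1$, where $h$ need not act as the identity on $N_i\cap\omega_1$ and one must instead use that $h$ restricts to an order-isomorphism $N_i\cap\omega_1\to N_j\cap\omega_1$ that pulls dense open subsets of $P$ back to dense open subsets of $P$). Once the identification $h\restriction(P\cap N_i)=\mathrm{id}$ is established, everything reduces to one invocation of properness plus the straightforward transfer of dense sets through $h^{-1}$, so no further combinatorial work is needed.
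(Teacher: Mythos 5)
Your argument is correct, and it is the standard one; the paper itself states this proposition without proof (it is imported from Shelah's \emph{Proper Forcing}, Ch.~VIII), so there is nothing to compare against beyond noting that your route is exactly the intended one: since $|P|=\aleph_1$, the $\prec$-least bijection $\pi\colon\omega_1\to P$ lies in $N_i\cap N_j$ and is fixed by $h$, so $N_i\cap P=\pi[N_i\cap\omega_1]=\pi[N_j\cap\omega_1]=N_j\cap P$ and $h$ fixes this set pointwise, after which clause~(2) is vacuous and clause~(1) reduces to a single application of properness plus the transfer $h^{-1}(E)\cap N_i=E\cap N_j$ of dense sets. The one obstacle you flag, the case $i<\omega_1$, is in fact vacuous: if $i<\omega_1$ then $i\subseteq N_i$, so $N_i\cap i=i$ and the hypothesis $N_i\cap i=N_j\cap j$ forces $N_j\cap\omega_1=i$ while $i\in N_i\cap\omega_1$ gives $N_i\cap\omega_1>i$; since $h$ restricts to an order isomorphism of $N_i\cap\omega_1$ onto $N_j\cap\omega_1$ (as $h(\omega_1)=\omega_1$), these two ordinals would have to be equal, a contradiction. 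So no residual case remains and your proof is complete.
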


\begin{lemma}
  [{[CH]}]  If $ P $ is a\label{chaincondition} 
  proper $ \aleph_2$-p.i.c.
 poset and $ G $ is $P$-generic 
over $ V $ then $ V[G]\models \mathfrak c  = \omega_1 \ $   . 
\end{lemma}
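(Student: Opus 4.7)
The plan is to argue by contradiction, following the standard reflection template for Shelah's $\kappa$-pic. Suppose toward a contradiction that $V[G]\models \mathfrak c \geq \aleph_2$. Using the well-ordering $\prec$ of $H(\lambda)$, choose a $\prec$-minimal sequence $\vec{\dot r} = \langle \dot r_\alpha : \alpha < \omega_2\rangle$ of $P$-names and a condition $p^\ast \in P$ such that $p^\ast \Vdash \dot r_\alpha \neq \dot r_\beta$ for all $\alpha \neq \beta$, each $\dot r_\alpha$ naming an element of $2^\omega$. For each $\alpha$, fix a countable elementary submodel $N_\alpha \prec \langle H(\lambda), \in, \prec \rangle$ with $\{P, p^\ast, \vec{\dot r}, \alpha\} \subseteq N_\alpha$. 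The key bookkeeping point is that $\vec{\dot r}$ (the whole sequence, as a single object) belongs to every $N_\alpha$, so any $\in$-isomorphism $h \colon N_\alpha \to N_\beta$ which fixes $N_\alpha \cap N_\beta$ pointwise and sends $\alpha \mapsto \beta$ automatically satisfies $h(\dot r_\alpha) = \dot r_\beta$ and $h(p^\ast) = p^\ast$.

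The next step is the combinatorial reflection. Under CH, $|H(\aleph_1)| = \aleph_1$, hence $|[H(\aleph_1)]^{\aleph_0}| = \aleph_1$, so the map $\alpha \mapsto N_\alpha \cap H(\aleph_1)$ takes only $\aleph_1$ values; by pigeonhole it is constant on some $T \subseteq \omega_2$ with $|T| = \aleph_2$. A standard $\Delta$-system and pressing-down refinement of $T$ produces $\alpha < \beta$ in $T$ with $N_\alpha \cap \alpha = N_\beta \cap \beta$ and such that the Mostowski-collapse-then-uncollapse bijection gives an $\in$-isomorphism $h \colon N_\alpha \to N_\beta$ which is the identity on $N_\alpha \cap N_\beta$, with $h(\alpha) = \beta$. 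This is the step that genuinely uses CH and is the main technical obstacle, since we need the pool of isomorphism types of countable decorated submodels to be bounded by $\aleph_1$ before pigeonholing among $\aleph_2$ ordinals.

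Having produced $\alpha < \beta$, $h$, and the parameter $p^\ast \in N_\alpha$, apply clause (1) of Definition \ref{3.2}: there is $q \in P$ with $q < p^\ast$ and $q < h(p^\ast) = p^\ast$ that is both $N_\alpha$- and $N_\beta$-generic. Let $G \ni q$ be $P$-generic over $V$. Clause (2) of the pic says that for every $r \in N_\alpha \cap P$ and every $q' < q$ there is $q'' < q'$ with $q'' < r \Leftrightarrow q'' < h(r)$; a routine density argument in $V[G]$ then yields $r \in G \Longleftrightarrow h(r) \in G$ for each $r \in N_\alpha \cap P$. For each $n \in \omega$, $N_\alpha$-genericity of $q$ furnishes some $r_n \in G \cap N_\alpha$ deciding $\dot r_\alpha(n) = k_n$. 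Applying $h$ and the above equivalence, $h(r_n) \in G \cap N_\beta$ decides $\dot r_\beta(n) = k_n$ (since $h$ fixes $n$ and $k_n$, both in $\omega$, and $h(\dot r_\alpha) = \dot r_\beta$). Therefore $\dot r_\alpha[G] = \dot r_\beta[G]$, contradicting $p^\ast \in G$. This contradiction gives $V[G] \models \mathfrak c \leq \aleph_1$; combined with properness of $P$ (so $\aleph_1^{V} = \aleph_1^{V[G]}$) and the trivial lower bound $\mathfrak c \geq \aleph_1$, we conclude $V[G] \models \mathfrak c = \omega_1$.
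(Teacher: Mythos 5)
The paper states Lemma \ref{chaincondition} without proof (it is quoted from Shelah's \emph{Proper Forcing}, Ch.~VIII), so there is no in-paper argument to compare against; your proposal is the standard proof and its overall architecture is correct. Two places are compressed enough that I want to flag them. First, in the reflection step, the pigeonhole on $\alpha\mapsto N_\alpha\cap H(\aleph_1)$ is not what makes $h$ the identity on $N_\alpha\cap N_\beta$: that intersection typically contains objects outside $H(\aleph_1)$ (ordinals in $[\omega_2,\lambda)$, the poset $P$ itself, uncountable sets), and to fix those you need the $\Delta$-system (available for $\aleph_2$ many countable sets precisely because CH gives $\aleph_1^{\aleph_0}=\aleph_1$) to make $N_\alpha\cap N_\beta$ equal to a fixed root $R$, followed by a further pigeonhole making the transitive collapse \emph{decorated with constants for the elements of $R$ and for $\alpha$} constant (at most $2^{\aleph_0}=\aleph_1$ possibilities); only then is the collapse--uncollapse map an isomorphism fixing $N_\alpha\cap N_\beta$ with $h(\alpha)=\beta$. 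The pressing-down giving $N_\alpha\cap\alpha=N_\beta\cap\beta$ also requires first restricting to $\alpha$ of uncountable cofinality so that $\sup(N_\alpha\cap\alpha)<\alpha$. Second, the biconditional $r\in G\Leftrightarrow h(r)\in G$ does not follow immediately from density below $q$ of $\{q'' : q''<r\Leftrightarrow q''<h(r)\}$: a $q''\in G$ in this set may satisfy neither $q''<r$ nor $q''<h(r)$ even when $r\in G$. One should instead note that $\{q''\le q : q''\perp r\ \mbox{or}\ q''\le h(r)\}$ and its mirror image are dense below $q$, or bypass the biconditional entirely by building the decision of $\dot r_\alpha(n)$ and the passage to $h$ into a single dense set. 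With these routine repairs the argument is complete and correct.
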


Following \cite{matrices}, for a
countable
 elementary submodel $N$ of $H(\aleph_2)$, 
 we let $\overline{N}$ be the transitive collapse,
and we let $h_N : N \rightarrow \overline{N}$ be the collapsing map,
 i.e. $h_N(x) = \{ h_N(y) : y\in x\cap N \}$.

\begin{lemma}
Suppose that $N_1,N_2$ are countable\label{homega1}
 elementary submodels of $H(\aleph_2)$ such that
 $\overline{N_1} = \overline{N_2}$, and let
$h_{N_1,N_2} $ denote the map $h_{N_2}^{-1} \circ h_{N_1}$. Then 
$h_{N_1,N_2}$ is the identity on $H(\aleph_1)\cap N_1$ and
 for each $A\in N_1$ with $A\subset H(\aleph_1)$,
 $A\cap N_1  = h_{N_1,N_2}(A) \cap N_2$. 
\end{lemma}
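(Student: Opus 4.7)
The plan is to reduce both conclusions to the single combinatorial fact that the hypothesis $\overline{N_1}=\overline{N_2}$ forces $H(\aleph_1)\cap N_1 = H(\aleph_1)\cap N_2$. Once this is in hand, both parts of the lemma become straightforward, because for any countable $N\prec H(\aleph_2)$ the collapse $h_N$ acts as the identity on $H(\aleph_1)\cap N$. The first step is to verify this: for $x\in H(\aleph_1)\cap N$, $x$ is countable, so elementarity places a bijection $f:\omega\to x$ inside $N$, which gives $x\subseteq N$; then an $\in$-induction using the transitivity of $H(\aleph_1)$ yields $h_N(x)=\{h_N(y):y\in x\cap N\}=x$.

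Write $F_N:=\{x:h_N(x)=x\}$; the previous step combined with $\overline{N}\subseteq H(\aleph_1)$ gives $F_N=H(\aleph_1)\cap N$. The crux of the proof is the second step: identifying $F_N$ purely in terms of $\overline{N}$. Because $h_N:(N,\in)\to(\overline{N},\in)$ is an $\in$-isomorphism it preserves all formulas, and since the predicate ``$x\in H(\aleph_1)$'' is expressible in $H(\aleph_2)$ with parameter $\omega_1$, combining the isomorphism with $N\prec H(\aleph_2)$ produces the identity $F_N=H(\aleph_1)^{\overline{N}}$, where the right-hand side is computed inside $\overline{N}$ using $\omega_1^{\overline{N}}=h_N(\omega_1)$. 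Since the right-hand side depends only on $\overline{N}$, the hypothesis $\overline{N_1}=\overline{N_2}$ immediately gives $H(\aleph_1)\cap N_1=F_{N_1}=F_{N_2}=H(\aleph_1)\cap N_2$.

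With this shared trace, part (1) is a two-line computation: for $x\in H(\aleph_1)\cap N_1=H(\aleph_1)\cap N_2$, both $h_{N_1}(x)=x$ and $h_{N_2}(x)=x$, so $h_{N_1,N_2}(x)=h_{N_2}^{-1}(h_{N_1}(x))=h_{N_2}^{-1}(x)=x$. For part (2), given $A\in N_1$ with $A\subseteq H(\aleph_1)$, every element of $A\cap N_1$ is fixed by $h_{N_1}$, so $h_{N_1}(A)=A\cap N_1$. Setting $B=h_{N_1,N_2}(A)\in N_2$, we get $h_{N_2}(B)=A\cap N_1$. Each $y\in B\cap N_2$ then satisfies $h_{N_2}(y)\in A\cap N_1\subseteq F_{N_2}$, and since the bijection $h_{N_2}$ preserves the fixed-point set this forces $y\in F_{N_2}$ and $y=h_{N_2}(y)$. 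Hence $B\cap N_2=\{h_{N_2}(y):y\in B\cap N_2\}=h_{N_2}(B)=A\cap N_1$.

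The main obstacle is Step 2. A priori some $x\in H(\aleph_1)\cap N_1$ could enter $\overline{N_2}=\overline{N_1}$ not as a fixed point of $h_{N_2}$ but as the collapse of some uncountable element of $N_2$; this would put $x\notin N_2$ and sabotage the lemma. The internal reformulation $F_N=H(\aleph_1)^{\overline{N}}$, combined with elementarity of each $N_i$ in $H(\aleph_2)$, is precisely what rules this scenario out.
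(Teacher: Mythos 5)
Your proof is correct and follows essentially the same route as the paper's: an $\in$-induction showing that the transitive collapse fixes every hereditarily countable member of $N$, from which both conclusions follow. Your Step 2 --- identifying the fixed-point set of $h_N$ with $H(\aleph_1)^{\overline{N}}$, so that $N_1\cap H(\aleph_1)=N_2\cap H(\aleph_1)$ --- makes explicit exactly the point the paper's two-line proof leaves implicit, namely why $h_{N_2}^{-1}(x)=x$ rather than the collapse of some uncountable element of $N_2$, and your separate verification of the second assertion about $A\cap N_1=h_{N_1,N_2}(A)\cap N_2$ is likewise a welcome completion of an argument the paper omits.
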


\begin{proof}
It follows by $\in$-induction that  each
 $x\in N_1\cap H(\aleph_1)$,  $x\subset N_1$
and so $x\in \overline{N_1}$.  Therefore we also have, by
$\in$-induction, that $h_{N_1}(x) = x = h_{N_2}^{-1}(x)$.
\end{proof}

A family  $[\mathcal{  N} ] $  is
an elementary matrix if, for some integer $n>0$,
\begin{enumerate}
\item $[\mathcal{ N}]   = \{\mathcal  N_1,\ldots,\mathcal  N_n  \}$
\item for each $1\leq i\leq n$,  $\mathcal N_i$
is 
a  finite set of countable \ elementary \ submodels \ \ of
  $H(\omega_2 )$
\item for each $1\leq i\leq n$, $\overline{N_1} = \overline{N_2}$
for each pair $ N_1,N_2 \in  \mathcal N_i$,
\item for each $1\leq i< j\leq n$ and each 
 $ N_i  \in   \mathcal N_i$, there is an
  $N_j \in  \mathcal N_j$ with $N_i\in N_j $.
\end{enumerate}

  It will be convenient to let  $N \in   [\mathcal{ N}]$, 
for an elementary submodel  $N$  of $H(\aleph_2)$,
be an abbreviation for  $ N \in  \mathcal N $
for some   $\mathcal  N \in  [\mathcal N]$.

\begin{lemma}[CH] If $\dot{\mathcal I}$ is an\label{p22pic}
 $S$-name of a  $P$-ideal on $\omega_1$ such
that 1 forces  that $\dot{ \mathcal I}\cap [E]^{\aleph_0}$ is not empty for
all stationary sets $E\subset\omega_1$, then 
there is an
$S$-preserving $\aleph_2$-p.i.c.proper poset $\mathcal P$ of cardinality
$2^{\omega_1} $ such that $\mathcal P$ forces that
 there is an 
 $S$-name $\dot E$ of a stationary set
with $1\Vdash [\dot E]^{\aleph_0}\subset \dot
 {\mathcal I}$.
\end{lemma}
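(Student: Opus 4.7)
The plan is to adapt the poset used in the PFA(S) proof of $\mathbf{P}_{22}$ earlier in the paper, but to replace the finite $\in$-chains of countable elementary submodels by finite \emph{elementary matrices} in the sense of Todorcevic~\cite{matrices}, in order to gain the $\aleph_2$-p.i.c. Fix a partition $\mathcal E = \{E_s : s \in S\}$ of $\omega_1$ into stationary sets with $E_s \subseteq B$ for $s \neq \emptyset$, and a well-ordering $\prec$ of $H(\aleph_2)$. For each countable elementary submodel $M$ of $(H(\aleph_2),\prec,\mathcal E,\dot{\mc{I}})$ and each $s \in S_{M\cap\omega_1}$, fix the $\prec$-minimal set $a(s,M)$ exactly as in Claim 1 of the earlier proof; so $s \Vdash a(s,M) \in \dot{\mc{I}}$ and $s \Vdash a \subseteq^* a(s,M)$ for every $a \in M$ that $s$ forces into $\dot{\mc{I}}$.

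I would then define $\mathcal P$ to be the set of pairs $p = ([\mathcal N_p], f_p)$ where $[\mathcal N_p]$ is an elementary matrix of countable elementary submodels of $(H(\aleph_2),\prec,\mathcal E,\dot{\mc{I}})$, and $f_p$ assigns to each $M$ appearing in $[\mathcal N_p]$ a node $f_p(M) \in S_{M\cap\omega_1}$ with $s < f_p(M)$ whenever $M\cap\omega_1 \in E_s$, and with $a(f_p(M),M) \in N$ whenever $M \in N$ both appear in the matrix. The ordering $p \leq q$ demands that the rows of $[\mathcal N_q]$ are initial segments of the corresponding rows of $[\mathcal N_p]$, that $f_p$ extends $f_q$, and that whenever $M \in \mathcal N_p \setminus \mathcal N_q$ lies below some $N \in \mathcal N_q$ with $f_p(M) < f_q(N)$ and $N\cap\omega_1 \notin E_\emptyset$, then $M\cap\omega_1 \in a(f_q(N),N)$. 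The $S$-name $\dot E = \{\delta : (\exists p \in G)(\exists M \in [\mathcal N_p])\; M\cap\omega_1 = \delta \text{ and } f_p(M) \in \dot g\}$ produced from a sufficiently generic filter $G$ is forced by every $s_0 \in S$ to be stationary and to satisfy $[\dot E]^{\aleph_0} \subseteq \dot{\mc{I}}$, by the same reasoning as in Claims 2 and 3 of the PFA(S) argument. Properness and $S$-preservation of $\mathcal P$ — i.e.\ properness of $S \times \mathcal P$ — follow the same template as before, using Lemma~\ref{basiclemma} and coherence of $S$; the only difference is that side conditions are matrices rather than chains, which requires more bookkeeping but no new idea.

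The genuinely new step, and the main obstacle, is the $\aleph_2$-p.i.c.\ verification of Definition~\ref{3.2}. Given countable $N_i, N_j \prec (H(\lambda),\prec)$ with $N_i \cap i = N_j \cap j$, an isomorphism $h : N_i \to N_j$ fixing $N_i \cap N_j$, and $p \in N_i \cap \mathcal P$, Lemma~\ref{homega1} says $h$ fixes every element of $H(\aleph_1) \cap N_i$; in particular it fixes $S$, $\mathcal E$, and each set $a(s,M)$ attached to models $M \in N_i \cap N_j$. The required common extension $q$ is obtained by amalgamating the matrices of $p$ and $h(p)$ into a single elementary matrix (placing $N_i$ and $N_j$ as the two topmost models in a new row, both with the same transitive collapse), and extending $f_p \cup f_{h(p)}$ by choosing a single $t \in S_{N_i\cap\omega_1} = S_{N_j\cap\omega_1}$ above the common stem in some $E_{s}$ (where $s$ is the root, so the clause~(5) constraints from the new models are vacuous). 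Checking that $q$ really is a condition reduces to the $a(\cdot,\cdot)$-membership clauses, which hold because the relevant $a(s,M) \in H(\aleph_1)$ are fixed by $h$ on both sides.

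The $N_i$- and $N_j$-genericity of $q$ and clause (2) of Definition~\ref{3.2} are obtained by running the properness argument symmetrically: given any dense open $D \in N_i$, the same reflection/fusion argument that produced extensions in the PFA(S) proof of $\mathbf{P}_{22}$ now produces, for any $q' < q$, an extension $q'' < q'$ deciding the $\dot E$-fragments inside $N_i \cap N_j$; by Lemma~\ref{homega1} the conditions $r \in N_i \cap \mathcal P$ and $h(r)$ agree on every object in $H(\aleph_1)$, so the very same $q''$ decides them identically. Finally, Proposition~\ref{3.3} and Lemma~\ref{chaincondition} imply that under CH this $\mathcal P$ has cardinality $2^{\omega_1} = \aleph_2$ and is iterable without blowing up the continuum, delivering the poset claimed in the lemma.
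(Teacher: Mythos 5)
Your overall strategy is exactly the one the paper prescribes: it leaves Lemma~\ref{p22pic} to the reader with the instruction to take the $\mathbf{P}_{22}$ poset from \S 2 and replace the finite $\in$-chains of submodels by elementary matrices, using Lemma~\ref{homega1} under CH to see that all the relevant combinatorial data --- the nodes of $S$, the partition $\mathcal E$, the name $\dot{\mathcal I}$ as a set of pairs, and hence the sets $a(s,M)$ --- lies in $H(\aleph_1)$ and is therefore invariant under the transfer maps $h_{N_1,N_2}$. Your definition of the conditions and of $\dot E$, and your amalgamation of $[\mathcal N_p]$ and $[\mathcal N_{h(p)}]$ with $\{N_i',N_j'\}$ as a new top row, all match the template of Claim~\ref{itspic} in the sketched proof of Lemma~\ref{ppipic}. (Two cosmetic points: the $E_\emptyset$-exclusion in your ordering should be attached to the \emph{new} model $M$ whose ordinal is being inserted into $a(f_q(N),N)$, since its purpose is to exempt ordinals possibly outside $B$; and you cannot arrange that $N_i\cap\omega_1\in E_\emptyset$ --- you simply take $t\in S_{N_i\cap\omega_1}$ extending whichever $s$ has $N_i\cap\omega_1\in E_s$, which exists because $\dom(s)<\delta$ for limit $\delta\in E_s$.)

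The one step that fails as written is your verification of clause (2) of the $\aleph_2$-p.i.c. You argue that, because $r$ and $h(r)$ ``agree on every object in $H(\aleph_1)$,'' the very same $q''\le q'$ with $q''\le r$ also works for $h(r)$. That is not so: $r$ and $h(r)$ differ precisely in their side conditions, the countable elementary submodels of $H(\aleph_2)$, which are not elements of $H(\aleph_1)$ and are genuinely moved by $h$ whenever they lie in $N_i\setminus N_j$. Since $q''\le h(r)$ requires every model of $[\mathcal N_{h(r)}]$ to appear in $[\mathcal N_{q''}]$, a condition $q''\le r$ will in general fail to extend $h(r)$. The repair is the matrix-augmentation step of Claim~\ref{itspic}: assuming (by symmetry) some $q''\le q', r$, pass to $\tilde q$ obtained by adjoining $h(\mathcal N_m\cap N_i)$ to each row $\mathcal N_m$ of $[\mathcal N_{q''}]$ lying below the row containing $N_i'$; Lemma~\ref{homega1} guarantees $\tilde q$ is still a condition with $\tilde q\le q''$, and $[\mathcal N_{h(r)}]$ is now a submatrix of $[\mathcal N_{\tilde q}]$, so $\tilde q\le h(r)$. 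This augmentation --- which would destroy a chain but preserves a matrix --- is exactly the point of the whole device, so it should not be elided. Finally, note the paper's own remark that for this particular lemma the machinery is optional: under CH the poset can be built with cardinality $\mathfrak c=\aleph_1$, and then the proposition that any proper poset of cardinality $\aleph_1$ satisfies the $\aleph_2$-p.i.c.\ gives the conclusion outright.
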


\begin{lemma}[CH] If $\dot X$ is an $S$-name of a\label{ppipic}
 sequentially compact
  non-compact first countable space, then
there is an
$S$-preserving $\aleph_2$-p.i.c. proper poset $\mathcal P$ of cardinality
$2^{\aleph_1} $ such that $\mathcal P$ forces that
there is an $S$-name 
$\{ \dot x_\gamma : \gamma\in \omega_1\}\subset \dot X$ 
that is forced to contain an uncountable free sequence,
and, if $\dot X$ is first countable,  to
be a homeomorphic copy of $\omega_1$. 
\end{lemma}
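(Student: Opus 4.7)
The plan is to adapt the poset $\mathcal P$ from Section~3 by replacing the finite $\in$-chain side conditions $\mathcal M_p$ with finite elementary matrices $[\mathcal N_p]$ of countable elementary submodels of $H(\kappa)$, in the sense defined just before this lemma (following \cite{matrices}). A condition becomes a triple $([\mathcal N_p], S_p, m_p)$ with the same ancillary data $\dot W_p(s\cap M)$ as in Section~3, now attached to every $M \in [\mathcal N_p]$ via the canonical assignment $y^M$ from Lemma~\ref{Melement}. The ordering is the natural extension: $[\mathcal N_q]$ sits as a sub-matrix of $[\mathcal N_p]$, $m_q \leq m_p$, $S_q \subseteq S_p^\downarrow$, and the coherence requirement on the $\dot W$'s is imposed pairwise across rows.

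Under CH there are only $\aleph_2$ countable elementary submodels of $H(\kappa)$ and only countably many relevant parameters attached to each condition, so $|\mathcal P| = 2^{\aleph_1} = \aleph_2$. To verify the $\aleph_2$-p.i.c.\ of Definition~\ref{3.2}, consider countable $N_i, N_j \prec H(\lambda)$ with $\mathcal P, \kappa \in N_i \cap N_j$, $N_i \cap i = N_j \cap j$, and an isomorphism $h: N_i \to N_j$ fixing $N_i \cap N_j$ with $h(i)=j$. Given $p \in N_i \cap \mathcal P$, I would amalgamate by forming $q$ whose matrix is $[\mathcal N_p] \cup [\mathcal N_{h(p)}]$ augmented with the new row $\{N_i \cap H(\kappa),\ N_j \cap H(\kappa)\}$; this is a valid matrix row because $\overline{N_i \cap H(\kappa)} = \overline{N_j \cap H(\kappa)}$ under the hypothesis $N_i \cap i = N_j \cap j$. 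By Lemma~\ref{homega1}, $h$ is the identity on $H(\aleph_1) \cap N_i$, so $S_p = h(S_p)$ and one may set $S_q = S_p$, $m_q = m_p$. Genericity for both $N_i$ and $N_j$ then follows from a symmetric version of the density argument in Lemma~\ref{singleM}, applied to the two submodels simultaneously using the matrix structure. Clause~(2) of the p.i.c.\ comes from the observation that any extension $q' \leq q$ deciding membership of some $r \in N_i \cap \mathcal P$ can be further extended to decide $h(r) \in N_j$ in the same way, by transporting witnesses through the common transitive collapse.

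For $S$-preservation, I would rerun the proof of Theorem~\ref{ppithm} with matrix side conditions. The required ingredients---Lemma~\ref{basiclemma}, the filter $\mathcal F$ with its $(s,s')$-transfer operation, Lemma~\ref{Melement}, and Lemma~\ref{forproper}---are unaffected by the change from chains to matrices, since each row of a matrix is itself an $\in$-chain and cross-row conditions are governed entirely by the transitive-collapse identifications. A sufficiently generic filter for $\mathcal P$ (meeting analogues of the dense sets $D(\beta,\alpha,s,m)$ from Proposition~\ref{3.9}) then produces a club $C \subseteq \omega_1$ and, for each $\gamma \in C$, a distinguished diagonal model $M_\gamma$ with $M_\gamma \cap \omega_1 = \gamma$. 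The $S$-names $\dot x_\gamma = e(y^{M_\gamma}(\dot g \cap M_\gamma))$ are read off, and the argument of Proposition~\ref{3.9} shows that $\{\dot x_\gamma : \gamma \in \omega_1\}$ is forced by $S$ to be an uncountable free sequence---and a homeomorphic copy of $\omega_1$ since $\dot X$ is first countable.

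The hard part will be clause~(2) of the p.i.c., which demands a uniform coherence between extensions of $q$ in $N_i$ and their $h$-images in $N_j$. Concretely, one must verify that the filter $\mathcal F$, the names $\dot A(q,k)$ from the proof of Theorem~\ref{ppithm}, and the transfer operation $\dot A^s_{s'}$ all commute with $h$ whenever their parameters lie in $N_i \cap N_j$. This reduces to the coherence of $S$ together with the ground-model symmetry of $\mathcal F$ encoded in items~(3)--(6) of its construction, which ensure that the $h$-translated filter agrees with the original on shared parameters---so that $h(p)$ is genuinely interchangeable with $p$ when computing extensions beneath~$q$.
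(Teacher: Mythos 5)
Your overall architecture --- replace the finite $\in$-chains of Section~3 by elementary matrices, amalgamate isomorphic models into a common row to obtain the $\aleph_2$-p.i.c., and rerun Theorem~\ref{ppithm} --- is exactly the paper's. But there is a genuine gap at the point you describe as ``cross-row conditions are governed entirely by the transitive-collapse identifications.'' The collapse isomorphism $h_{N_1,N_2}$ of Lemma~\ref{homega1} is the identity only on $H(\aleph_1)\cap N_1$, whereas the data your side conditions carry --- the sequences $y^{N}(s)$, whose values are names for points of $\dot X$, and the neighbourhood names $\dot U(x,n)$ for $x\in\theta$ --- live outside $H(\aleph_1)$ when the base set $\theta$ is large. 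So for two models $N_1,N_2$ in the same matrix row you have no reason to expect $y^{N_1}(s)=y^{N_2}(s)$; the two models would then attach genuinely different neighbourhood requirements $\dot W_p(s\restriction N_1)\neq\dot W_p(s\restriction N_2)$ to the ``same'' row, and the properness and $S$-preservation arguments of Theorem~\ref{ppithm}, which treat a row as a single side condition, do not go through.

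The missing step is the CH reduction the paper performs first: use Lemma~\ref{itsomega1} and CH to pass to a sequentially compact non-compact subspace of cardinality $\aleph_1$, re-index its base set as $\omega_1\times\omega_1\subset H(\aleph_1)$, and code the topology $\dot\tau$ as a subset of $S\times\omega\times\omega_1^4$; then fix a well-order $\prec_{\omega_1}$ of $H(\aleph_1)$ and take $y^M$ to be the $\prec_{\omega_1}$-minimal sequence satisfying Lemma~\ref{Melement}. Only after this does Lemma~\ref{homega1} guarantee that $h_{N_1,N_2}$ fixes every relevant parameter pointwise, which yields the paper's Claim~\ref{claimsame}: models with the same transitive collapse receive literally the same sequence $\langle y^N(s):s\in S_\delta\rangle$. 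That claim is the linchpin --- it is what makes a matrix row behave like a single chain entry, and it is also what makes your clause~(2) argument concrete, since the paper's verification there is an explicit matrix surgery (adjoining $h(\mathcal N_m\cap N_i)$ to the rows below the new top row so that $[\mathcal N_{h(r)}]$ becomes a submatrix of $[\mathcal N_{\tilde q}]$) rather than an abstract ``transport of witnesses through the collapse.'' A smaller slip: the new top row $\{N_i',N_j'\}$ forces $\delta_q=N_i\cap\omega_1$, so you must take $S_q\subset S_{N_i\cap\omega_1}$ with $S_p=h(S_p)\subset S_q^{\downarrow}$, not $S_q=S_p$; and the cardinality of $\mathcal P$ is $2^{\aleph_1}$ as stated, which CH alone does not reduce to $\aleph_2$.
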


The proofs are very similar with the same underlying idea in that we
replace elementary chains from the original proofs with
 elementary matrices. The usage of elementary
matrices is the device to make the poset satisfy the
$\aleph_2$-p.i.c. The proof that the modified poset is proper and
$S$-preserving relies on the fact that CH guarantees that
the key combinatorics take
place within $H(\aleph_1)$ and so, by Lemma \ref{homega1}
 no new arguments or constructions
are required.
Since it is newer, we sketch the
proof of Lemma \ref{ppipic} and leave the proof of Lemma \ref{p22pic}
to the interested reader.  In actual fact, this method isn't really
needed for the consistency of $\mathbf{P}_{22}$ because the 
needed poset can be chosen to have cardinality $\mathfrak c$.
The reason this is not true for $\mathbf{PPI}^+$ is that we must
utilize the construction of the maximal filter of $S$-sequentially closed sets
which may have cardinality $2^{\omega_1}$.
We simply indicate the modifications needed
to the proof of Lemma \ref{ppithm}.

\begin{proof}
Let $\dot X$ be the $S$-name as formulated in the Lemma. 
By Lemma \ref{itsomega1}, we can pass to a subspace and assume 
that either $\dot X$ is separable or that the set
$\omega_1\times\{0\}$ is   a subset and is
forced
to not have a complete accumulation point. Since we are assuming CH,
we can, in either case, pass to an $S$-name of
cardinality $\aleph_1$ for a  subspace that is still sequentially
compact and not compact.  With this space having cardinality
$\aleph_1$ it is clear that also in the separable case, we can assume
that $\omega_1\times \{0\}$ is a subspace with no complete accumulation point. 
We can now assume that the base set for $\dot X$ is 
$\omega_1\times \omega_1$ (i.e. any set that is a
subset of $H(\aleph_1)$.
The entire topology $\dot \tau$ for $\dot X$ can be coded as a subset
of $S\times \omega\times \omega_1^4$ where
$(s,m,\alpha,\beta,\gamma,\delta)\in \dot \tau$ codes the fact
that $s$ forces that $(\gamma,\delta)$ is in $\dot U(~(\alpha,\beta)~,
m)$. The family $\bigcup \{ Y_\alpha : \alpha\in \omega_1\}$ and
  $\mathbf{WF}$ 
as defined in \S 3.1  are already subsets of $H(\aleph_1)$. 
We also fix a well-order $\prec_{\omega_1}$ of $H(\aleph_1)$.

Finally, with no changes, the family $\mathcal A$ as defined in 
 Definition \ref{needA} is a subset of $H(\aleph_2)$. This family
 $\mathcal A$ was the key parameter in defining our poset $\mathcal
 P$. Lemma \ref{Melement} holds for any $M\prec
 (H(\aleph_2),\tau, \mathcal A)$ (meaning $\tau\in M$ and $\mathcal A$
 is a new term in the language).
 The choice of the sequence $\{ y^M(s) : s\in S_{M \cap \omega_1}\}$
 from Lemma \ref{Melement} will be the $\prec_{\omega_1}$-minimal 
 such sequence.

\begin{claim}
Consider any set $\mathcal N$
 of pairwise isomorphic countable elementary submodels of 
 $(H(\aleph_2), \dot \tau, \mathcal A)$; i.e.
  $\overline{N} = \overline{N'}$ for $N,N'\in \mathcal N$.  Let $\delta = N\cap
  \omega_1$ for any $N\in \mathcal N$.\label{claimsame}
  Let $N_1, N_2$ be elements of $\mathcal N$.
  We then have that the two sequences
    $\langle y^{N_1}(s) : s \in S_\delta\rangle$ and 
    $\langle y^{N_2}(s) : s \in S_\delta\rangle$
    are the same.
 \end{claim}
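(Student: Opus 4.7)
The plan is to show that the defining property of $y^M$ from Lemma \ref{Melement}, together with the $\prec_{\omega_1}$-minimality stipulation, is genuinely a condition that depends only on the common transitive collapse $\overline{N_1}=\overline{N_2}$, and hence produces the same sequence for $N_1$ and $N_2$. Let $h=h_{N_1,N_2}:N_1\to N_2$ be the canonical isomorphism supplied by Lemma \ref{homega1}.

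First I would note that $S_\delta$, $Y$, $\mathbf{WF}$, and every countable $B\subset Y^n$ live in $H(\aleph_1)$, so any candidate sequence $\vec y=\langle y(s):s\in S_\delta\rangle$ is itself an element of $H(\aleph_1)$, on which $\prec_{\omega_1}$ is a well-ordering. Thus the ``$\prec_{\omega_1}$-minimal sequence in $H(\aleph_1)$ satisfying the conclusion of Lemma \ref{Melement} for $M$'' is well-defined as soon as one specifies the set of admissible sequences.

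Next I would verify that the admissibility condition coincides for $M=N_1$ and $M=N_2$. The map $h$ puts $\mathcal A\cap N_1$ in bijection with $\mathcal A\cap N_2$ by sending each triple $(\bar s,\langle s_i:i<n\rangle,\dot A)\in\mathcal A\cap N_1$ to $(\bar s,\langle s_i:i<n\rangle,h(\dot A))\in\mathcal A\cap N_2$; note that $\bar s$ and each $s_i$ sit in $H(\aleph_1)\cap N_1$ and are therefore fixed by $h$, per Lemma \ref{homega1}. Each candidate $\dot A$ is (up to canonical representation) a subset of $S\times Y^n\subset H(\aleph_1)$, so Lemma \ref{homega1} further yields $\dot A\cap N_1=h(\dot A)\cap N_2$. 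From this, for any $s\in S\cap N_1$ and any countable $B\subset Y^n\cap N_1$ — both fixed by $h$ — the forcing statement $s\Vdash B\subset\dot A$ is a first-order assertion in $H(\aleph_2)$ whose only non-fixed parameter is $\dot A$; by elementarity of $N_1,N_2$ in $H(\aleph_2)$ together with the isomorphism $h$, one obtains
\[
  s\Vdash B\subset\dot A \quad\Longleftrightarrow\quad s\Vdash B\subset h(\dot A).
\]
The membership condition $\vec y\in B^{(\delta+1)}$ involves only objects in $H(\aleph_1)$ and so is absolute.

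Putting these observations together, a sequence $\vec y\in H(\aleph_1)$ is an admissible choice for $y^{N_1}$ if and only if it is an admissible choice for $y^{N_2}$, so the $\prec_{\omega_1}$-minimal admissible sequence is the same in both cases, giving $\langle y^{N_1}(s):s\in S_\delta\rangle=\langle y^{N_2}(s):s\in S_\delta\rangle$. I expect the only delicate step to be the verification that the forcing relation ``$s\Vdash B\subset\dot A$'' transfers across the isomorphism $h$; this is not automatic because $\dot A$ need not lie in $H(\aleph_1)$, but it follows from the fact that the $S$-forcing relation is definable in $H(\aleph_2)$ and that $s$ and $B$ — the parameters that actually control the statement — are fixed by $h$.
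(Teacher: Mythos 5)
Your argument is correct and follows essentially the same route as the paper's: both transfer the defining property from Lemma \ref{Melement} across $h_{N_1,N_2}$ via Lemma \ref{homega1} (the triples in $\mathcal{A}\cap N_1$ map to those in $\mathcal{A}\cap N_2$ with the $H(\aleph_1)$ coordinates fixed and $\dot A\cap N_1 = h_{N_1,N_2}(\dot A)\cap N_2$, so the same witnessing sets $B$ work), and then conclude by $\prec_{\omega_1}$-minimality. The one slip --- the relevant $s$ ranges over $S_\delta$, which is disjoint from $N_1$, not over $S\cap N_1$ --- is harmless: Lemma \ref{basiclemma} reduces $s\Vdash B\subset\dot A$ to the same statement about $s\restriction\gamma$ for some $\gamma\in N_1\cap\omega_1$, which is precisely the definability/fixed-parameter point your closing paragraph identifies as the delicate step.
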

 
 \bgroup
 \def\proofname{Proof of Claim \ref{claimsame}:}
 \begin{proof}
To prove the claim,  let $(\bar s,\{s_i : i < n \}, \dot A)$ be any member of
   $\mathcal A\cap N_1$ and assume that $\bar s < s\in S_\delta$.
    Choose $B\subset Y^n \cap N_1$  such that $s\Vdash
     B\subset \dot A$ and 
      $s \Vdash \langle y^{N_1}(s\oplus s_i)  : i < n\rangle \in 
       B^{(\delta+1)}$.  By Lemma \ref{homega1}, 
   $h_{N_1,N_2}((\bar s, \{ s_i : i< n\}, \dot A))$ is in $\mathcal A
   \cap N_2$. Since $\dot A\subset H(\aleph_1)$, we also have
   by Lemma \ref{homega1}, that $h_{N_1,N_2}(\dot A)\cap N_2$
   is equal to $\dot A \cap N_1$. Therefore, we have
   that $s$ also forces that $B$ is a subset of 
    $h_{N_1,N_2}(\dot A)$. Well this shows that
      $\langle y^{N_1} (s\oplus s_i) : i < n \rangle$ satisfies
   this particular  requirement of
      $\langle y^{N_2} (s\oplus s_i) : i < n \rangle$
   with respect to $h_{N_1,N_2}((\bar s, \{ s_i : i< n\}, \dot A))$.  
   Since $h_{N_1,N_2}$ is an isomorphism, this shows
   that $\langle y^{N_1}(s) : s\in S_\delta\rangle$ works as a 
   choice for 
 $\langle y^{N_2}(s) : s\in S_\delta\rangle$, and so, indeed,
 they are the same.
 \end{proof}
 
 \egroup

 \bigskip

 A condition $p\in \mathcal P$ consists of $([{\mathcal N}_p], S_p, m_p)$
 where $[\mathcal N_p]$ is a 
   elementary matrix of
submodels of 
 $(H(\aleph_2), \prec_{\omega_1},\dot \tau, \mathcal A)$. 
 We let    
 $\delta_p = N \cap \omega_1$
for any maximal $N\in \mathcal N_p$. We require that 
$m_p$ is a positive integer and
 $S_p$ is a finite subset of $S_{\delta_p}$. 
  
  For each $s\in S_p$ and each 
  non-maximal $N\in [{\mathcal N}_p] $
   we define an $S$-name
   $\dot W_p(s\restriction N)$ of a neighborhood of $e(y^N(s\restriction N))$.
   It is defined as the name of the
 intersection of all sets of the form $\dot U(s'\restriction N',m_p)$ 
   where $s'\in S_p$, non-maximal $N'\in [\mathcal M_p] $, and
$s\restriction N\subset s'\restriction N'$
    and $e(
    y^N(s\restriction N)) \in \dot U(s'\restriction N')$. We adopt
   the convention that $\dot W_p(s\cap N)$ is all of $X$ if
 $s\cap N\notin S_p^\downarrow$.
  \medskip
  
  The definition of $p< q$ is that 
 each  $N\in [\mathcal N_q]$ is a member of $[ \mathcal N_p]$,
  $m_q\leq m_p$,  
   $S_q \subset S_p^\downarrow$ 
and   for
   each $s'\in S_p$ 
   and 
  $s\in S_q$,  
   we have that $s'$ forces that
   $e( y^N(s\restriction N))\in \dot W_q(s\restriction N')$
   whenever $N\in [\mathcal N_p] $ and $N\notin  [ \mathcal N_q]$
and   $N'$ is a minimal member of 
$[\mathcal N_q]\setminus N $, which is itself not a maximal member
of $[\mathcal N_q]$.
Again we note 
that we make no requirements on sets of
the form $\dot U(s, m_q)$ for $s\in S_q$.
 
 Because of Claim \ref{claimsame}, the proof that $\mathcal P$ is proper
 and $S$-preserving proceeds exactly as in the proof of 
 Theorem \ref{ppithm}.  
 
 \begin{claim}
  $\mathcal P$ satisfies\label{itspic} 
 the $\aleph_2$-p.i.c. 
 \end{claim}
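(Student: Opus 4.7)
The plan is to combine the matrices $[\mathcal N_p]$ and $[h(\mathcal N_p)]$ into a single larger matrix topped by the new row $\{N_i\cap H(\aleph_2),\,N_j\cap H(\aleph_2)\}$, and then to verify clauses~(1) and~(2) of Definition~\ref{3.2} by adapting the proof of Theorem~\ref{ppithm}. To set things up I first observe that since ordinals are transitive and $h$ is an $\in$-isomorphism, $h$ restricted to $N_i\cap\omega_1$ is the identity, so $\delta := N_i\cap\omega_1 = N_j\cap\omega_1$. Writing $p = ([\mathcal N_p], S_p, m_p)$ and $h(p) = ([h(\mathcal N_p)], h(S_p), m_p)$, I enumerate $[\mathcal N_p] = \{\mathcal N_p^1,\ldots,\mathcal N_p^k\}$ and define $[\mathcal N_q]$ to consist of the combined rows $\mathcal N_p^l \cup h(\mathcal N_p^l)$ for $1\leq l\leq k$, together with the top row $\{N_i\cap H(\aleph_2),\,N_j\cap H(\aleph_2)\}$. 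Each combined row is a valid matrix row because $h$ is an isomorphism, so $\overline{N} = \overline{h(N)}$ for every $N$; the $\in$-chain property of $[\mathcal N_q]$ is inherited from $[\mathcal N_p]$ together with its $h$-image. For $S_q$, I extend each element of $S_p\cup h(S_p)$ up to level $\delta$, using Lemma~\ref{forproper} and the coherence of $S$ to arrange that the extensions simultaneously respect the neighborhood constraints coming from $\dot W_p$ and $\dot W_{h(p)}$; set $m_q = m_p$ and $q = ([\mathcal N_q], S_q, m_q)$.

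To verify genericity of $q$ over $N_i$ (and symmetrically over $N_j$) I would run the argument from the proof of Theorem~\ref{ppithm} with $N_i\cap H(\aleph_2)$ (the top-row submodel inside $N_i$) playing the role of the single submodel $M$ there. The new ingredient is that, by CH and Lemma~\ref{homega1}, every element of $\mathcal A\cap N_i$ is determined by parameters lying in $H(\aleph_1)\cap N_i$, and $h$ is the identity on $H(\aleph_1)\cap N_i$; in particular, Claim~\ref{claimsame} yields that $y^{N_i\cap H(\aleph_2)}$ and $y^{N_j\cap H(\aleph_2)}$ are the \emph{same} $S$-sequence indexed by $S_\delta$. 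Consequently, for any $r\in N_i\cap\mathcal P$, the conditions $r$ and $h(r)$ are indistinguishable to the machinery of the properness proof once the top row $\{N_i\cap H(\aleph_2),N_j\cap H(\aleph_2)\}$ is present in the matrix: any $q''<q'$ produced by that argument to satisfy $q''<r$ will simultaneously satisfy $q''<h(r)$, and conversely, which yields clause~(2).

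The hard part will be verifying that the combined matrix and the tree-extension choices for $S_q$ truly give $q<p$ and $q<h(p)$ in $\mathcal P$, i.e.\ that every $s'\in S_q$ forces the required $\dot W_p$- and $\dot W_{h(p)}$-neighborhood memberships at every non-maximal submodel of both $[\mathcal N_p]$ and $[h(\mathcal N_p)]$. For this I would again invoke the fact that $h$ is the identity on $H(\aleph_1)\cap N_i$, so the neighborhood-name data coded into $p$ and $h(p)$ agree on the common initial segment of $S$ below level $\delta_p$; the coherence of $S$ then produces simultaneous extensions. Once this ordering verification is in place, clauses~(1) and~(2) of the $\aleph_2$-p.i.c. follow directly from the $S$-preservation and properness arguments already developed for $\mathcal P$, applied through the top row of $[\mathcal N_q]$.
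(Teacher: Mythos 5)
Your construction of $q$ --- merging each row $\mathcal N_p^l$ with $h(\mathcal N_p^l)$ and adding the top row $\{N_i\cap H(\aleph_2),\,N_j\cap H(\aleph_2)\}$ --- is exactly the paper's construction, and your appeal to Lemma \ref{homega1} and Claim \ref{claimsame} to see that the two top models carry the very same sequence $\langle y^{\cdot}(s) : s\in S_\delta\rangle$ is the right reason why clause (1) reduces to the properness argument of Theorem \ref{ppithm}. (One small simplification: since elements of $S$ lie in $H(\aleph_1)$ and $h$ fixes $H(\aleph_1)\cap N_i$ pointwise, $h(S_p)=S_p$; any finite $S_q\subset S_\delta$ with $S_p\subset S_q^\downarrow$ then works, and there are no neighborhood constraints to ``arrange'' at this stage, so Lemma \ref{forproper} is not needed there.)

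The genuine gap is in clause (2). It is not true that a $q''<q'$ with $q''<r$ ``will simultaneously satisfy $q''<h(r)$.'' The ordering on $\mathcal P$ requires every model of $[\mathcal N_{h(r)}]$ to be a member of $[\mathcal N_{q''}]$, and $[\mathcal N_{h(r)}]=h([\mathcal N_r])$ consists of the $h$-images of the models of $[\mathcal N_r]$ --- genuinely different countable submodels of $H(\aleph_2)$, living inside $N_j$ --- which in general do not occur in $[\mathcal N_{q''}]$ at all. The coincidence of the $y$-sequences does not make $q''<r$ and $q''<h(r)$ equivalent; what it makes possible is to \emph{extend} a condition below $r$ to one below both, and that is precisely why matrices rather than chains are used as side conditions. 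Accordingly the paper passes from $q''$ to a further condition $\tilde q$ obtained by symmetrizing the matrix: each row $\mathcal N_m$ of $[\mathcal N_{q''}]$ lying strictly below the row containing $N_i\cap H(\aleph_2)$ is enlarged to $\mathcal N_m\cup h(\mathcal N_m\cap N_i)$, so that $[\mathcal N_{h(r)}]$ becomes a submatrix of $[\mathcal N_{\tilde q}]$ and hence $\tilde q<h(r)$ as well as $\tilde q<q''$ (so $\tilde q<q'$, $\tilde q<r$). Claim \ref{claimsame} is then what guarantees that adjoining these isomorphic copies violates no $\dot W$-requirement, i.e.\ that $\tilde q$ really is a condition extending $q''$. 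Without this symmetrization step your argument for clause (2) does not go through.
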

 
 \bgroup
 \def\proofname{Proof of Claim \ref{itspic}:}

\begin{proof}
 Let $\lambda$ be a
 sufficiently large cardinal, fix 
 a well-ordering $\prec $ 
of $ H(\lambda )$, let $i < j < \omega_2 $ be such
that there are
 two countable elementary submodels $ N_i $  and $N_j $
 of $\langle H(\lambda ), \prec , \in  \rangle $
 such that   $\mathcal P $  is in $ N_i  \cap  
N_j $ , $ i \in  N_i $ , $j \in  N_j$,
$N_i  \cap i = N_j \cap  j $,
 and suppose further that we are given $ p \in  \mathcal P\cap 
N_i $ and an 
isomorphism $ h : N_i \rightarrow N_j$
  such that $ h(i) = j $ and $ h $ is the identity on 
$N_i  \cap  N_j $.  We must show that
  there is a $ q \in  P $ such that : 
\begin{enumerate}
\item
$  q < p$ , $q < h(p) $ and $ q $ is both $ N_i $  and $ N_j $  generic , 
\item  if $ r \in  N_i  \cap  P $ and $ q' < q $ 
there is a $ q'' < q' $ so that \\
$q'' < r $  if and only if $  q'' < h(r) $ . 
\end{enumerate}

We first show that since $\mathcal P\in N_i\cap N_j$, we also 
have that $\{\prec_{\omega_1},\dot \tau, \mathcal A\}\in N_i\cap N_j$. 
The reason is that the collection $\{ N : (\exists p\in \mathcal P) 
N\in [\mathcal N_p]\}$ is in $N_i\cap N_j$. 
It follows that $N_i' = (N_i\cap H(\aleph_2), \prec_{\omega_1},
 \dot \tau , \mathcal A )$ is an elementary submodel of
  $(H(\aleph_2), \prec_{\omega_1}, \dot \tau, \mathcal A)$. 
  $N_j'$ defined similarly is as well.  The definition of 
  the  $[\mathcal N_q]$ for $q$ is canonical. Given
  that $[\mathcal N_p] = \{ \mathcal N_1, \mathcal N_2,
  \ldots , \mathcal N_n\}$, we set
   $[\mathcal N_q ] = \{ \mathcal N_1\cup h(\mathcal N_1), 
    \ldots, \mathcal N_n \cup h(\mathcal N_n) ,
       \{ N_i', N_j'\}\} $.   The existence of $h$ ensures
       that $\overline{N_i'} = \overline{N_j'}$.  Since
        $[\mathcal N_p]\in N_i'$ and $
       h([\mathcal N_p])= [\mathcal N_{h(p)}]\in N_j'$
     we have that $[\mathcal N_q]$ is an elementary matrix. 
Choose $S_q \subset S_{N_i\cap \omega_1}$ to be any finite
set such that $S_p= h(S_p) \subset S_q^\downarrow$.  
We already know that $q$ is both $N_i$ and $N_j$ generic
from the arguments in Theorem \ref{ppithm}. 

Finally let $r\in N_i\cap \mathcal P$ and $q'<q$ with $q'\in \mathcal P$.
We may assume, by symmetry, that there is a $q'' <q'$ that is 
also below $r$.  Let $[\mathcal N_{q''} ] $ be listed
as $\{ \mathcal N_1, \ldots , \mathcal N_k \}$ and let
 $1<\ell\leq k$ be chosen so that $ N_i' \in \mathcal N_\ell$.
 For $1\leq m< \ell$, let $\mathcal N_m^i = \mathcal N_m\cap N_i$.
 Of course we have that $N\in N_j'$ for each 
 $1\leq i<\ell$ and each $N\in h(\mathcal N_m^i)$.
 It is the easily verified that 
  $$[\mathcal N_{\tilde q} ] =
  \{ h(\mathcal N_1^i)\cup \mathcal N_1, \ldots ,
  h(\mathcal N_{\ell-1}^i)\cup \mathcal N_{\ell-1},
  \mathcal N_{\ell},\ldots, \mathcal N_k \}$$
  is an elementary matrix,
  and so
  $\tilde q \in \mathcal P$ where
  $( [\mathcal N_{\tilde q}], S_{q''}, m_{q''})$, where
  $$[\mathcal N_{\tilde q} ] =
  \{ h(\mathcal N_1^i)\cup \mathcal N_1, \ldots ,
  h(\mathcal N_{\ell-1}^i)\cup \mathcal N_{\ell-1},
  \mathcal N_{\ell},\ldots, \mathcal N_k \}~~.$$
  
It is immediate that $\tilde q < q''$, and so $\tilde q < q',r$.
We just have to show that $\tilde q$ is also below $h(r)$.
Since $q'' < r$, we have that $[\mathcal N_r]\in N_i$ is a submatrix
of $\{ N_i\cap \mathcal N_1,\ldots, N_i\cap \mathcal N_{\ell-1}\}$,
and so $[\mathcal N_{h(r)}]$ is a submatrix of
$[\mathcal N_{\tilde q}]$.
\end{proof}
 
 \egroup

This completes the proof.
\end{proof}

\begin{definition} The stationary set of ordinals $\lambda \in \omega_2$
with uncountable cofinality is denoted
as $S^2_1$.  
The principle $\diamondsuit(S^2_1)$ is the statement:

There is a family $\{ X_\lambda : \lambda \in S^2_1\}$ such that
\begin{enumerate}
\item for each $\lambda\in S^2_1$, $X_\lambda\subset\lambda$,
\item for each $X\subset\omega_2$, the set  $E_X = \{ \lambda \in 
 S^2_1 : X\cap \lambda = X_\lambda\}$ is stationary.
\end{enumerate}
\end{definition}

\begin{theorem}
Assume $2^{\aleph_0}=\aleph_1$ and $\diamondsuit(S^2_1)$.
There is a proper poset $\mathbb P$ so that in the forcing extension
by $\mathbb P$ there is a coherent Souslin tree $S$ such that,
in the full forcing extension by $\mathbb P * S$, 
the statement $\mathbf{GA}$ holds.
 \end{theorem}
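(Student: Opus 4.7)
The plan is to carry out a countable-support iteration
$\langle \mathbb{P}_\alpha, \dot{\mathbb{Q}}_\alpha : \alpha < \omega_2\rangle$ of
$S$-preserving $\aleph_2$-p.i.c.\ proper posets of cardinality at most
$\aleph_2$, where $S$ is a coherent Souslin tree fixed in the ground model
(available from CH), and set $\mathbb{P} = \mathbb{P}_{\omega_2}$. By
Proposition \ref{3.3}, $\mathbb{P}$ is itself proper and $\aleph_2$-p.i.c.; by
Lemma \ref{chaincondition}, CH is preserved at every intermediate stage, so
the iteration satisfies the $\aleph_2$-cc, and the $S$-preservation of each
factor lifts to the iteration, keeping $S$ Souslin in $V^{\mathbb{P}}$.

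At each stage $\alpha \in S^2_1$ we consult the diamond value $X_\alpha$, which
(after a standard coding) we regard as a $(\mathbb{P}_\alpha * S)$-name.  If
$X_\alpha$ codes an $S$-name of a $P$-ideal $\dot{\mathcal I}$ on $\omega_1$ that
is forced to be a counterexample to $\mathbf{P}_{22}$, we let
$\dot{\mathbb{Q}}_\alpha$ be the poset of Lemma \ref{p22pic}, adjoining an
$S$-name of a stationary $\dot E\subseteq\omega_1$ with
$[\dot E]^{\aleph_0}\subseteq\dot{\mathcal I}$.  If $X_\alpha$ codes an $S$-name
of a sequentially compact non-compact regular space (first countable, when
producing a copy of $\omega_1$), we let $\dot{\mathbb{Q}}_\alpha$ be the poset of
Lemma \ref{ppipic}.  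At cofinally many remaining stages we interleave a
$\sigma$-centered $S$-preserving poset of size $\aleph_1$ adding a dominating
real (trivially $\aleph_2$-p.i.c.); all other $\dot{\mathbb{Q}}_\alpha$ are
trivial.

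In $V[G][g]$, where $g$ is $S$-generic over $V[G]$, we verify $\mathbf{GA}$.
The statement $\mathbf{CW}$ holds because, as noted after its definition,
forcing with any Souslin tree already produces $\mathbf{CW}$; $\mathfrak{b}>\omega_1$
is secured by the interleaved dominating-real forcings.  For $\mathbf{P}_{22}$
and $\mathbf{PPI}^+$ we argue by standard reflection: any $S$-name in $V[G]$
of a $P$-ideal on $\omega_1$, or (after passing to a subspace of cardinality
$\aleph_1$ via Lemma \ref{itsomega1} and the separability trick from the proof
of Proposition \ref{3.9}) any $S$-name of a sequentially compact non-compact
regular space, is determined by a subset of $\omega_2$.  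By the $\aleph_2$-cc
of $\mathbb{P}$ and $\diamondsuit(S^2_1)$, such a name is guessed correctly at
stationarily many $\alpha\in S^2_1$, at each of which the required witness has
been adjoined; since being stationary, being a free sequence, or being a copy
of $\omega_1$ is upward absolute under the $S$-preserving tail, the witness
persists to $V[G][g]$.

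The principal obstacle is coordinating the four iteration-level properties---
properness, $\aleph_2$-p.i.c., $S$-preservation, and CH---along the entire
length-$\omega_2$ iteration, and then verifying that $\aleph_2$-cc reflection
genuinely captures every relevant $S$-name in the final model.  The
$\aleph_2$-p.i.c.\ (as opposed to mere properness) is what licenses this
iteration length without collapsing $\aleph_2$, which is precisely why
Lemmas \ref{p22pic} and \ref{ppipic} employ the matrix-of-models side-condition
technique rather than simple $\in$-chains; CH throughout confines the essential
combinatorics to $H(\aleph_1)$, where the collapsing maps of Lemma
\ref{homega1} act as the identity, so each factor poset goes through without
further adjustment.
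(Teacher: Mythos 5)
Your proposal follows the paper's proof almost exactly: a countable-support iteration of length $\omega_2$ of $S$-preserving, $\aleph_2$-p.i.c.\ proper posets, with Hechler-style factors interleaved to get $\mathfrak b>\omega_1$, the $\diamondsuit(S^2_1)$-sequence used as the bookkeeping device to guess $(\mathbb P_\alpha * S)$-names of counterexamples to $\mathbf{P}_{22}$ and $\mathbf{PPI}^+$, the posets of Lemmas \ref{p22pic} and \ref{ppipic} invoked at the guessed stages, and $\aleph_2$-cc reflection to see that every relevant name in the final model is captured stationarily often. The structural remarks about why p.i.c.\ and CH are needed are also the paper's.

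There is, however, one concrete error: you assert that the coherent Souslin tree $S$ is ``fixed in the ground model (available from CH).'' CH does not imply the existence of a Souslin tree (coherent or otherwise); Souslin's Hypothesis is consistent with CH. Neither does $\diamondsuit(S^2_1)$, which lives on $\omega_2$. This is why the theorem is stated with $S$ appearing only in the extension by $\mathbb P$, and why the paper devotes the first iterand $\dot{\mathbb Q}_0$ to adding a coherent Souslin tree, with all subsequent factors required to be $S$-preserving over $\mathbb P_1$. The fix is exactly that, so the error is localized, but as written your first step fails. Two smaller imprecisions: for $\mathbf{P}_{22}$ you need the normalization the paper performs (replacing a name forced by some condition to be a counterexample with one forced by $1$ to have the relevant property, defaulting to $[\omega_1]^{\le\aleph_0}$) so that the diamond can guess a single canonical name; and in the final model $2^{\aleph_0}=\aleph_2$, so Lemma \ref{itsomega1} only yields a sequentially compact non-compact subspace of size at most $\aleph_2$, not $\aleph_1$ --- the reflection down to a stage $\lambda$ is then done with the $\aleph_1$-sized elementary submodel chain, not by shrinking the space itself to size $\aleph_1$.
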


\begin{proof}
We construct a countable support iteration sequence 
 $\langle \mathbb P_\alpha , \dot{\mathbb Q}_\beta :
  \alpha \leq \omega_2, \beta < \omega_2\rangle$.
  By induction, we assume that $\mathbb P_\alpha$ 
  is proper, has cardinality at most $\aleph_2$, and that 
  $$\Vdash_{\mathbb P_\alpha} \dot{\mathbb Q}_\alpha
  \ \ \mbox{satisfies the}\ \ 
 \aleph_2\mbox{-p.i.c}\ \ .$$
 
Note that by Lemmas \ref{3.3} and \ref{chaincondition} we will
have that, for each $\alpha < \omega_2$,   CH holds
in the forcing extension by $\mathbb P_{\alpha}$.
  We may assume that $\dot {\mathbb Q}_0$, and
  therefore $\mathbb P_1$ is constructed so that
  there is a $\mathbb P_1$-name, $\dot S$ of a coherent Souslin
  tree (henceforth we suppress the dot on the $S$).
  We further demand of our induction that, for $\alpha\geq 1$
  $$\Vdash_{\mathbb P_\alpha} \dot{\mathbb Q}_\alpha
  \ \ \mbox{is}\ \ S\mbox{-preserving}~.$$
 
For each ordinal $0<\alpha \in \omega_2\setminus S^2_1$, we 
let $\dot{\mathbb Q}_\alpha$ denote the $\mathbb P_\alpha$-name
of the standard Hechler poset for adding a dominating real. This ensures
that $\mathfrak b = \omega_2$ in the forcing extension by $\mathbb P_{\omega_2}$. 

For the rest of the construction, fix any function $h$ from $\omega_2$
onto $H(\aleph_2)$.
  Also let $\{ X_\lambda : \lambda \in S^2_1\}$ be 
a $\diamondsuit(S^2_1)$-sequence.

Now consider $\lambda\in S^2_1$ and let
 $x_\lambda = h[ X_\lambda]$.  We define $\dot{\mathbb Q}_\lambda$ according to
 cases:
 \begin{enumerate}
 \item if $x_\lambda$ is the $\mathbb P_\lambda*S$-name of a P-ideal on $\omega_1$
 such that $$1\Vdash [E]^{\aleph_0}\cap x_\lambda \ \ \mbox{is not empty for all stationary sets \ } E\subset\omega_1$$
 then $\dot{\mathbb Q}_\lambda$ is the $\mathbb P_\lambda$-name of the poset
 from Theorem \ref{p22pic},
 \item if $x_\lambda$ is the $\mathbb P_\lambda*S$-name of a
 subset of $\lambda\times\lambda\times\lambda  $ so that 
 if we define, for $\xi,\eta\in \lambda$,
  $\dot U(\xi,\eta)  $ to be the $\mathbb P_\lambda*S$-name of
  the subset of $\lambda$   such that $$\{ (\xi,\eta)\}\times
  \dot U(\xi,\eta) = x_\lambda \cap (\{(\xi,\eta)\}\times \lambda )\ \ ,$$
  i.e.  for $( (p,s), (\xi,\eta,\gamma) )$ in the set $x_\lambda$, 
      $( (p,s), \gamma )$ is in the name $\dot U(\xi,\eta)$,
  and $\mathbb P_\lambda* S$
   forces that the family $\{ \dot U(\xi,\eta) : \eta\in \lambda\}$ is a
  local base for $\xi$ in a 
 sequentially
 compact  regular topology on $\lambda$, and that no finite subset of
  $\{ \dot U(\xi,\eta) : \xi,\eta\in \lambda\}$ covers $\lambda$,
  then $\dot {\mathbb Q}_\lambda$ is the $\mathbb P_\lambda$-name of
  the poset from Theorem \ref{ppipic}.
      \item in all other cases, $\dot {\mathbb Q}_\lambda$ is the
      $\mathbb P_\lambda$-name of the
       Cohen poset $2^{<\omega}$.
 \end{enumerate}
 
 Assume that $\dot{\mathcal I}$ is a $\mathbb P_{\omega_2}*S$-name
 of a P-ideal on $\omega_1$ satisfying that there is some
  $(p_0,s_0)\in \mathbb P_{\omega_2}*S$ forcing that
  $[E]^{\aleph_0}\cap \dot{\mathcal I}$ is not empty for all stationary
  sets $E\subset \omega_1$.
The ideal of all countable subsets of $\omega_1$ is also such an
ideal, so we can find a $\mathbb P_{\omega_2}*S$-name
 $\dot{\mathcal J}$ such that $(p,s) \Vdash \dot{\mathcal J} 
  = \dot{\mathcal I}$, and $1$ forces that 
  $[E]^{\aleph_0}\cap \dot{\mathcal I}$ is not empty for all stationary
  sets $E\subset \omega_1$.   
  Let $X\subset \omega_2 $
   be chosen to be the set of all   $\xi\in \omega_2$
   with the property that 
  there is a $\mu<\omega_2$ such that
  $h(\xi)$ is a  $\mathbb P_\mu*S$-name  
  with 
  $1\Vdash h(\xi)\in \dot{\mathcal J}$.  
  There is  a cub $C\subset\omega_2$ such that for each 
  $\mu<\mu' \in C$:
  \begin{enumerate}
  \item   
  the collection $\{ h(\xi) : \xi \in X\cap \mu\}$ 
  is a collection of $\mathbb P_{\mu'}*S$-names, 
  \item for each countable subset
 $\{\xi_n : n\in \omega\}$  of   $X\cap \mu  $ there
 is a $\xi<\mu'$ such that 1 forces that $h(\xi_n)$
  is almost contained $h(\xi)$ for
  each $n$,
  \item  
 every $\mathbb P_\mu*S$-name 
  that is forced by 1 to be a member of $\dot{\mathcal J}$ 
  is equivalent to a name in $\{ h(\xi) : \xi \in X\cap \mu'\}$.
  \end{enumerate}

  Therefore, there is a $\lambda\in E_X\cap C$ such
  that $X_\lambda = X\cap \lambda$. We may of course
  assume that $p_0\in \mathbb P_\lambda$.   
  Routine checking now shows that $x_\lambda$ satisfies
   clause (1) in the definition of $\dot{\mathbb Q}_\lambda$.
   It follows that $\mathbb P_{\omega_2}*S$ is a model
   of $\mathbf{P}_{22}$.
   
   Now suppose that we have a $\mathbb P_{\omega_2}*S$-name
   of a sequentially compact non-compact space. We note
   that $\mathbb P_{\omega_2}*S$ forces that $2^{\aleph_0} = \aleph_2$.
   Therefore, by Lemma \ref{itsomega1}, we can pass to a name of a  
   sequentially
   compact non-compact subspace which has cardinality 
   at most $\omega_2$. In fact, we can assume this space
   has cardinality exactly $\omega_2$ by taking the free union
   with the Cantor space.  
Let $\dot Z$ denote
    the $\mathbb P_{\omega_2}*S$-name of this space.
   Again, by Lemma \ref{itsomega1}
   we can assume that each point of the space has a 
   separable neighborhood. This means that, with re-indexing,
we can  assume
   that the base set for the space is the ordinal
    $\omega_2$ and 
  $\{ \dot U(\xi,\eta) : \xi ,\eta\in \omega_2\}$ is
    the list of $\mathbb P_{\omega_2}*S$-names of the neighborhood
    bases of the points, and that no finite subcollection covers.
   We  define $X$ to be the set of all those
    $\alpha\in \omega_2$ such that $h(\alpha)$ is 
    a tuple of the form  $( (p,s) , (\xi,\eta,\gamma) )$, 
i.e.  a $\mathbb P_{\omega_2}*S$-name of a member of
 $\omega_2\times\omega_2\times \omega_2$,   
     where $(p,s) \Vdash \gamma \in \dot U(\xi,\eta)$.
     
     We again want to choose a $\lambda$ in $E_X \cap C$
     for some special cub set $C$ and in this case it is much
     simpler to make use of uncountable elementary submodels. 
     Let $\kappa$ be any regular cardinal greater than $2^{\omega_2}$,
     and let $\{ M_\alpha : \alpha \in \omega_2\}$ be chosen so that,
     for each $\alpha\in \omega_2$:
     \begin{enumerate}
     \item $X, h$ and $\mathbb P_{\omega_2}*S$ are in $ M_\alpha$
     \item $\omega_1\subset M_\alpha$ and $M_\alpha$ has cardinality $\aleph_1$,
     \item for each $\beta<\alpha$, every countable subset of $M_\beta$
      is an element of $M_\alpha$,
      \item $M_\alpha$ is an elementary submodel of $H(\kappa)$,
      \item if $\alpha$ is a limit ordinal, then $M_\alpha = \bigcup\{
       M_\beta : \beta < \alpha\}$.
     \end{enumerate}
     
     Items (2) and (4) guarantee that $M_\alpha\cap \omega_2$ is 
     an initial segment of $\omega_2$ -- hence an ordinal.
The chain $\{ M_\alpha : \alpha \in \omega_2\}$ is a continuous chain
because of item (5),  and so $C = \{ M_\alpha \cap \omega_2 : \alpha
\in \omega_2\}$ is a closed and unbounded subset of $\omega_2$.
Now we choose $\lambda \in E_X\cap C$, we can also choose 
$\lambda$ so that it is $M_\lambda$ with $M_\lambda\cap \omega_2 $
being $\lambda$. 
  Using items (1), (3) and (4) and the fact that $\lambda \in S^2_1$,
   it is now easy to show that 
  $x_\lambda$ will satisfy the requirement (2) in the construction
  of $\dot{\mathbb Q}_\lambda$. It then follows,
  as in the proof of Lemma \ref{3.9},
   that
  $\mathbb P_{\lambda+1}*S$ will force the existence of the necessary
   $\omega_1$-sequence showing that $\dot Z$ is not a counterexample
   to $\mathbf{PPI}^+$.
\end{proof}


\def\cprime{$'$}


\begin{thebibliography}{10}

\bibitem{ArhFran}
A.~V. Arhangel{\cprime}ski{\u\i} and S.~P. Franklin.
\newblock Ordinal invariants for topological spaces.
\newblock {\em Michigan Math. J. 15 (1968), 313-320; addendum, ibid.}, 15:506,
  1968.

\bibitem{Balogh1983}
Z.~T. Balogh.
\newblock Locally nice spaces under {M}artin's axiom.
\newblock {\em Comment. Math. Univ. Carolin.}, 24(1):63--87, 1983.

\bibitem{Dow1992}
A. Dow.
\newblock On the consistency of the {M}oore-{M}r\'owka solution.
\newblock {\em Topology Appl.}, 44(1--3):125--141, 1992.

\bibitem{EisworthNyikos}
T. Eisworth and P.~J. Nyikos.
\newblock Antidiamond principles and topological applications.
\newblock {\em Trans. Amer. Math. Soc.}, 361:5695--5719, 2009.

\bibitem{FTT}
A.~J. Fischer, F.~D. Tall, and S.~B. Todorcevic.
\newblock Forcing with a coherent {S}ouslin tree and locally countable
  subspaces of countably tight compact spaces.
\newblock {\em Topology Appl.\/} to appear.

\bibitem{Fleissner1974}
W.~G. Fleissner.
\newblock Normal {M}oore spaces in the constructible universe.
\newblock {\em Proc. Amer. Math. Soc.}, 46:294--298, 1974.

\bibitem{Fremlin1988}
D.H. Fremlin.
\newblock Perfect pre-images of $\omega_1$ and the $\mathrm{PFA}$.
\newblock {\em Topology Appl.}, 125(2):263--278, 1988.

\bibitem{Larson}
P.~Larson and F.~D. Tall.
\newblock Locally compact perfectly normal spaces may all be paracompact.
\newblock {\em Fund. Math.}, 210:285--300, 2010.

\bibitem{Larson1999}
P. Larson.
\newblock An {$\mathbb{S}\sb {\rm max}$} variation for one {S}ouslin tree.
\newblock {\em J. Symbolic Logic}, 64(1):81--98, 1999.

\bibitem{Tadatoshi}
T.  Miyamoto.
\newblock {$\omega_1$}-{S}ouslin trees under countable support iterations.
\newblock {\em Fund. Math.}, 142(3):257--261, 1993.


\bibitem{Nyikos1983}
P.~J. Nyikos.
\newblock Set-theoretic topology of manifolds.
\newblock In {\em General topology and its relations to modern analysis and
  algebra, V (Prague, 1981)}, volume~3 of {\em Sigma Ser. Pure Math.}, pages
  513--526. Heldermann, 1983.

\bibitem{Nyikos1993}
P.~J. Nyikos.
\newblock Mary {E}llen {R}udin's contributions to the theory of nonmetrizable
  manifolds.
\newblock In {\em The work of {M}ary {E}llen {R}udin ({M}adison, {WI}, 1991)},
  volume 705 of {\em Ann. New York Acad. Sci.}, pages 92--113. New York Acad.
  Sci., New York, 1993.

\bibitem{Nyikos2002}
P.~J. Nyikos.
\newblock Complete normality and metrization theory of manifolds. {I}n
  \emph{Proceedings of the {J}an\'os {B}olyai {M}athematical {S}ociety 8th
  {I}nternational {T}opology {C}onference ({G}yula, 1998)}.
\newblock {\em Topology Appl.}, 123:181--192, 2002.


\bibitem{Nyikos2003}
P.~J. Nyikos.
\newblock Applications of some strong set-theoretic axioms to locally compact
  {$T\sb 5$} and hereditarily scw{H} spaces.
\newblock {\em Fund. Math.}, 176(1):25--45, 2003.
 

\bibitem{Nyikos2004a}
P.~J. Nyikos.
\newblock Correction to: ``{C}omplete normality and metrizability theory of
  manifolds'' [{T}opology {A}ppl. 123 (2002) no. 1 181--192; 1921659].
\newblock {\em Topology Appl.}, 138(1--3):325--327, 2004.

\bibitem{Nyikos2004}
P.~J. Nyikos.
\newblock Crowding of functions, para-saturation of ideals, and topological
  applications.
\newblock In {\em Topology Proc.}, volume~28, pages 241--266. Spring {T}opology
  and {D}ynamical {S}ystems {C}onference, 2004.


\bibitem{Rudin1979}
M.~E. Rudin.
\newblock The undecidability of the existence of a perfectly normal
  nonmetrizable manifold.
\newblock {\em Houston J. Math.}, 5(1):249--252, 1979.

\bibitem{RudinZenor}
M.~E. Rudin and P.~Zenor.
\newblock A perfectly normal nonmetrizable manifold.
\newblock {\em Houston J. Math.}, 2(1):129--134, 1976.

\bibitem{proper}
S.  Shelah.
\newblock {\em Proper forcing}, volume 940 of {\em Lecture Notes in
  Mathematics}.
\newblock Springer-Verlag, Berlin-New York, 1982.

\bibitem{Szentmiklossy1978}
Z.~Szentmikl\'ossy.
\newblock {$S$}-{S}paces and {$L$}-spaces under {M}artin's {A}xiom.
\newblock {\em Coll. Math. Soc. Jan\'os Bolyai}, 23:1139--1145, North--Holland,
  Amsterdam, 1980, 1978.

\bibitem{Tall1977}
F.~D. Tall.
\newblock Set-theoretic consistency results and topological theorems concerning
  the normal {M}oore space conjecture and related problems. {D}octoral
  dissertation, {U}niversity of {W}isconsin ({M}adison), 1969;.
\newblock {\em Dissertationes Math. (Rozprawy Mat.)}, 148:53, 1977.

\bibitem{Tall2005}
F.~D. Tall.
\newblock Problems arising from {Z}. {T}. {B}alogh's ``{L}ocally nice spaces
  under {M}artin's axiom''.
\newblock {\em Topology Appl.}, 151(1-3):215--225, 2005.

\bibitem{Todorcevic}
S.  Todorcevic.
\newblock Forcing with a coherent {S}ouslin tree.
\newblock Canad. J. Math., to appear.

\bibitem{matrices}
S.  Todor{\v{c}}evi{\'c}.
\newblock Directed sets and cofinal types.
\newblock {\em Trans. Amer. Math. Soc.}, 290(2):711--723, 1985.

\bibitem{ppit}
S.  Todor{\v{c}}evi{\'c}.
\newblock {\em Partition problems in topology}, volume~84 of {\em Contemporary
  Mathematics}.
\newblock American Mathematical Society, Providence, RI, 1989.

\end{thebibliography}
\end{document}